\theoremstyle{plain}
\newtheorem*{thm*}{Theorem}
\newtheorem{thm}{Theorem}
\Crefname{thm}{Theorem}{Theorems}
\newtheorem*{lem*}{Lemma}
\newtheorem{lem}[thm]{Lemma}
\Crefname{lem}{Lemma}{Lemmas}
\newtheorem*{claim*}{Claim}
\newtheorem{claim}[thm]{Claim}
\crefname{claim}{Claim}{Claims}
\Crefname{claim}{Claim}{Claims}
\newtheorem{prop}[thm]{Proposition}
\Crefname{prop}{Proposition}{Propositions}
\newtheorem{cor}[thm]{Corollary}
\Crefname{cor}{Corollary}{Corollaries}
\newtheorem{conj}[thm]{Conjecture}
\Crefname{conj}{Conjecture}{Conjectures}
\newtheorem{qn}[thm]{Question}
\Crefname{qn}{Question}{Questions}
\newtheorem{obs}[thm]{Observation}
\Crefname{obs}{Observation}{Observations}
\theoremstyle{definition}
\Crefname{prob}{Problem}{Problems}
\newtheorem{defn}[thm]{Definition}
\Crefname{defn}{Definition}{Definitions}
\theoremstyle{remark}
\renewenvironment{proof}[1][]{\begin{trivlist}
\item[\hspace{\labelsep}{\bf\noindent Proof#1.\/}] }{\qed\end{trivlist}}
\newcommand{\remove}[1]{}
\newcommand{\cfree}{$\{C_3, C_5\}$-free}
\newcommand{\suppose}{Let $G$ be a \cfree{} graph on $n$ vertices
with $\delta(G) > n/5$.}
\newcommand{\supposemax}{Let $G$ be a maximal \cfree{} graph on $n$ vertices
with $\delta(G) > n/5$.}
\newcommand{\wellbehaved}{well-behaved}
\newcommand{\MH}{\mathcal{H}}
\newcommand{\eps}{\varepsilon}
\DeclareMathOperator{\Forb}{Forb}
\DeclareMathOperator{\modulo}{mod}
\begin{document}


\title{The homomorphism threshold of \cfree{} graphs}
\date{\vspace{-5ex}}
\author{
    Shoham Letzter\thanks{
        Department of Pure Mathematics and Mathematical Statistics, 	
        University of Cambridge, 
        Wilberforce Road, 
        CB3\;0WB Cambridge, 
        UK;
        e-mail:
        \mbox{\texttt{s.letzter@dpmms.cam.ac.uk}}\,.
    } 
    \and
    Richard Snyder\thanks{
    	Department of Mathematical Sciences, University 
    	of Memphis, Memphis, Tennessee;
    	e-mail: \mbox{\texttt{rsnyder1@memphis.edu}}\,
     }
}

\maketitle
\singlespace

\begin{abstract}

    \setlength{\parskip}{\medskipamount}
    \setlength{\parindent}{0pt}
    \noindent
	We determine the structure of $\{C_3, C_5\}$-free graphs with $n$ vertices
	and minimum degree larger than $n/5$: such graphs are homomorphic to the
	graph obtained from a $(5k - 3)$-cycle by adding all chords of length
	$1(\modulo 5)$, for some $k$. This answers a question of Messuti and
	Schacht.  We deduce that the homomorphism threshold of $\{C_3, C_5\}$-free
	graphs is $1/5$, thus answering a question of Oberkampf and Schacht.

\end{abstract}
\onehalfspace

\section{Introduction} \label{sec:intro}

	We are interested in the structure of graphs of high minimum degree which
	forbid specific subgraphs. For a fixed graph $H$, a graph is said to be
	$H$-\emph{free} if it does not contain $H$ as a subgraph.  Let $\Forb(H)$
	denote the class of $H$-free graphs, and let $\Forb_{n}(H)$ denote the
	class of $n$-vertex graphs in $\Forb(H)$. Furthermore, let $\Forb(H,d)$
	denote the class of $H$-free graphs $G$ with minimum degree at least
	$d|V(G)|$.  Analogous definitions hold if we replace $H$ by some family
	$\mathcal{H}$ of graphs. Finally, we say that a graph $G$ is homomorphic to
	a graph $H$ if there exists a map $f: V(G) \rightarrow V(H)$ such that
	$f(u)f(v)\in E(H)$ whenever $uv \in E(G)$.  For example, $G$ is homomorphic
	to $K_r$ if and only if $\chi(G) \le r$.
    
	A classical result of Andr{\'a}sfai, Erd{\H{o}}s and S{\'o}s
	\cite{andrasfai-erdos-sos} states that if $G$ is a $K_{r+1}$-free graph on
	$n$ vertices with minimum degree $\delta(G) > \frac{3r-4}{3r-1}n$, then $G$
	is $r$-colourable. This result can be viewed as a significant strengthening
	of the following fact, which is a consequence of Tur\'an's theorem: the
	minimum degree of a $K_{r + 1}$-free graph on $n$ vertices is at most $(1 -
	1/r)n$.  Note also here that the chromatic number $\chi(G)$ of $G$ is
	bounded by a constant independent of $n$.  In general, one may ask whether
	or not this behaviour persists when the minimum degree condition is
	weakened. Along these lines, H{\"a}ggkvist \cite{haggkvist} showed that any
	$n$-vertex triangle-free graph of minimum degree greater than $3n/8$ is
	homomorphic to a $5$-cycle, and accordingly has chromatic number at most
	$3$. Note that this is indeed an extension of the
	Andr{\'a}sfai-Erd{\H{o}}s-S{\'o}s result when the minimum degree condition
	is weakened, since a balanced blow-up of a $5$-cycle exhibits the tightness
	of that result.  Jin \cite{jin1} took up the investigation and
	significantly extended the work of H\"aggkvist: he proved that for all
	$1\leq k \leq 9$, any $n$-vertex triangle-free graph with minimum degree
	larger than $\frac{k+1}{3k+2}n$ is homomorphic to the graph $F_{k}^2$,
	which is obtained by adding all chords of length $1(\modulo3)$ to a cycle
	of length $3k - 1$. Observe that $F_k^2$ is triangle-free and
	$3$-colourable for every $k$. The graphs $F_k^2$ are a special case of a
	larger family of graphs, $F_k^\ell$, which we shall discuss shortly.  We
	note that Jin's result \cite{jin1} is best possible, in the sense that such
	a statement does not hold for $k = 10$. Indeed, by taking a suitably chosen
	unbalanced blow-up of the Gr\"otzsch graph (also known as the
	Mycielski graph, see e.g.\ \cite{chen-jin-koh}) one can obtain a
	triangle-free graph on $n$ vertices and minimum degree $\lfloor 10n/29
	\rfloor$ which is not $3$-colourable, so in particular it is not
	homomorphic to $F_k^2$ for any $k$.  Building on this work, Chen, Jin,
	and Koh \cite{chen-jin-koh} showed, in particular, that any $n$-vertex
	$3$-colourable triangle-free graph $G$ with $\delta(G) > n/3$ is
	homomorphic to $F_k^2$, for some $k$. Again, the Gr\"otzsch graph shows
	that the assumption that the graph is $3$-colourable is necessary. 
    
	In general, one may ask for the \emph{smallest} minimum degree condition we
	may impose on an $H$-free graph which guarantees that it has bounded
	chromatic number. To be precise, this prompts us to define the
	\emph{chromatic threshold} $\delta_{\chi}(H)$ of a graph $H$:
    \[ 
		\delta_{\chi}(H) = 
        \inf \{d: \text{there exists $C=C(H,d)$ such that if $G \in \Forb(H,d)$,
		then $\chi(G) \leq C$}\}. 
	\]
	In other words, $\delta_{\chi}(H)$ is the infimum over all $d \in [0, 1]$
	such that every $H$-free graph on $n$ vertices and with minimum degree at
	least $dn$ has bounded chromatic number (independent of $n$).  This
	definition was implicit in the works of Andr{\'a}sfai \cite{andrasfai} and
	Erd{\H{o}}s and Simonovits \cite{erdos-simonovits}, and was first
	explicitly formulated by \L{}uczak and Thomass{\'e} \cite{luczak-thomasse}.
    
	For every $\varepsilon > 0$, Hajnal (appearing in \cite{erdos-simonovits})
	constructed graphs in $\Forb(K_3, 1/3-\varepsilon)$ with arbitrarily large
	chromatic number, thereby proving the bound $\delta_{\chi}(K_3)\geq 1/3$.
	Thomassen \cite{thomassen} thereafter established the matching upper bound,
	showing that $\delta_{\chi}(K_3) = 1/3$.  In fact, Brandt and Thomass{\'e}
	\cite{brandt-thomasse} strengthened this by showing that triangle-free
	graphs of minimum degree larger than $n/3$ have chromatic number at most
	four, answering a question of Erd{\H{o}}s and Simonovits
	\cite{erdos-simonovits}. Extensions of these results were obtained by
	several authors \cite{goddard-lyle, nikiforov}, who showed that
	$\delta_{\chi}(K_r) = \frac{2r-5}{2r-3}$.  Finally, building off of ideas
	of {\L}uczak and Thomass{\'e} \cite{luczak-thomasse} and Lyle \cite{lyle},
	Allen, B\"ottcher, Griffiths, Kohayakawa and Morris \cite{allen-et-al}
	determined the value of $\delta_{\chi}(H)$ for every graph $H$ with
	$\chi(H) >2$.
    
	Note that the results of H\"aggkvist \cite{haggkvist}, Jin \cite{jin1}, and
	Chen, Jin, and Koh \cite{chen-jin-koh} mentioned earlier not only show that
	triangle-free graphs of large enough minimum degree have bounded chromatic
	number, but that they are actually homomorphic to some specific
	$3$-colourable triangle-free graph.  One may ask then, with respect to the
	above discussion, whether we can replace the property of having bounded
	chromatic number with the property of admitting a homomorphism to a graph
	of bounded order with additional properties. This question was posed by
	Thomassen \cite{thomassen} in the specific case of triangle-free graphs,
	and motivated Oberkampf and Schacht \cite{oberkampf-schacht} to introduce
	the \emph{homomorphism threshold} $\delta_{\hom}(H)$ of a graph $H$:
    \begin{align*}
		\delta_{\hom}(H) = \inf\{d: \exists\, C=C(H,d) \text{ s.t. } &\forall\, G \in \Forb(H,d) \\
    	&\exists\, G' \in \Forb_C(H) \text{ s.t.~} G \text{ is homomorphic to } G'\}. 
    \end{align*}
	In words, $\delta_{\hom}(H)$ is the infimum over all $d \in [0,1]$
	such that every $H$-free graph with $n$ vertices and minimum degree at
	least $dn$ is homomorphic to an $H$-free graph of bounded order
	(independent of $n$).  Note that the definition of $\delta_{\hom}(H)$
	extends naturally if we replace $H$ by a family $\mathcal{H}$ of graphs.
    
	\L{}uczak \cite{luczak} proved that $\delta_{\hom}(K_3) \leq 1/3$. Note
	that if $G$ is homomorphic to $G'$, then $\chi(G) \le |V(G')|$.
	Accordingly, we always have $\delta_{\hom}(H) \ge \delta_{\chi}(H)$, and
	so, since $\delta_{\chi}(K_3) = 1/3$, it follows that $\delta_{\hom}(K_3) =
	1/3$. This result was extended by Goddard and Lyle \cite{goddard-lyle} to
	$K_r$-free graphs for $r\geq 4$, and, in particular, we know that
	$\delta_{\hom}(K_r) = \delta_{\chi}(K_r) = \frac{2r-5}{2r-3}$.  Oberkampf
	and Schacht \cite{oberkampf-schacht} gave a new proof of this result
	avoiding the Regularity Lemma (which was used in \L{}uczak's proof), and
	asked for the determination of the homomorphism threshold of the odd cycle,
	$\delta_{\hom}(C_{2\ell-1})$, and $\delta_{\hom}(\{C_3,\ldots ,
	C_{2\ell-1}\})$ for $\ell \ge 3$.  As our first main result, we
	determine the value of the second of these two parameters in the case
	$\ell = 3$.
    \begin{thm} \label{thm:hom-threshold}
        The homomorphism threshold of $\{C_3, C_5\}$ is $1/5$.
    \end{thm}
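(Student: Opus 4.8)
The plan is to read off both inequalities from the structure theorem (that every $\{C_3,C_5\}$-free graph $G$ on $n$ vertices with $\delta(G)>n/5$ is homomorphic to $F_k$, for some $k\ge 2$, where $F_k$ denotes the graph obtained from $C_{5k-3}$ by adding all chords of length $\equiv 1\pmod{5}$). I first record the facts about $F_k$ that do the work: viewing $F_k$ as a circulant on $\mathbb{Z}_{5k-3}$, its connection set is $S=\{x:x\equiv 1\pmod{5}\}$, which has size $k$, so $F_k$ is $k$-regular; and since $\gcd(2k-1,5k-3)=1$ and $5(2k-1)\equiv 1\pmod{5k-3}$, multiplication by $2k-1$ is an isomorphism from $F_k$ to the circular complete graph $K_{(5k-3)/(2k-1)}$. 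In particular $F_k$ is $\{C_3,C_5\}$-free (of odd girth $7$) and $\chi_c(F_k)=\tfrac{5k-3}{2k-1}$, a sequence strictly increasing with limit $5/2$.

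For the upper bound, fix $d>1/5$ and let $G$ be a $\{C_3,C_5\}$-free graph on $n$ vertices with $\delta(G)\ge dn$; as $dn>n/5$, the structure theorem gives a homomorphism $\phi$ from $G$ to $F_k$, and I take $k$ least with this property. Write $V_i=\phi^{-1}(i)$ for $i\in\mathbb{Z}_{5k-3}$. Every $V_i$ is non-empty: otherwise $\phi$ maps $G$ into $F_k$ with a vertex deleted, which is itself homomorphic to $F_{k-1}$ (a short direct argument), contradicting minimality of $k$. Now $N_G(v)\subseteq\bigcup_{s\in S}V_{i+s}$ for every $v\in V_i$, so $\sum_{s\in S}|V_{i+s}|\ge\delta(G)\ge dn$ for every $i$; summing over $i\in\mathbb{Z}_{5k-3}$ and using that each $V_j$ is counted exactly $|S|=k$ times gives $kn\ge(5k-3)dn$, whence $|V(F_k)|=5k-3\le\tfrac{3}{5d-1}$. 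Thus $G$ is homomorphic to one of the finitely many $\{C_3,C_5\}$-free graphs $F_k$ of bounded order, so $\delta_{\hom}(\{C_3,C_5\})\le d$ for every $d>1/5$, i.e.\ $\delta_{\hom}(\{C_3,C_5\})\le 1/5$.

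For the lower bound, note that each $F_k$ is itself $\{C_3,C_5\}$-free with $\delta(F_k)/|V(F_k)|=k/(5k-3)>1/5$, so $F_k\in\Forb(\{C_3,C_5\},d)$ for every $k$ and every $d<1/5$; it therefore suffices to show that for any bound $C$ there is some $k$ such that $F_k$ has no homomorphism to any $\{C_3,C_5\}$-free graph on at most $C$ vertices. For a finite $\{C_3,C_5\}$-free graph $H$, the quantity $\omega_c(H):=\sup\{r:K_r\to H\}$ is attained and is strictly less than $5/2$, since $\omega_c(H)\ge 5/2$ would give $C_5=K_{5/2}\to K_{\omega_c(H)}\to H$ and hence force $H$ to contain $C_3$ or $C_5$. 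As $F_k\cong K_{(5k-3)/(2k-1)}$ with $\tfrac{5k-3}{2k-1}\to 5/2$, for each of the finitely many $\{C_3,C_5\}$-free graphs on at most $C$ vertices we have $F_k\not\to H$ once $k$ is large, as required; hence $\delta_{\hom}(\{C_3,C_5\})\ge 1/5$. (Equivalently, one may use $\delta_{\hom}\ge\delta_\chi$ together with a Hajnal-type construction of $\{C_3,C_5\}$-free graphs with minimum degree $(1/5-o(1))n$ and unbounded chromatic number.) Combining the two bounds gives $\delta_{\hom}(\{C_3,C_5\})=1/5$.

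The substance is entirely in the structure theorem, which occupies the rest of the paper; once it is available the deduction above is short. The two auxiliary facts it uses — that $F_k$ with a vertex removed is homomorphic to $F_{k-1}$, and the standard properties of circular cliques invoked in the lower bound — are routine, and I expect no real difficulty there; the only point requiring attention is that, in the averaging step, minimality of $k$ is exactly what forces all classes $V_i$ to be non-empty, which is what converts the per-vertex degree bound into the global inequality $kn\ge(5k-3)dn$.
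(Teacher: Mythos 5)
Your upper bound is correct and takes a genuinely different (and somewhat leaner) route than the paper. The paper deduces the upper bound from \Cref{cor:hom-F-k-min-deg}, which rests on \Cref{thm:hom-or-Fk} together with part (\ref{itm:F-k-num-neighs}) of \Cref{prop:Fk}: one counts neighbours inside a copy of $F_k$ sitting in $G$. You instead use only the statement of \Cref{thm:hom-to-family} as a black box, take $k$ minimal with $G$ homomorphic to $F_k$, and average the degree condition over the (non-empty) preimage classes; what this buys is independence from the stronger \Cref{thm:hom-or-Fk}, at the cost of one extra lemma. That lemma, that $F_k$ minus a vertex is homomorphic to $F_{k-1}$, is indeed true and short: with $V(F_k)=\{0,\dots,5k-4\}$ and edges exactly the pairs with $|i-j|\equiv 1\pmod 5$, delete the vertex $0$ and map $i\mapsto i-1$ for $1\le i\le 5k-8$ and $i\mapsto i-1-(5k-8)$ for $5k-7\le i\le 5k-4$; one checks directly that the congruence condition on differences is preserved. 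So there is no real gap in this half, though you should write that map out (and dispose of the trivial case $k=1$).

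The lower bound is where you have a genuine gap. Your identification $F_k\cong K_{(5k-3)/(2k-1)}$ is correct, but the argument stands or falls with the assertion that $\omega_c(H)=\sup\{r:K_r\to H\}$ is \emph{attained} for every finite \cfree{} graph $H$: if the supremum could equal $5/2$ without being attained, nothing in your argument rules out $F_k\to H$ for every $k$, which is precisely the situation you must exclude. Thus the attainment claim carries the entire content of the lower bound, and you neither prove it nor give a precise reference; it is a genuine (if known) theorem about circular cliques, not a routine observation. Compare the paper's self-contained argument: by part (\ref{itm:F-k-paths-between-vs}) of \Cref{prop:Fk}, any two distinct vertices of $F_k$ are joined by a path of length $1$, $3$ or $5$, so a non-injective homomorphism of $F_k$ sends such a path to a closed odd walk of length $3$ or $5$, forcing a $C_3$ or $C_5$ in the target; hence $F_k$ maps to no \cfree{} graph on fewer than $5k-3$ vertices, and since $\delta(F_k)/|V(F_k)|=k/(5k-3)>1/5$ the lower bound follows. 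You should either reproduce an argument of this kind or cite the attainment of $\omega_c$ precisely. Finally, your parenthetical fallback is false: $\delta_{\hom}\ge\delta_{\chi}$ is true but useless here, since $\delta_{\chi}(\{C_3,C_5\})=0$ (by Thomassen's theorem, already $C_5$-free graphs with minimum degree $\eps n$ have bounded chromatic number), so no Hajnal-type \cfree{} construction with minimum degree $(1/5-o(1))n$ and unbounded chromatic number exists; the obstruction in this problem is genuinely a homomorphism obstruction, not a chromatic one.
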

	In other words, \Cref{thm:hom-threshold} states that, for every $\eps > 0$,
	if $G$ is a \cfree{} graph on $n$ vertices and minimum degree at least
	$(1/5 + \eps)n$, then $G$ is homomorphic to a \cfree{} graph of order at
	most $C$, where $C$ depends on $\eps$ but not on $n$.  We also establish an
	upper bound on $\delta_{\hom}(C_5)$.  This is a consequence of
	\Cref{thm:hom-threshold}, since $C_5$-free graphs of large minimum degree
	end up being triangle-free as well. In particular, we have the following.
    
    \begin{cor}\label{cor:hom-threshold-C-5}
        The homomorphism threshold of $C_5$ is at most $1/5$. 
    \end{cor}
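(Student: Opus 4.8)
The plan is to reduce \Cref{cor:hom-threshold-C-5} to \Cref{thm:hom-threshold} by showing that a $C_5$-free graph whose minimum degree exceeds $n/5$ by a linear amount contains no triangle at all. Precisely, I would first establish the following claim: \emph{for every $\eps>0$ there is $n_0$ such that every $C_5$-free graph $G$ on $n\ge n_0$ vertices with $\delta(G)\ge(1/5+\eps)n$ is triangle-free.} Granting the claim, \Cref{cor:hom-threshold-C-5} follows quickly: for $n\ge n_0(\eps)$ such a $G$ is \cfree, so by \Cref{thm:hom-threshold} it is homomorphic to a \cfree{} --- hence $C_5$-free --- graph of order bounded in terms of $\eps$ only, while the finitely many graphs on fewer than $n_0(\eps)$ vertices are homomorphic to themselves (and have bounded order); since $\eps>0$ is arbitrary, $\delta_{\hom}(C_5)\le 1/5$.

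To prove the claim, suppose for contradiction that $G$ is $C_5$-free with $\delta(G)\ge(1/5+\eps)n$, $n$ large, and contains a triangle $T=\{x,y,z\}$. I would work with the partition $V(G)=T\sqcup A\sqcup B\sqcup C\sqcup D\sqcup W$, where $A$, $B$, $C$ consist of the vertices whose only neighbour in $T$ is $x$, $y$, $z$ respectively, $D$ consists of the vertices with at least two neighbours in $T$, and $W=V(G)\setminus(N[x]\cup N[y]\cup N[z])$. Chasing $5$-cycles through the triangle should yield: (i) $A,B,C$ are pairwise non-adjacent, there are no edges between $D$ and $A\cup B\cup C$, and $D$ is independent --- so $N(a)\subseteq\{x\}\cup A\cup W$ for $a\in A$ (and symmetrically for $B,C$), while $N(d)\subseteq T\cup W$ for $d\in D$; (ii) since $x$ is adjacent to every vertex of $A$, the graph $G[A]$ contains no $P_4$ (which would close up with $x$ into a $C_5$), hence $G[A]$ is a disjoint union of stars and triangles, so $A$ has a vertex with at most two neighbours in $A$, and likewise for $B$ and $C$; (iii) a vertex of $W$ with a neighbour in $A\cup B\cup C$ has no neighbour in $D$, and one with a neighbour in $A$ has none in $B\cup C$, so if $W_0$ denotes the set of vertices of $W$ with no neighbour in $A\cup B\cup C$ and $W_A,W_B,W_C$ the sets with a neighbour in $A$, $B$, $C$ respectively, then these four sets are pairwise disjoint and $N(d)\cap W\subseteq W_0$ for every $d\in D$; (iv) each $w\in W_0$ has at most one neighbour in $D$, since adjacency of $w$ to distinct $d,d'\in D$ produces a $C_5$ $s\,d\,w\,d'\,s'$ on choosing $s\in N(d)\cap T$ and $s'\in N(d')\cap T$ with $s\ne s'$ (possible because $|N(d')\cap T|\ge 2$).

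The count then runs as follows. By (iii) each $d\in D$ has at least $(1/5+\eps)n-3$ neighbours in $W_0$, and by (iv) each vertex of $W_0$ has at most one neighbour in $D$; double-counting edges between $D$ and $W_0$ gives $|D|\bigl((1/5+\eps)n-3\bigr)\le|W_0|\le n$, hence $|D|\le 4$ once $n$ is large. Since $N(x)=\{y,z\}\sqcup A\sqcup\bigl(N(x)\cap D\bigr)$, this forces $|A|\ge(1/5+\eps)n-6$, and similarly $|B|,|C|\ge(1/5+\eps)n-6$; in particular $A$ is nonempty, so by (ii) there is $a\in A$ with at most two neighbours in $A$, and then (i) and (iii) give $(1/5+\eps)n\le\deg(a)\le 3+|N(a)\cap W|$ with $N(a)\cap W\subseteq W_A$, so $|W_A|\ge(1/5+\eps)n-3$, and likewise $|W_B|,|W_C|\ge(1/5+\eps)n-3$. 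As $A,B,C,W_A,W_B,W_C$ are pairwise disjoint subsets of $V(G)$,
\[
 n\;\ge\;|A|+|B|+|C|+|W_A|+|W_B|+|W_C|\;\ge\;6\bigl((1/5+\eps)n-6\bigr)\;=\;(6/5+6\eps)n-36,
\]
which is false for $n$ large; this contradiction proves the claim.

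I expect the main obstacle to be carrying out the structural analysis (i)--(iv) with enough care, and in particular the key point that the common-neighbourhood set $D$ is \emph{bounded}: this is what makes the private neighbourhoods $A,B,C$ linear in size, which in turn lets each of them push a linear-sized ``shadow'' $W_A$, $W_B$, $W_C$ into $W$ --- the six pairwise disjoint linear-sized sets needed to overshoot $n$. Without the bound on $|D|$ one finds only three or four such sets, covering at most about $4n/5$ vertices, which does not suffice.
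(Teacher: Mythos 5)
Your proposal is correct, and its top-level reduction is exactly the paper's: show that a $C_5$-free graph with minimum degree $(1/5+\eps)n$ is in fact triangle-free, hence \cfree{}, and then invoke \Cref{thm:hom-threshold} to get a homomorphism into a bounded \cfree{} (in particular $C_5$-free) target; your handling of the finitely many graphs on fewer than $n_0(\eps)$ vertices is the standard fix and is fine. Where you genuinely diverge is in the proof of the triangle-freeness step. The paper's \Cref{lem:no-triangles} proves a stronger statement --- triangle-freeness already at minimum degree $n/6+\gamma n$ (with a remark that $n/6+1$ suffices and is tight) --- by a global argument: each vertex is incident with at most $13$ triangular edges (via \Cref{claim:few-common-neighs}, which extracts a half-sized set $X$ each of whose vertices has a neighbour outside $X$), one edge is then deleted from every triangle but one, and the neighbourhoods and second neighbourhoods of the surviving triangle are shown to be pairwise disjoint and to overshoot $n$. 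Your argument is instead local to a single triangle $T$: partition into private neighbourhoods $A,B,C$, the multi-neighbour set $D$, and $W$, chase $5$-cycles to get the adjacency restrictions (i)--(iv) (all of which check out), and use the key observation that $D$ is bounded --- since each $d\in D$ sends almost all its degree into $W_0$ while each vertex of $W_0$ sees at most one vertex of $D$ --- to force $A,B,C$ and their shadows $W_A,W_B,W_C$ to be six pairwise disjoint sets of size about $n/5$ each, a contradiction. Your route is more elementary and self-contained but only works above $1/5$, which is all the corollary needs; the paper's route buys the stronger (and essentially optimal) degree threshold $n/6$ for the triangle-freeness phenomenon, at the cost of the extra combinatorial claim and the edge-deletion trick.
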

    
	In fact, we are able to say much more about the structure of \cfree{}
	graphs with $n$ vertices and minimum degree larger than $n/5$.  First we
	need to define a family of graphs, sometimes known as \emph{generalised
	Andr{\'a}sfai graphs}.  For integers $k \ge 1$ and $\ell \ge 2$, denote
	by $F_k^\ell$ the graph obtained from a $((2\ell-1)(k-1)+2)$-cycle (an
	edge, when $k = 1$) by adding all chords joining vertices at distances
	$j(2\ell-1)+1$ for $j=0, 1, \ldots , k-1$. Alternatively,
	$F_k^{\ell}$ can be defined as the complement of the $(\ell -
	1)$-th power of a cycle of length $(2\ell-1)(k-1)+2$. For $\ell =
	2$, these graphs were considered by Erd\H{o}s \cite{erdos} and Andr\'asfai
	\cite{andrasfai,andrasfai-2}. It is not difficult to check that
	$F_k^\ell$ is $k$-regular, maximal $\{C_3, \ldots , C_{2\ell - 1}\}$-free,
	and $3$-colourable.  For our purposes, $\ell$ will always be $3$ and we
	shall write $F_k$ instead of $F_k^3$ for simplicity. In particular, $F_1$ is
	an edge, $F_2$ is a $C_7$ (a cycle of length $7$) and $F_3$ is the graph
	obtained by adding all diagonals to a $C_{12}$ (by a \emph{diagonal} in an
	even cycle $C_{2\ell}$, $\ell \geq 2$, we mean an edge joining vertices at
	distance $\ell$ along the cycle). This graph is also known as the M{\"o}bius
	ladder on $12$ vertices (see \Cref{fig:mobius}).
 
    \begin{figure}[h]
        \ffigbox
       	{
        	\begin{subfloatrow}
                \ffigbox[.35\textwidth]
                {
                    \includegraphics[scale = .8]{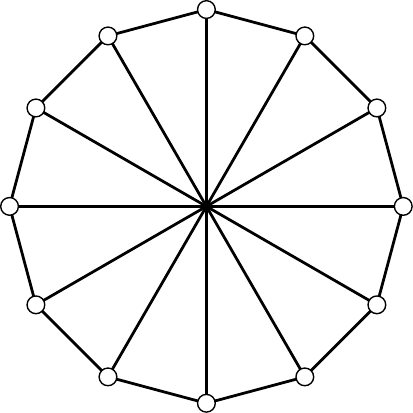}
                }
                {
                    \caption{the M{\"o}bius ladder $F_3$}
                    \label{fig:mobius} 
                }
                \ffigbox[.35\textwidth]
                {
                    \includegraphics [scale = .8]{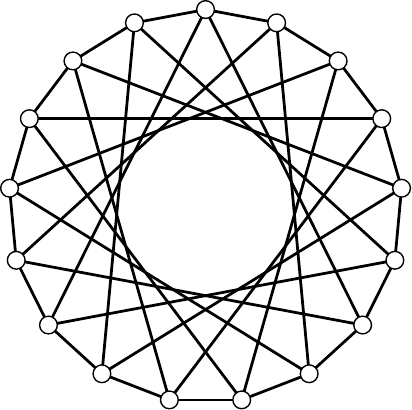}
                }
                {
                    \caption{$F_4$}
                    \label{fig:F4}
                }
            \end{subfloatrow}
        }
        {
            \caption{examples of graphs $F_k$}
            \label{fig:F-k-examples}
       	}
    \end{figure}    

	As our second main result, we determine the structure of \cfree{} graphs on
	$n$ vertices with minimum degree larger than $n/5$, thus answering a
	question of Messuti and Schacht \cite{messuti-schacht}.   
    \begin{thm} \label{thm:hom-to-family}   
        \suppose{}
        Then $G$ is homomorphic to $F_k$, for some $k$.
    \end{thm}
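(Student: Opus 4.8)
The plan is to reduce to edge-maximal (and connected) graphs, extract a rigid cyclic structure from the local geometry, and then read off a homomorphism to the appropriate $F_k$. First I would note that it suffices to prove the statement for maximal \cfree{} graphs: the family of graphs admitting a homomorphism to $F_k$ for some $k$ is closed under taking subgraphs, while replacing $G$ by a maximal \cfree{} graph on the same vertex set preserves the property of being \cfree{} and can only increase degrees (and one may also assume $G$ connected, since each component inherits the hypotheses and $F_k\to F_{k'}$ whenever $k\le k'$). The gain from maximality is that every non-edge $uv$ now ``closes up'': as $G+uv$ contains a triangle or a $C_5$, the vertices $u$ and $v$ have a common neighbour or are joined by a path of length exactly $4$.

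It is convenient to record the target in a workable form. One checks that $F_k$ is isomorphic to the Cayley graph on $\mathbb{Z}_{5k-3}$ with connection set $\{1,6,11,\dots,5k-4\}$; equivalently, identifying $V(F_k)$ with $\{0,1,\dots,5k-4\}$, two vertices $i,j$ are adjacent exactly when $|i-j|\equiv 1\pmod 5$. Hence a homomorphism $G\to F_k$ is nothing but a cyclically indexed partition $V(G)=U_0\cup\dots\cup U_{5k-4}$ into independent sets such that every edge of $G$ runs between some $U_i$ and some $U_j$ with $|i-j|\equiv 1\pmod 5$. The core of the proof is to produce such a partition.

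The main work is local structure. Triangle-freeness makes each neighbourhood independent, and $C_5$-freeness sharply restricts how two neighbourhoods interact — for instance, for an edge $uv$ there is essentially no path of length $2$ between $N(u)$ and $N(v)$, and second neighbourhoods decompose into pieces each dominated by a single vertex, with analogous rigidity propagating outward. Feeding these facts, together with maximality and the minimum-degree bound, through a double-counting argument, I would show that $G$ is \wellbehaved{} in the appropriate sense. Quantitatively: for an edge $uv$ the disjoint sets $N(u)$ and $N(v)$ together cover more than $2n/5$ vertices, and for any further edge $vv'$ the neighbourhoods $N(u)$ and $N(v')$ must overlap in almost all of their vertices; so the ``neighbourhood type'' of a vertex is forced to drift by a single fixed step along each edge and to return to itself around any closed walk. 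Tracking parities with the help of the odd girth being $7$, the only possibility is that the types are arranged around a single cycle of length $m\equiv 2\pmod 5$, two types being adjacent iff their cyclic positions differ by $1$ modulo $5$. Writing $m=5k-3$ yields the partition $U_0,\dots,U_{5k-4}$ demanded above. (Note $\delta(G)>n/5$ does not bound $k$, since every balanced blow-up of $F_k$ satisfies the hypotheses; the stronger bound $\delta(G)>(1/5+\eps)n$ would give $k=O(1/\eps)$.)

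With the cyclic partition established, the map sending every vertex of $U_i$ to $i\in\mathbb{Z}_{5k-3}$ is a homomorphism $G\to F_k$ by the Cayley description, and the non-maximal case follows from the first paragraph. The crux — and the step I expect to be hardest — is the structural claim, in particular the absence of ``branching'': a priori the type structure could be a blow-up of a more complicated \cfree{} graph of large minimum degree rather than a single cyclically-chorded cycle, and excluding this is where $\delta(G)>n/5$, maximality and $C_5$-freeness must be weighed against each other most carefully, and where the threshold value $1/5$ is genuinely used.
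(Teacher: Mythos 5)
The reduction to a maximal \cfree{} graph, the Cayley-graph description of $F_k$, and the final read-off of a homomorphism from a cyclic partition are all fine, but they are the routine parts; the entire substance of the theorem is compressed into the sentence ``I would show that $G$ is \wellbehaved{} in the appropriate sense'', and the quantitative claim you offer in support of it is false. You assert that for an edge $uv$ and any further edge $vv'$ the neighbourhoods $N(u)$ and $N(v')$ must overlap in almost all of their vertices, so that a vertex's ``type'' drifts by a single fixed step along each edge and the types arrange themselves around one cycle. Balanced blow-ups of $F_k$ --- which satisfy every hypothesis of the theorem and are exactly the extremal examples --- refute this: with classes $U_0,\dots,U_{5k-4}$, take $u\in U_0$, $v\in U_1$, $v'\in U_7$; then $N(u)$ is the union of the $k$ classes $U_1,U_6,U_{11},\dots,U_{5k-4}$, while $N(v')$ is the union of $U_1,U_6,U_8,U_{13},\dots$, and the two neighbourhoods share only $U_1\cup U_6$, a $2/k$ proportion, which tends to $0$. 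Near the threshold $n/5$ there simply is no such local rigidity (that phenomenon belongs to the much larger degree thresholds of Andr\'asfai--Erd\H{o}s--S\'os/H\"aggkvist type); moreover, edges of $F_k$ join positions differing by $6,11,\dots$ as well as by $1$, so ``drift by a single fixed step'' is not even the right invariant for the target graph. Nothing in the proposal replaces this step with a workable argument, and you yourself identify it as the crux.

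For comparison, the paper's route is quite different and is where all the work lies: it first proves \Cref{thm:vx-cycle} (in a maximal \cfree{} graph with $\delta>n/5$, every vertex has a neighbour in every $7$-cycle), which rests on forbidden-configuration lemmas established by double counting edges against carefully chosen $10$-vertex subgraphs --- no induced $6$-cycle (\Cref{lem:induced-cycle}), the statement about $12$-cycles with two consecutive diagonals (\Cref{lem:cycle-chords}), and the exclusion of two $7$-cycles meeting in a $3$-path (\Cref{lem:two-cycles}). It then proves \Cref{thm:hom-or-Fk} by induction on $k$: if $G$ contains no $F_k$, take a vertex-maximal blow-up of $F_{k-1}$, use edge-maximality to close each non-edge by a $4$-path, and use \Cref{lem:cycle-chords} together with \Cref{prop:Fk} to force either a copy of $F_k$ or full attachment of the outside vertex, concluding that $G$ is a blow-up of $F_{k-1}$. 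Your proposal contains none of these ingredients, so as it stands it has a genuine gap precisely at the structural heart of the proof.
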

    
	We remark that the analogue of this result for graphs of higher odd-girth
	does not hold in general. We discuss this further in the final section of
	this paper.
    
    \subsection{Organisation and Notation}
    
		The remainder of this paper is organised as follows. In
		\Cref{sec:overview} we shall provide an outline of the technical
		results needed to prove our main theorem. Many of these state that
		certain subgraphs cannot appear in maximal \cfree{} graphs of minimum
		degree larger than $n/5$. In the next three sections
		(\Cref{sec:induced-6-cycles} to \Cref{sec:7-cycles-3-path}) we shall
		prove each of these technical results. In \Cref{sec:main-theorem}, we
		deduce our main theorem, \Cref{thm:hom-to-family}. Finally,
		\Cref{sec:hom-thresholds} includes our results concerning homomorphism
		thresholds, \Cref{thm:hom-threshold} and \Cref{cor:hom-threshold-C-5}. 
		
		Our notation is standard. In particular, for a graph $G$, we use
		$|V(G)|$ to denote the number of vertices of $G$, $V(G)$ denotes the
		vertex set, $E(G)$ the edge set, and $\delta(G)$ denotes the minimum
		degree. For a vertex $v$, $N_G(v)$ denotes the neighbourhood of $v$,
		and for a subset $X\subseteq V(G)$, $N_G(v,X)$ denotes the
		neighbourhood of $v$ in $X$, i.e.~$N_G(v, X) = N_G(v) \cap X$. We shall
		often omit the use of the subscript `$G$'. If $X, Y \subseteq V(G)$,
		then we say an edge $e$ is an $X-Y$ \emph{edge} if one endpoint of $e$
		is in $X$, the other in $Y$.  If $X$=$\{x\}$, then we simply say $e$ is
		an $x-Y$ edge.  We denote by $(v_1\ldots v_\ell)$ the cycle on vertices
		$v_1, \ldots v_\ell$ taken in this order.  Similarly, we denote by
		$v_0\ldots v_\ell$ the path on vertices $v_0, \ldots , v_\ell$ taken in
		this order. A cycle (path) with $\ell$ edges is an $\ell$\emph{-cycle}
		($\ell$\emph{-path}).
     
\section{Overview} \label{sec:overview}
    
	In this section we provide a tour through the technical results needed to
	establish our main theorem. Note that in proving \Cref{thm:hom-to-family}
	we may assume our graph is maximal \cfree{}.  Accordingly, the following
	results concern maximal \cfree{} graphs.  The main tool needed for the
	proof of \Cref{thm:hom-to-family} is the following result.
    
    \begin{thm} \label{thm:vx-cycle}   
        \supposemax{}
        Then every vertex in $G$ has a neighbour in every $7$-cycle in $G$. 
    \end{thm}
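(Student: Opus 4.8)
The plan is to argue by contradiction. Suppose some vertex $v$ has no neighbour on a $7$-cycle $C = (c_1 \ldots c_7)$ of $G$. Note first that $C$ is induced: a chord $c_i c_{i+2}$ would create a triangle, and a chord $c_i c_{i+3}$ would create a $C_5$, so neither can occur. Since $G$ is maximal \cfree{} and $v c_i \notin E(G)$ for every $i$, the graph $G + v c_i$ contains a $C_3$ or a $C_5$, necessarily through the new edge; hence, for each $i$, either $v$ and $c_i$ have a common neighbour, or there is a path of length $4$ from $v$ to $c_i$ in $G$. We shall use throughout that $N(v)$ is an independent set (as $G$ is triangle-free) disjoint from $V(C)$, that $N(c_i) \cap N(c_{i \pm 1}) = \emptyset$ (triangle-freeness), and that $N(c_i) \cap N(c_{i \pm 3}) = \emptyset$ (a common neighbour of $c_i$ and $c_{i+3}$, which cannot lie on $C$ since $C$ is induced, together with the sub-path $c_i c_{i+1} c_{i+2} c_{i+3}$ of $C$, yields a $C_5$); in particular $N(c_i)$ can meet $N(c_j)$ only when $j \equiv i \pm 2 \pmod{7}$.

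The first key step is to show that $v$ has no common neighbour with two consecutive vertices of $C$. Indeed, if $x \in N(v) \cap N(c_i)$ and $y \in N(v) \cap N(c_{i+1})$, then $x \neq y$ (since $N(c_i) \cap N(c_{i+1}) = \emptyset$), and $x, y \notin V(C) \cup \{v\}$, so $(c_i\, x\, v\, y\, c_{i+1})$ is a genuine $5$-cycle, a contradiction. Hence the set $S$ of indices $i$ for which $v$ and $c_i$ have a common neighbour is an independent set in the $7$-cycle on $\{1, \ldots, 7\}$, and so $|S| \le 3$. Consequently there are at least four indices $i \notin S$, and for each of them $N(v) \cap N(c_i) = \emptyset$ and there is a path $v\, a_i\, b_i\, d_i\, c_i$ of length $4$ in $G$, with $a_i \in N(v)$ (so $a_i \notin V(C)$).

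The remainder of the argument is a case analysis of these $4$-paths, organised by where their interior vertices lie. For $i \notin S$ one has $d_i \in \{c_{i-1}, c_{i+1}\}$ or $d_i \notin V(C)$, and similarly for $b_i$; each configuration is tightly constrained by $\{C_3,C_5\}$-freeness. For example, if $d_i = c_{i+1}$ and $b_i = c_{i+2}$ then $a_i \in N(v) \cap N(c_{i+2})$, forcing $i + 2 \in S$; and more generally, any path of length at most $4$ between two vertices of $C$ that passes through $N(v)$ tends to close up with a short arc of $C$ into a forbidden $C_3$ or $C_5$. Feeding these restrictions back in, one shows that in every case either a $C_3$ or $C_5$ appears outright, or the sets $N(v)$, a suitable selection of the $N(c_i)$, and the neighbourhoods of the off-cycle interior vertices of the $4$-paths are pairwise disjoint (using the disjointness relations above together with $N(v) \cap N(c_i) = \emptyset$ for $i \notin S$), so that $\delta(G) > n/5$ forces these sets to cover more than $n$ vertices — a contradiction.

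The main obstacle is precisely this last step: the bookkeeping of the $4$-paths coming from the $\ge 4$ indices outside $S$ and, in each resulting configuration, the extraction of either a forbidden short cycle or a violation of the density bound. The symmetry of $C$ and the fact that the extremal configurations for the problem are blow-ups of $F_2 = C_7$ — in which every vertex trivially has a neighbour on any $7$-cycle — indicate which configurations are genuinely delicate and which are routine.
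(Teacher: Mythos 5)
Your opening moves are fine (the cycle is induced, the dichotomy ``common neighbour or $4$-path'' from maximality, and the observation that the set $S$ of indices where $v$ has a common neighbour with $c_i$ is independent in the $7$-cycle, so $|S|\le 3$), but everything after that is a declaration of intent rather than a proof. The entire content of the theorem lives in the step you defer: showing that each configuration of $4$-paths either closes into a $C_3$/$C_5$ or violates the degree bound. You assert that in every surviving configuration one can exhibit five pairwise disjoint neighbourhoods, but you neither enumerate the configurations nor verify this, and in fact that closure mechanism is too weak. There are configurations compatible with \cfree{}-ness and with all the disjointness relations you list --- for instance a neighbour $y_1$ of $v$ whose $4$-path lands so that $v$ and $c_{i}$, and $y_1$ and $c_{i+3}$, have common neighbours, producing two $7$-cycles sharing a $3$-path, or a $12$-cycle through $v$ and $C$ carrying two consecutive diagonals --- which cannot be killed by finding a short odd cycle or five disjoint big neighbourhoods among the vertices at hand.

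This is exactly why the paper does not prove the theorem directly: it first establishes \Cref{lem:induced-cycle} (no induced $6$-cycles), \Cref{lem:cycle-chords} (a $12$-cycle with two consecutive diagonals forces a M\"obius ladder), its consequence \Cref{cor:vx-cycle-two-neighs}, and \Cref{lem:two-cycles} (no two $7$-cycles meeting in a $3$-path), and each of these requires a different counting scheme from yours: one double-counts edges between a carefully chosen $10$- or $15$-vertex subgraph $H'$ and the rest of $G$, after verifying (often via \wellbehaved{}ness) that every vertex of $G$ has at most two (or three) neighbours in $H'$, giving contradictions of the form $10(n/5-2)>2(n-10)$. Only with \Cref{obs:one-common-neigh} in hand does the final step reduce to five pairwise disjoint neighbourhoods, as in your sketch. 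So the gap is concrete: the case analysis you label ``the main obstacle'' is the theorem, it is not routine bookkeeping, and the density argument you propose for closing it does not suffice without the intermediate forbidden-subgraph lemmas.
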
    
    
	We remark that Jin \cite{jin} proved the analogous theorem for $5$-cycles
	in triangle-free graphs of large enough minimum degree.  In order to
	establish \Cref{thm:vx-cycle} we shall need a sequence of lemmas which show
	that certain subgraphs cannot appear in maximal \cfree{} graphs of large
	minimum degree. The first of these lemmas, which shows that \cfree{} graphs
	with large minimum degree do not have induced $6$-cycles, proves very
	useful, and we shall use it throughout the paper.  Brandt and Ribe-Baumann
	\cite{brandt-ribe} mention it without proof.
    \begin{lem} \label{lem:induced-cycle}
        \supposemax{}
        Then $G$ does not contain an induced $6$-cycle.
    \end{lem}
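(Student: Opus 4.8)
The plan is to suppose, for contradiction, that $G$ contains an induced $6$-cycle $C=(v_1v_2\ldots v_6)$, and to exploit the rigid local structure forced by forbidding $C_3$ and $C_5$ together with the minimum-degree hypothesis.

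I would begin with a structural fact valid in any $\{C_3,C_5\}$-free graph: for every vertex $v$, both $N(v)$ and the second neighbourhood $N^2(v):=\{u:\operatorname{dist}(u,v)=2\}$ are independent. Independence of $N(v)$ is triangle-freeness; and if $x,y\in N^2(v)$ were adjacent, then choosing $x'\in N(v)\cap N(x)$ and $y'\in N(v)\cap N(y)$ gives either a triangle ($x'=y'$) or the $5$-cycle $vx'xyy'v$ ($x'\ne y'$). Hence every edge incident to $N(v)$ has its other endpoint in $\{v\}\cup N^2(v)$, so, writing $R(v):=V(G)\setminus(\{v\}\cup N(v)\cup N^2(v))$, there is no edge between $N(v)$ and $R(v)$. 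Since then every vertex of $N(v)$ has all its $>n/5$ neighbours inside $\{v\}\cup N^2(v)$, we get $|N^2(v)|>n/5-1$, while $|N(v)|=\deg(v)>n/5$; whence $|R(v)|<3n/5$, and every vertex of $R(v)$ has all its $>n/5$ neighbours inside $N^2(v)\cup R(v)$.

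Next I would locate $C$ in the layers around $v_1$: since $C$ is induced and $N(v_1),N^2(v_1)$ are independent, one has $v_2,v_6\in N(v_1)$ and $v_3,v_5\in N^2(v_1)$; moreover $v_1$ and $v_4$ have no common neighbour (else, with the path $v_1v_2v_3v_4$, we would find a $C_5$), so $N(v_1)\cap N(v_4)=\emptyset$ and $v_4\in R(v_1)$. A final ingredient comes from maximality: the non-edge $v_1v_4$, having no common neighbour, must lie on a $C_5$ of $G+v_1v_4$, i.e.\ $v_1$ and $v_4$ are joined in $G$ by a path of length $4$; tracing such a path $v_1pqrv_4$ through the layers forces $p\in N(v_1)$, $q\in N^2(v_1)$, $r\in N(v_4)\cap R(v_1)$ — in particular $v_4$ has a neighbour in $R(v_1)$. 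By the symmetry of $C$ the same analysis around $v_4$ gives layers $M_i$ about $v_4$ with $v_3,v_5\in M_1$, $v_2,v_6\in M_2$, $v_1\in R(v_4)$, $|M_1|>n/5$ and $|M_2|>n/5-1$.

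The final step is to play the two layerings against each other. From $N(v_1)\cap N(v_4)=\emptyset$ one gets $N(v_1)\subseteq M_2\cup R(v_4)$ and $N(v_4)\subseteq N^2(v_1)\cup R(v_1)$; one then applies the minimum-degree bound to the most constrained vertices — for instance a vertex of $N(v_1)\cap R(v_4)$ has all its neighbours inside $\{v_1\}\cup(N^2(v_1)\setminus N(v_4))$, and the neighbourhood of a vertex of $N^2(v_1)\cap N^2(v_4)$ is confined to the three ``corners'' $N(v_1)\cap R(v_4)$, $N(v_4)\cap R(v_1)$, $R(v_1)\cap R(v_4)$ — and, through a case analysis (according to whether every neighbour of $v_1$ lies at distance $2$ from $v_4$, the symmetric condition, and the maximality-forced $4$-paths for the antipodal pairs $v_2v_5$ and $v_3v_6$), one bounds $|N(v_1)|,|N^2(v_1)|,|R(v_1)|$ and their counterparts around $v_4$ against the identity $n=1+|N(v_1)|+|N^2(v_1)|+|R(v_1)|$ tightly enough to force $|V(G)|>n$. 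I expect this final combination to be the main obstacle: the hypothesis $\delta(G)>n/5$ is essentially tight for the lemma, so a naive comparison of two layerings only yields $3\delta\le n$, which is not a contradiction; extracting a genuine contradiction requires really using the vertices that are simultaneously far from two opposite vertices of $C$ — and most likely the layerings around $v_2,v_3,v_5,v_6$ as well.
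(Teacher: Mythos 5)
Your preliminary structure is sound and in fact mirrors the paper's setup: in a \cfree{} graph each neighbourhood and second neighbourhood is independent, antipodal vertices of an induced $6$-cycle have no common neighbour, and maximality therefore forces a path of length $4$ between each antipodal pair. The paper uses exactly this to attach three $4$-paths $x_1y_1y_2y_3x_4$, $x_2z_1z_2z_3x_5$, $x_3w_1w_2w_3x_6$ (with all new vertices distinct) to the cycle. But the decisive step of your argument is missing, and you say so yourself: you concede that comparing the layerings around $v_1$ and $v_4$ only gives a bound of the shape $3\delta \le n$, which is no contradiction when $\delta$ is barely above $n/5$, and the proposed rescue (``really using the vertices that are simultaneously far from two opposite vertices'' plus layerings around the other four cycle vertices) is not carried out, nor is any concrete mechanism indicated for how it would close the factor-of-$5/3$ gap. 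As it stands the proof stops exactly where the difficulty begins.

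The idea that actually closes the gap in the paper is different in kind: rather than partitioning $V(G)$ into distance layers from cycle vertices, one works with the $15$-vertex configuration $H$ consisting of the $6$-cycle together with the three $4$-paths, and shows that a suitable $10$-vertex induced subgraph of it (chosen according to whether two of the ``middle'' vertices $y_2,z_2,w_2$ have a common neighbour, and in the second case according to whether $z_3$ and $w_1$ do) has the property that \emph{every} vertex of $G$ has at most two neighbours in it. Double counting the edges between this $10$-set $S$ and $V(G)\setminus S$ then gives more than $10(n/5-2)=2(n-10)$ such edges from the minimum-degree side but at most $2(n-10)$ from the ``at most two neighbours'' side, a contradiction; the matching of the constant $10\cdot\frac15=2$ is precisely what makes the threshold $n/5$ accessible. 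If you want to salvage your approach, you would need to isolate some bounded vertex set with a uniform small bound on the number of neighbours any vertex of $G$ can have in it (this is the paper's notion underlying ``\wellbehaved{}'' subgraphs), which your global layer counts do not provide.
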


    We shall also need the fact that a `partial' M{\"o}bius ladder cannot 
    appear as a subgraph in the graphs we consider. More precisely, 
    we need the following lemma.
    
    \begin{lem} \label{lem:cycle-chords}
        \supposemax{}
        If $(x_1\ldots x_{12})$ is a $12$-cycle with two consecutive diagonals $x_1x_7$ and $x_2x_8$ present. Then either 
        $(x_1\ldots x_{12})$ or $(x_2x_3\ldots x_7x_1x_{12}\ldots x_8)$ induces a M{\"o}bius ladder.
    \end{lem}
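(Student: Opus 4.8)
The plan is to pin down the graph $H:=G[\{x_1,\dots,x_{12}\}]$ completely. Since $H$ contains the $12$-cycle together with the two given diagonals, it suffices to decide which chords of $(x_1\ldots x_{12})$ lie in $E(H)$. Using only that $G$ is \cfree{} and that $x_1x_7,x_2x_8\in E(G)$, I would first eliminate every chord at distance $2$ (it closes a triangle with the cycle) and at distance $4$ (it closes a $5$-cycle with the cycle). Among the twelve distance-$3$ chords, all but $x_3x_{12}$ and $x_6x_9$ close a $5$-cycle using one of $x_1x_7,x_2x_8$ — for instance $x_3x_6$ with $x_6x_7,x_7x_8,x_8x_2,x_2x_3$ is a $C_5$; likewise, among the twelve distance-$5$ chords, all but $x_4x_{11}$ and $x_5x_{10}$ close a triangle or a $5$-cycle with one of the two diagonals. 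The four remaining length-$6$ diagonals $x_3x_9,x_4x_{10},x_5x_{11},x_6x_{12}$ are each consistent with \cfree{}ness (as they must be, since $F_3$ is \cfree{}). Hence $E(H)=E(C_{12})\cup\{x_1x_7,x_2x_8\}\cup S$ with $S\subseteq A\cup B$, where $A:=\{x_3x_9,x_4x_{10},x_5x_{11},x_6x_{12}\}$ and $B:=\{x_3x_{12},x_4x_{11},x_5x_{10},x_6x_9\}$. The point of these two sets is that $E(C_{12})\cup\{x_1x_7,x_2x_8\}\cup A$ is exactly the edge set of the M\"obius ladder on $(x_1\ldots x_{12})$, while $E(C_{12})\cup\{x_1x_7,x_2x_8\}\cup B$ is exactly the edge set of the M\"obius ladder on $(x_2x_3\ldots x_7x_1x_{12}\ldots x_8)$; so it will suffice to show $S=A$ or $S=B$.

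Next I would show that $A$ and $B$ repel each other: for each of the sixteen pairs consisting of one edge of $A$ and one of $B$ one exhibits a short forbidden cycle (e.g.\ $x_3x_9$ and $x_4x_{11}$ give the $5$-cycle $x_3x_9x_{10}x_{11}x_4$, and $x_4x_{10}$ and $x_5x_{10}$ give the triangle $x_4x_5x_{10}$). Consequently $S\subseteq A$ or $S\subseteq B$, and it remains to rule out proper subsets.

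For this I would use \Cref{thm:vx-cycle} together with the two $7$-cycles $C':=(x_1x_7x_8x_9x_{10}x_{11}x_{12})$ and $C'':=(x_1x_2x_8x_9x_{10}x_{11}x_{12})$ of $G$ (which use $x_1x_7$ and $x_2x_8$ respectively). By \Cref{thm:vx-cycle} every vertex of $G$ has a neighbour on each. Applying this to $x_3$ on $C'$: since $x_2,x_4\notin V(C')$, the analysis above shows the only vertices of $C'$ that $x_3$ can be adjacent to are $x_9$ and $x_{12}$, so $x_3x_9\in E(G)$ or $x_3x_{12}\in E(G)$. Suppose first $x_3x_9\in E(G)$, so $S\subseteq A$ by the previous paragraph. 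Now $x_4$ has a neighbour on $C'$, and the only candidates are $x_{10}$ and $x_{11}$; since $x_3x_9$ is incompatible with $x_4x_{11}$, we get $x_4x_{10}\in E(G)$. Likewise the candidates for $x_5$ on $C'$ are $x_{10},x_{11}$, and $x_3x_9$ is incompatible with $x_5x_{10}$, so $x_5x_{11}\in E(G)$. For $x_6$ one uses $C''$ instead (as $x_7\in V(C')$ is already a neighbour of $x_6$): the only candidates in $V(C'')$ are $x_9$ and $x_{12}$, and $x_3x_9$ is incompatible with $x_6x_9$, so $x_6x_{12}\in E(G)$. Thus $A\subseteq S$, whence $S=A$ and $(x_1\ldots x_{12})$ induces a M\"obius ladder. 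The case $x_3x_{12}\in E(G)$ is entirely symmetric with the roles of $A$ and $B$ interchanged: the same three applications of \Cref{thm:vx-cycle} force $x_4x_{11},x_5x_{10},x_6x_9\in E(G)$, so $S=B$ and $(x_2x_3\ldots x_7x_1x_{12}\ldots x_8)$ induces a M\"obius ladder.

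The only substantial work is the bookkeeping in the first two paragraphs: listing the chords of $C_{12}$ and, for each, either producing the triangle or $5$-cycle that rules it out or verifying it survives, and then checking the sixteen incompatibilities between $A$ and $B$. I expect this enumeration to be the main obstacle, though it is entirely routine; the conceptual content is the observation that the surviving chords split into the two mutually exclusive families $A$ and $B$, and that \Cref{thm:vx-cycle} along $C'$ and $C''$ forces all three remaining chords of whichever family we land in. (\Cref{lem:induced-cycle} does not appear to be needed here, though it offers an alternative way to discard some distance-$3$ and distance-$5$ chords if one prefers.)
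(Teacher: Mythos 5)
Your reduction of the problem is fine as far as it goes: the enumeration showing that the only chords of $(x_1\ldots x_{12})$ compatible with \cfree{}ness and the two given diagonals are $A=\{x_3x_9,x_4x_{10},x_5x_{11},x_6x_{12}\}$ and $B=\{x_3x_{12},x_4x_{11},x_5x_{10},x_6x_9\}$, and the observation that every $A$--$B$ pair closes a triangle or a $5$-cycle, are both correct. The genuine gap is in the only step that forces any of these chords to actually be present: you invoke \Cref{thm:vx-cycle} to give $x_3,x_4,x_5,x_6$ neighbours on the $7$-cycles $C'$ and $C''$. But in this paper \Cref{thm:vx-cycle} is proved \emph{after}, and \emph{by means of}, \Cref{lem:cycle-chords}: its proof rests on \Cref{obs:one-common-neigh}, which uses \Cref{cor:vx-cycle-two-neighs} and \Cref{lem:two-cycles}, and \Cref{cor:vx-cycle-two-neighs} is deduced directly from \Cref{lem:cycle-chords} (as is much of the proof of \Cref{lem:two-cycles}). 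So your argument is circular, and the circular step is exactly the heart of the lemma: without some input beyond \cfree{}ness, the statement is simply false, since a bare $12$-cycle with only the two diagonals $x_1x_7,x_2x_8$ is itself \cfree{}.

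The paper closes this gap quite differently, and this is where the real work lies: it splits according to how many of the six diagonals are present, disposes of five diagonals via \Cref{lem:induced-cycle}, and rules out the configurations with exactly four (\Cref{prop:cycle-four-chords}) and with two or three diagonals (\Cref{prop:cycle-two-three-chords}) by hand. Those proofs use the edge-maximality of $G$ (every non-adjacent pair is joined by a path of length $2$ or $4$, producing new vertices $y_1,y_2,y_3$ outside the cycle), the absence of induced $6$-cycles, and then a double-counting of edges between a carefully chosen $10$-vertex subgraph and the rest of $G$ against the minimum degree $\delta(G)>n/5$. To repair your proposal you would need to replace the appeal to \Cref{thm:vx-cycle} by an argument of this kind (or give an independent proof that each of $x_3,\dots,x_6$ has a neighbour on $C'$ or $C''$ that does not pass through \Cref{lem:cycle-chords}); as written, the proposal does not prove the lemma.
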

    
    We note, and prove, the following useful corollary of \Cref{lem:cycle-chords}.
    
    \begin{cor} \label{cor:vx-cycle-two-neighs}
        \supposemax{}
        If $u$ is a vertex with no neighbours in a $7$-cycle $C$, then $u$
        has no neighbour with two neighbours in $C$.
    \end{cor}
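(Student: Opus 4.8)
The plan is to suppose for contradiction that $u$ has a neighbour $w$ with two neighbours in a $7$-cycle $C=(x_1\ldots x_7)$, and to derive a contradiction from \Cref{lem:cycle-chords}. First I would pin down $N(w)\cap V(C)$: two neighbours of $w$ in $C$ at cyclic distance $1$ would give a triangle and at distance $3$ a $5$-cycle, so any two neighbours of $w$ in $C$ are at distance exactly $2$; since $C_7$ has no three vertices pairwise at distance $2$, $w$ has exactly two neighbours in $C$, which after relabelling are $x_1$ and $x_3$, and $w\not\sim x_2$. Note also $u,w\notin V(C)$: indeed $u\sim w$, so if either $u$ or $w$ were on $C$ then $u$ would have a neighbour on $C$.

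Since $u\not\sim x_2$ and $G$ is maximal, $G+ux_2$ contains a triangle or a $5$-cycle through $ux_2$; equivalently, $u$ and $x_2$ have a common neighbour $z$, or there is a path $u\,a\,b\,c\,x_2$ of length $4$. In the first case $z\notin V(C)$ (as $z\sim u$) and $z\ne w$ (as $w\not\sim x_2$), so $(u\,w\,x_3\,x_2\,z)$ is a $5$-cycle --- a contradiction. So I may assume the second case and, moreover, that $u,x_2$ have no common neighbour. The degenerate sub-case $a=w$ is also quick: then $b\sim w$, so $b\in\{x_1,x_3\}$ if $b\in V(C)$, but either choice makes $\{c,b,x_2\}$ a triangle, hence $b\notin V(C)$, and similarly $c\notin V(C)$, so $(w\,b\,c\,x_2\,x_1)$ is a $5$-cycle --- again a contradiction. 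Hence I may also assume $a\ne w$, whence $a\not\sim w$ (else $uwa$ is a triangle).

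The crux is to verify that the twelve vertices $x_1,\ldots,x_7,u,w,a,b,c$ are pairwise distinct. All coincidences other than $b\in V(C)\cup\{w\}$ and $c\in V(C)\cup\{w\}$ hold automatically (the $x_i$ are distinct, $u,w\notin V(C)$, $u\ne w$, $a\notin V(C)$ since $a\sim u$, $a\ne w$, and $a,b,c,x_2$ are distinct as vertices of a path). For the rest, each bad possibility yields a short odd cycle, contradicting $\{C_3,C_5\}$-freeness: $b\in\{x_1,x_3\}$ gives the triangle $\{c,b,x_2\}$; $b\in\{x_4,x_7\}$ gives $a\sim b$, hence the $5$-cycle $(a\,x_4\,x_3\,w\,u)$ or $(a\,x_7\,x_1\,w\,u)$; $b\in\{x_5,x_6\}$ forces $c\notin V(C)$ and then the $5$-cycle $(c\,x_5\,x_4\,x_3\,x_2)$ or $(c\,x_6\,x_7\,x_1\,x_2)$; $b=w$ forces $a\sim w$; then $c\in V(C)$ forces $c\in\{x_1,x_3\}$ (since $c\sim x_2$), whence $b\sim c$ gives the $5$-cycle $(b\,x_1\,w\,u\,a)$ or $(b\,x_3\,w\,u\,a)$; and $c=w$ forces $w\sim x_2$. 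This is a dozen one-line checks, and is the main (routine) obstacle.

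With all twelve vertices distinct, I would finish by considering the $12$-cycle
\[
  D \;=\; (x_1\, x_2\, c\, b\, a\, u\, w\, x_3\, x_4\, x_5\, x_6\, x_7),
\]
whose edges all lie in $C$, at $w$, or on the path $u\,a\,b\,c\,x_2$. Writing $D=(y_1\ldots y_{12})$ in this order, $y_1y_7=x_1w$ and $y_2y_8=x_2x_3$ are two consecutive diagonals of $D$ present in $G$, so \Cref{lem:cycle-chords} applies: either $D$ or the twisted $12$-cycle $(y_2\,y_3\,y_4\,y_5\,y_6\,y_7\,y_1\,y_{12}\,y_{11}\,y_{10}\,y_9\,y_8)=(x_2\,c\,b\,a\,u\,w\,x_1\,x_7\,x_6\,x_5\,x_4\,x_3)$ induces a M{\"o}bius ladder. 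In the first case the diagonal $y_6y_{12}=ux_7$ must be present; in the second the diagonal $ux_4$ of the twisted cycle must be present. Either way $u$ has a neighbour in $C$, contradicting our assumption, and this completes the proof.
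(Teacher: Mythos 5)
Your proposal is correct and follows essentially the same route as the paper's proof: use maximality to obtain a $4$-path from $u$ to the vertex of $C$ lying between the two neighbours of $w$, check the twelve vertices are distinct, and then apply \Cref{lem:cycle-chords} to the resulting $12$-cycle with consecutive diagonals to force a neighbour of $u$ on $C$. The only difference is that you spell out the distinctness checks and the degenerate cases (the $2$-path and the $4$-path passing through $w$) that the paper leaves as ``one may check.''
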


    \begin{proof}
		Suppose that $C = (x_1 \ldots x_7)$ and $u$ has no neighbours in $C$,
		but a neighbour $v$ of $u$ has two neighbours in $C$. Say, $v$ is
		adjacent to $x_2$ and $x_7$ (see \Cref{prop:well-behaved} below).
        
        \begin{figure}[h]
			\centering
			\includegraphics[scale = .85]{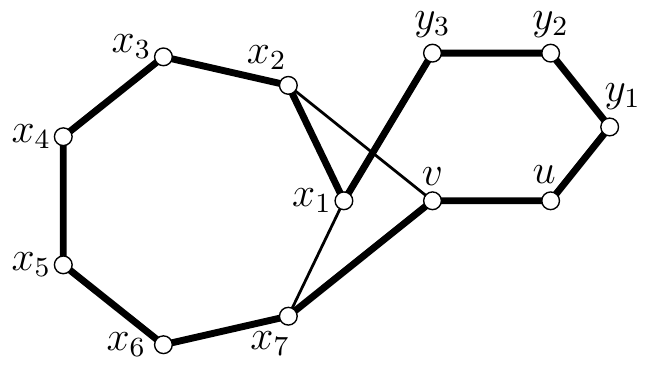}
			\caption{}
			\label{fig:seven-cycle-u-v-cycle}
        \end{figure}            
         
		Since $u$ is not adjacent to $x_1$ and $G$ is maximal $\{C_3,
		C_5\}$-free, there must be a path of length $2$ or $4$ between them;
		but a path of length $2$ is impossible (it will complete the path $u v
		x_2 x_1$ to a $5$-cycle), so there is a $4$-path $u y_1 y_2 y_3 x_1$.
		One may check that none of $y_1, y_2, y_3$ is equal to one of the
		vertices of $C$ or to $v$ (see \Cref{fig:seven-cycle-u-v-cycle}).  But
		then $(x_1 \ldots x_7 v u y_1 y_2 y_3)$ is a $12$-cycle with two
		consecutive diagonals $x_1 x_7$ and $x_2 v$.  It follows from
		\Cref{lem:cycle-chords} that all diagonals in the cycle must be present
		(or we need to consider the $12$-cycle $(x_2 \ldots x_7 x_1 y_3 y_2 y_1
		u v)$ with diagonals $x_1 x_2$ and $x_7 v$). In particular, $u$ has a
		neighbour in $C$, a contradiction.
    \end{proof}
    
	Finally, in order to prove \Cref{thm:vx-cycle}, we establish the following
	lemma, which is the last of our results regarding forbidden subgraphs
	in maximal \cfree{} graphs of large minimum degree.
    
    \begin{lem} \label{lem:two-cycles}
        \supposemax{}
        Then $G$ does not contain, as an induced graph, the graph obtained by
        two $7$-cycles whose intersection is a path of length $3$ (see \Cref{fig:two-cycles}).
    \end{lem}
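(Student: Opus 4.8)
The plan is to assume, for contradiction, that $G$ contains the graph in question as an induced subgraph, and then to apply maximality repeatedly until a triangle, a $5$-cycle, or an induced $6$-cycle appears (contradicting $\{C_3,C_5\}$-freeness or \Cref{lem:induced-cycle}), with \Cref{cor:vx-cycle-two-neighs} and \Cref{lem:cycle-chords} used to close the more stubborn cases. Name the two $7$-cycles $C=(v_0v_1v_2v_3a_1a_2a_3)$ and $C'=(v_0v_1v_2v_3b_1b_2b_3)$, so that their intersection is the induced path $v_0v_1v_2v_3$, and write $H$ for the induced subgraph on $V(C)\cup V(C')$; by assumption $H$ is exactly the union of $C$ and $C'$.

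Two facts are recorded for use throughout. First, since $H$ is induced, $b_2$ has no neighbour on $C$ and $a_2$ has no neighbour on $C'$; hence, by \Cref{cor:vx-cycle-two-neighs}, no neighbour of $b_2$ has two neighbours on $C$, and symmetrically for $a_2$ and $C'$. Second, each of the pairs $\{a_1,b_2\},\{a_3,b_2\},\{a_2,b_1\},\{a_2,b_3\}$ is non-adjacent and has no common neighbour: a common neighbour $w$ of, say, $a_1$ and $b_2$ would lie outside $V(H)$, would satisfy $w\not\sim v_3$ and $w\not\sim b_1$ (else a triangle with $a_1$ or with $b_2$), and then $(v_3\,a_1\,w\,b_2\,b_1)$ would be a $5$-cycle; the other three pairs are handled by the same two-step argument (exhibiting an edge between a neighbour of the first vertex and a neighbour of the second). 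Consequently, by maximality, each of these four pairs is joined by a path of length exactly $4$ in $G$, and, since a chord of such a path would create a triangle or a common neighbour of its endpoints, every such path is induced. Finally, none of these pairs is joined by a path of length $4$ inside $H$, so such a path must use at least one vertex outside $V(H)$.

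Fix an induced path $a_1\,p\,q\,r\,b_2$ as above. Appending the edges $v_3a_1$, $b_1b_2$, $b_1v_3$ gives the closed walk $(a_1\,p\,q\,r\,b_2\,b_1\,v_3)$, which a short check of possible coincidences shows to be a genuine $7$-cycle $D$, unless $p=v_3$ or $r=b_1$. If $r=b_1$ then $(a_1\,p\,q\,b_1\,v_3)$ is a $5$-cycle (after noting $p,q\ne v_3$). If $p=v_3$, then tracing the path one step at a time, and repeatedly using that $G$ has no triangle and no $5$-cycle, one reaches a $5$-cycle through $v_3b_1b_2$ or through $v_3v_2$ in every sub-case. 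When $D$ is a $7$-cycle, one first verifies that $D$ is induced (the same no-triangle/no-$C_5$/no-common-neighbour facts forbid every chord), and then locates $p$ and $r$: either $p=a_2$ or $p\notin V(H)$, and either $r=b_3$ or $r\notin V(H)$. If $p=a_2$ or $r=b_3$, following the neighbours $a_2,a_3$ or $b_3,v_0$ again produces a $5$-cycle; a symmetric analysis starting from the path joining $a_2$ to $b_1$ (or $a_3$ to $b_2$) disposes of the remaining mixed configurations, and in the case where the middle vertex $q$ of the connecting path equals $v_1$ one obtains an induced $6$-cycle instead, contradicting \Cref{lem:induced-cycle}.

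The crux is the remaining case, in which a connecting path lies entirely outside $V(H)$: then one has a new $7$-cycle $D$ meeting $H$ only along a short path, and the non-edges between the new vertices and $V(H)$ must themselves be resolved by maximality. Keeping track of all of these, one shows that every way of resolving them either re-creates one of the short cycles already found, or produces a $12$-cycle carrying two consecutive diagonals — to which \Cref{lem:cycle-chords} applies, forcing a chord that yields a short odd cycle — or forces two $7$-cycles meeting in a path of length $3$ whose union contains an induced $6$-cycle. This bookkeeping, rather than any single clever step, is the main difficulty; throughout, the hypothesis $\delta(G)>n/5$ enters only through \Cref{lem:induced-cycle}, \Cref{lem:cycle-chords} and \Cref{cor:vx-cycle-two-neighs}.
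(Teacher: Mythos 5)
There is a genuine gap, and it sits exactly where you place the ``bookkeeping''. Your plan is to reach a contradiction by purely local propagation -- chasing maximality until a triangle, a $5$-cycle, an induced $6$-cycle, or a configuration covered by \Cref{lem:cycle-chords} or \Cref{cor:vx-cycle-two-neighs} appears -- and you state explicitly that $\delta(G)>n/5$ enters only through those three auxiliary results. But the configuration you must exclude is not locally contradictory. Already in your sub-case $p=a_2$ (in the paper's labels: a $3$-path $x_6y_1y_2x_9$ joining the two ``inner'' vertices of the cycles), there is no forced $5$-cycle: your claim that ``following the neighbours $a_2,a_3$ \dots again produces a $5$-cycle'' is false. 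That configuration is consistent with being \cfree{} and having no induced $6$-cycle; the paper rules it out only by proving that the enlarged graph on $\{x_1,\dots,x_{10},y_1,y_2\}$ is \wellbehaved{} and then \emph{double counting} the edges between a $10$-vertex subgraph and the rest of $G$ against the minimum degree. The same is true of your ``crux'' case, where the connecting $4$-path avoids $H$ entirely: the paper first pins down all edges of the $15$-vertex graph $F$ (\Cref{claim:no-additional-edges}), then shows no vertex of $G$ has more than two, respectively three, neighbours in a suitably trimmed subgraph, and closes with a counting contradiction. No amount of case-chasing through \Cref{lem:induced-cycle}, \Cref{lem:cycle-chords} and \Cref{cor:vx-cycle-two-neighs} alone can substitute for this: those lemmas constrain neighbourhoods, but the final contradiction in every branch is quantitative, pitting ``every vertex has at most $2$ (or $3$) neighbours in a $10$- (or $15$-) vertex set'' against $\delta(G)>n/5$.

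Your preliminary observations are fine (the four pairs having no common neighbour, the induced $4$-paths, the degenerate coincidences $p=v_3$ and $r=b_1$ giving immediate $5$-cycles), and they roughly parallel the paper's reduction to its three cases (a $3$-path $x_6$--$x_9$, a $3$-path $x_7$--$x_8$, or a $4$-path $x_6$--$x_8$ outside $H$). But from that point on the proposal asserts rather than proves: the hard cases are each dismissed with ``one shows that every way of resolving them \dots'', and the one concrete mechanism you do name for the main case (a $12$-cycle with two consecutive diagonals, or an induced $6$-cycle) is not what actually happens -- the surviving configurations are killed only by the minimum-degree count. To repair the argument you would need, for each case, an explicit small subgraph together with a uniform bound on the number of neighbours any vertex of $G$ can have in it, followed by the edge-counting step; that is precisely the content of \Cref{claim:case-one-well-behaved}, \Cref{claim:two-neighs-case-2a}, \Cref{claim:no-additional-edges-case2c} and \Cref{claim:four-path-no-three-neighs} in the paper, and it is missing from your proposal.
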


	Before proceeding to the proofs of the above forbidden subgraph lemmas, we
	shall show how to prove \Cref{thm:vx-cycle} using
	\Cref{lem:induced-cycle,lem:cycle-chords,lem:two-cycles}. The proofs
	of these lemmas shall be deferred to
	Sections~\ref{sec:induced-6-cycles},~\ref{sec:12-cycles-few-chords}
	and~\ref{sec:7-cycles-3-path}, respectively. In order to aid in their
	proofs, we introduce the following definition.
    
    \begin{defn} \label{def:well-behaved}
		A subgraph $H$ of a graph $G$ is called \emph{\wellbehaved{}} (in $G$)
		if for every vertex $u$ in $G$, there is a vertex $v$ in $H$, such that
		$N_G(u, H) \subseteq N_H(v)$.
    \end{defn}

	In particular, this implies that $G[H \cup \{u\}]$ is homomorphic to $H$
	for every $u \in V(G)$.  Many of the subgraphs we consider are actually
	\wellbehaved{} (in their respective host graphs). For example, we note the
	following useful proposition.
    
    \begin{prop}\label{prop:well-behaved}
		Let $\ell \ge 2$ be an integer and let $G$ be a $\{C_3, \ldots,
		C_{2\ell - 1}\}$-free graph. Then $C_{2\ell+1}$ is well-behaved in $G$.
	\end{prop}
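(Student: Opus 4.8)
The plan is to fix the cycle $C_{2\ell+1}$ with vertices $x_1, \ldots, x_{2\ell+1}$ (indices mod $2\ell+1$) and an arbitrary vertex $u$ of $G$, and to understand the set $S = N_G(u) \cap V(C_{2\ell+1})$. We must find a single vertex $x_j$ of the cycle whose cycle-neighbourhood $\{x_{j-1}, x_{j+1}\}$ contains all of $S$. Equivalently, either $S = \emptyset$, $S = \{x_i\}$ for a single $i$, or $S = \{x_{j-1}, x_{j+1}\}$ for some $j$ — that is, $S$ consists of at most two vertices, and if there are two, they are at distance exactly $2$ along the cycle. So the entire content of the proposition is: \emph{$u$ cannot have two cycle-neighbours at distance $1$, $3$, $5$, \ldots, or $2\ell-1$ along $C_{2\ell+1}$, and cannot have three or more cycle-neighbours at all.}

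First I would rule out two neighbours at odd distance. If $x_a, x_b \in S$ with the two arcs of $C_{2\ell+1}$ between them having lengths $d$ and $2\ell+1-d$, then replacing one arc by the path $x_a u x_b$ of length $2$ produces a closed walk — in fact a cycle, since $u \notin V(C_{2\ell+1})$ — of length $d + 2$ using one arc and $2\ell+1 - d + 2 = 2\ell + 3 - d$ using the other. If $d$ is odd, then $d + 2$ is odd; and $d \le 2\ell - 1$ gives $d + 2 \le 2\ell + 1$, so $d+2 \in \{3, 5, \ldots, 2\ell+1\}$. If $d + 2 \le 2\ell - 1$ this is a forbidden cycle directly. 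The remaining odd values are $d + 2 = 2\ell+1$ (i.e.\ $d = 2\ell-1$), in which case the \emph{other} arc gives a cycle of length $2\ell + 3 - d = 4$, which is even, so instead I use the even arc: by symmetry one of the two arcs has even length, and that arc together with $x_a u x_b$ yields an odd closed walk of length between $3$ and $2\ell+1$ — wait, I should phrase this more carefully. The clean statement is: the two arc-lengths $d$ and $2\ell+1-d$ have opposite parities (their sum is odd), so exactly one of them, say $d$, is odd, and then $1 \le d \le 2\ell-1$ forces $3 \le d+2 \le 2\ell+1$, giving an odd cycle of length in $\{3, 5, \ldots, 2\ell-1\}$ unless $d = 2\ell - 1$; but $d = 2\ell-1$ means the two cycle-neighbours are at distance $2$ along the cycle via the \emph{other} arc, which is the permitted configuration. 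Hence any two vertices of $S$ lie at distance exactly $2$ along the cycle.

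It remains to show $|S| \le 2$ and that if $|S| = 2$ the two vertices are at cycle-distance $2$ (already shown), and to exclude $|S| \ge 3$. Suppose $x_a, x_b, x_c \in S$ are distinct. By the previous paragraph each pair is at cycle-distance $2$; but in a cycle of odd length $2\ell+1 \ge 5$, three distinct vertices pairwise at distance $2$ would have to be, up to relabelling, $x_1, x_3, x_5$ with $x_5$ and $x_1$ at distance $2$, forcing $2\ell+1 = 6$ — impossible since $2\ell+1$ is odd — or more generally the "distance-$2$ graph" on $C_{2\ell+1}$ is itself a single cycle of length $2\ell+1$ (as $\gcd(2,2\ell+1)=1$), which contains no triangle when $2\ell + 1 \ge 5$. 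Hence no three vertices of $S$ pairwise at distance $2$ exist, so $|S| \le 2$, and when $|S| = 2$ the two vertices are $x_{j-1}, x_{j+1}$ for some $j$, so $S \subseteq N_{C_{2\ell+1}}(x_j)$; when $|S| \le 1$ any incident cycle vertex works. This gives the required $v$ and completes the proof. The main obstacle is purely bookkeeping — getting the parity/range argument in the two-neighbour case exactly right, in particular correctly handling the boundary case $d = 2\ell-1$ so that it lands on the permitted distance-$2$ configuration rather than spuriously producing a forbidden short odd cycle.
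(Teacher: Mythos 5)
Your proof is correct and follows essentially the same route as the paper: two cycle-neighbours of $u$ span two arcs of opposite parity, and the odd arc together with the two edges at $u$ closes into a forbidden odd cycle of length at most $2\ell-1$ unless the two neighbours are at cycle-distance $2$. The only difference is cosmetic: you make explicit the exclusion of three pairwise distance-$2$ neighbours (via the distance-$2$ graph on $C_{2\ell+1}$ being again a $(2\ell+1)$-cycle), a step the paper leaves implicit.
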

	\begin{proof}
		Let $\ell \ge 2$ and let $C = (x_1\ldots x_{2\ell+1})$ be a
		$(2\ell+1)$-cycle in $G$. Suppose without loss of generality that $w
		\in V(G) \setminus V(C)$ is joined to $x_1$.  We claim that either
		$N(w, C) \subseteq N_{C}(x_2)$ or $N(w, C) \subseteq
		N_{C}(x_{2\ell+1})$.  Let $w'$ be another neighbour of $w$ in $C$ and
		suppose to the contrary that $w' \neq x_3, x_{2\ell}$. Let $P$ denote
		the path $x_1x_2x_3\ldots w'$ and $P'$ denote the path
		$x_1x_{2\ell+1}x_{2\ell}\ldots w'$. Now, note that $l(P) \le (2\ell+1)
		- 3 = 2\ell - 2$ and similarly $l(P') \le 2\ell-2$ (here $l(P)$
		denotes the length of $P$). Moreover, one of $P, P'$ must have
		odd length, say $P$. But then the cycle $(wx_1Pw')$ is odd and has
		length at most $2\ell -1$, a contradiction.
	\end{proof}    

	We need the following observation before proving \Cref{thm:vx-cycle}.
		
    \begin{obs} \label{obs:one-common-neigh}
		\supposemax{} Suppose that $u$ has no neighbour in a $7$-cycle $C$.
		Then $u$ has a common neighbour
		with at most one of the vertices in $C$.
    \end{obs}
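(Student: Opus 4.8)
The plan is to argue by contradiction: suppose $u$ has no neighbour in the $7$-cycle $C = (x_1 \ldots x_7)$, but $u$ has a common neighbour with two of the vertices of $C$. First I would use \Cref{cor:vx-cycle-two-neighs} to note that no neighbour of $u$ has two neighbours in $C$; hence each neighbour of $u$ that meets $C$ does so in exactly one vertex, and so two \emph{distinct} neighbours $v, v'$ of $u$ realise the two common neighbours, with $v$ adjacent to some $x_i \in C$ and $v'$ adjacent to some $x_j \in C$, $i \ne j$ (and $v \ne v'$, $v, v' \notin V(C)$ since $u$ has no neighbour in $C$). By relabelling we may take $v$ adjacent to $x_1$. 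The goal is to locate a $12$-cycle through $u$ with two consecutive diagonals and then invoke \Cref{lem:cycle-chords} to force $u$ to have a neighbour in $C$, contradiction — exactly the mechanism used in the proof of \Cref{cor:vx-cycle-two-neighs}.

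The main work is a short case analysis on the cyclic distance between $x_i = x_1$ and $x_j$ along $C$. Since $u$ has no neighbour in $C$ and $G$ is maximal \cfree{}, for each $x_k \in C$ there is a path of length $2$ or $4$ from $u$ to $x_k$; the length-$2$ option is precisely a common neighbour. I would combine the edge $u v x_1$ with a short arc of $C$ and, if necessary, a $4$-path from $u$ to another vertex of $C$ (ruling out, via \Cref{prop:well-behaved} and the girth/parity constraints, that its internal vertices coincide with vertices of $C$, with $v$, or with $v'$), to build a $12$-cycle of the form (arc of $C$)$\,$--$\,x_1 v u \cdots$ carrying the chords $x_1 x_7$ and $x_2 v$ (or an appropriate relabelled pair). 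Concretely, when $x_j$ is at distance $2$ or $3$ from $x_1$, the edge $u v' x_j$ together with the length-$5$ arc of $C$ from $x_j$ back around to $x_1$, closed up through $v$ and $u$, is too short or has the wrong parity unless one passes to a $4$-path, and in each configuration one ends with a $12$-cycle plus two consecutive diagonals, to which \Cref{lem:cycle-chords} applies and yields a neighbour of $u$ on $C$.

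The hard part will be bookkeeping: making sure that in every distance case the closed walk produced is genuinely a $12$-cycle (no repeated vertices) with two \emph{consecutive} diagonals in the sense required by \Cref{lem:cycle-chords}, and that the alternative $12$-cycle offered by that lemma's conclusion also forces a $u$--$C$ edge. I expect the cleanest route is to reduce, via \Cref{cor:vx-cycle-two-neighs}, to the two distinct neighbours $v, v'$ and then observe that $u v v' $ cannot be used directly (it is not an edge in general), so one genuinely needs the $4$-path machinery for the ``far apart'' cases while the ``close'' cases collapse quickly because a short arc of $C$ plus $v, u, v'$ would create a forbidden short odd cycle. Once the $12$-cycle-with-consecutive-diagonals configuration is in hand in every case, the contradiction is immediate from \Cref{lem:cycle-chords}, exactly as in \Cref{cor:vx-cycle-two-neighs}, so no further minimum-degree input beyond what those results already encapsulate is needed here.
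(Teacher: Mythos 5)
Your opening move -- using \Cref{cor:vx-cycle-two-neighs} to reduce to two distinct neighbours $v \sim x_1$ and $w \sim x_j$ of $u$, each with exactly one neighbour in $C$ -- matches the paper, and the adjacent case $x_j = x_2$ does collapse to a triangle or a $5$-cycle as you say. The gap is in your central claim that every remaining case produces a $12$-cycle with two consecutive diagonals so that \Cref{lem:cycle-chords} finishes the job. The consecutive diagonals in the proof of \Cref{cor:vx-cycle-two-neighs} are $x_1x_7$ (an edge of $C$) and $x_2v$, and they exist only because there $v$ has \emph{two} neighbours in $C$ ($x_2$ and $x_7$). In the present setting the very corollary you invoke first guarantees that no neighbour of $u$ has two neighbours in $C$, so that configuration -- and any relabelling of it -- is unavailable: a $12$-cycle through $u$ and $C$ built from a $4$-path carries only the single diagonal coming from an edge of $C$, with no known diagonal consecutive to it.

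Concretely, the two nontrivial cases need different tools. If $u$ has common neighbours $v$ with $x_1$ and $w$ with $x_3$, the cycle $(vuwx_3x_2x_1)$ has length six, which is even, so no ``forbidden short odd cycle'' appears; the paper kills this case with \Cref{lem:induced-cycle} (no induced $C_6$), ruling out each possible chord via the hypothesis on $u$ and \Cref{cor:vx-cycle-two-neighs}. If $u$ has common neighbours with $x_1$ and $x_4$, the ten vertices $u, v, w, x_1, \ldots, x_7$ induce two $7$-cycles meeting in a $3$-path, and the paper disposes of this configuration only through the separate and substantially harder \Cref{lem:two-cycles}, itself a minimum-degree double-counting result occupying a full section. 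So your assertion that nothing beyond \Cref{cor:vx-cycle-two-neighs}, \Cref{lem:cycle-chords} and \Cref{prop:well-behaved} is needed is exactly where the plan fails: without \Cref{lem:induced-cycle} and \Cref{lem:two-cycles} the distance-two and distance-three cases are not closed, and I see no way to manufacture the two consecutive diagonals your route requires.
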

	\begin{proof}
		
		Suppose that $u$ has no neighbour in the cycle $C = (x_1 \ldots x_7)$.
		Furthermore, suppose that $u$ has a common neighbour $v$ with $x_1$.
		By symmetry, it suffices to show that $u$ has no common neighbour with
		$x_2$, $x_3$ or $x_4$.  It easily follows that $u$ and $x_2$ have no
		common neighbour (otherwise, a cycle of length $3$ or $5$ is formed).
		Suppose that $u$ and $x_3$ have a common neighbour $w$. Observe that $w
		\neq v$ by \Cref{cor:vx-cycle-two-neighs}.  Consider the $6$-cycle $(v
		u w x_3 x_2 x_1)$.  Recall that $G$ has no induced $6$-cycles; thus one
		of $u x_2, v x_3, w x_1$ is an edge in $G$.  But $u x_2$ is not an
		edge, by the assumption that $u$ has no neighbour in $C$, and if one of
		$v x_3$ and $w x_1$ is an edge, a contradiction to
		\Cref{cor:vx-cycle-two-neighs} is reached.  Finally, if $u$ and $x_4$
		have a common neighbour $w$ (which, as before, is not equal to $v$),
		then the set $\{u, v, w, x_1, \ldots, x_7\}$ induces a graph that
		consists of two $7$-cycles whose intersection is a path of length $3$,
		contradicting \Cref{lem:two-cycles}.
	\end{proof}
    
    \begin{proof}[ of \Cref{thm:vx-cycle}]
                
		Suppose that the theorem is false and choose a vertex $u$ and a
		$7$-cycle $C$ which minimize the distance between $u$ and $C$ such that
		$u$ has no neighbour in $C$. Since $G$ must be connected, it easily
		follows that there is a path of length two between $u$ and $C$.
		Therefore, we may assume without loss of generality that $u$ has no
		neighbour in the $7$-cycle $C = (x_1 \ldots x_7)$ and $v$ is a common
		neighbour of $u$ and $x_1$.  Since $u$ is not joined to $x_2$ and $G$
		is maximal \cfree, there is a $4$-path $u y_1 y_2 y_3 x_2$ between $u$
		and $x_2$ (a $2$-path would create a $5$-cycle).  We note that $y_1$
		cannot be joined to $x_1$, otherwise a $5$-cycle is formed, so in
		particular $y_1 \neq v$. Thus, by \Cref{obs:one-common-neigh}, $y_1$
		has no neighbours in $C$.  We note that no two of the four vertices
		$\{u, x_2, x_3, x_6\}$ have a common neighbour; this follows from
		\Cref{obs:one-common-neigh} and the assumption that $G$ is \cfree.  It
		follows from the minimum degree condition that $y_1$ has a common
		neighbour with one of $u, x_2, x_3, x_6$.  But $y_1$ does not have a
		common neighbour with either $u$ or $x_2$ (otherwise, a cycle of length
		$3$ or $5$ if formed).  Thus $y_1$ has a common neighbour with either
		$x_3$ or $x_6$. Assume that $y_1$ has a common neighbour with $x_3$
		(with $x_6$). Then, by \Cref{obs:one-common-neigh}, $y_1$ has no common
		neighbours with any other vertex in $C$.  It follows that no two of the
		vertices in $\{u, y_1, x_2, x_5, x_6\}$ (in $\{u, y_1, x_3, x_4,
		x_7\}$) have a common neighbour, a contradiction to the minimum degree
		condition.
    \end{proof}

	In the next three sections we shall prove
	\Cref{lem:induced-cycle,lem:cycle-chords,lem:two-cycles}. The general
	strategy is the following. We want to show that some graph $F$ cannot
	appear in a maximal \cfree{} graph $G$ of large minimum degree.  If $F$ is
	a subgraph of $G$, and if every vertex has a `small' number of neighbours
	in $F$, then double counting the edges between $V(F)$ and $V(G)\setminus
	V(F)$ will produce a contradiction with the minimum degree condition. Often
	the original target graph $F$ does not satisfy this goal, and we shall need
	to pass to some suitable subgraph of $F$ which meets our needs. This
	requires detailed analysis of the possible neighbourhoods of vertices in
	$F$ (or some subgraph of $F$).    

\section{No induced $6$-cycles} \label{sec:induced-6-cycles}
	
   Brandt and Ribe-Baumann~\cite{brandt-ribe} stated that maximal \cfree{}
   graphs of high minimum degree forbid induced $6$-cycles. However, they did
   not provide a proof and for completeness we provide one in this section. 
   
   \begin{proof}[ of \Cref{lem:induced-cycle}] 
		Suppose $G$ contains an induced $6$-cycle $C= (x_1\ldots x_6)$. By the
		edge-maximality of $G$ there are three $4$-paths $P_{14} =
		x_1y_1y_2y_3x_4$, $P_{25} = x_2z_1z_2z_3x_5$, and $P_{36} =
		x_3w_1w_2w_3x_6$. It is easily verified that all the vertices in the
		union of these paths are distinct. Denote by $H$ the graph induced on
		$V(C) \cup V(P_{14})\cup V(P_{25})\cup V(P_{36})$ (see
		\Cref{fig:forbid-ind-6-cycle}).  We shall show that $G$ cannot
		contain $H$ as a subgraph. The proof breaks into two cases:
    
		\begin{enumerate}
			\item \label{itm:ind-6-cycle-middle}
				At least two vertices from $\{y_2, z_2, w_2\}$ have a common
				neighbour.
			\item \label{itm:ind-6-cycle-no-middle}
				No pairs from $\{y_2, z_2, w_2\}$ have a common neighbour.	
		\end{enumerate}
		
		\begin{figure}[h]
			\centering
			\includegraphics[scale = .85]{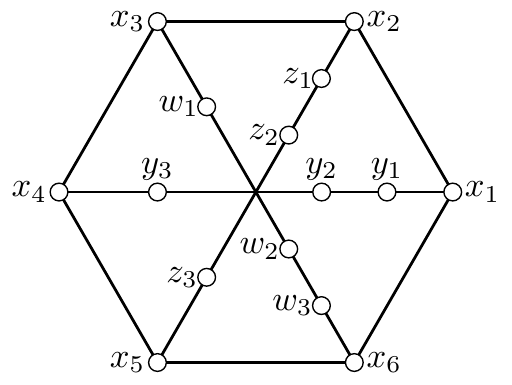}
			\caption{Constructing $H$ from an induced $6$-cycle}
			\label{fig:forbid-ind-6-cycle}
		\end{figure}
					
		In each case, we shall find a $10$-vertex subgraph of $H$ for which
		every vertex of $G$ has at most two neighbours in $H$. We reach a
		contradiction to the minimum degree condition on $G$ via double
		counting the edges between this subgraph and the rest of $G$.

		\subsection{Case \ref{itm:ind-6-cycle-middle}}
			
			Suppose, without loss of generality, that $y_2$ and $z_2$ have a
			common neighbour $v$.  Denote by $H'$ the $10$-vertex graph induced
			on $V(H)\setminus V(P_{36})$.
			
			\begin{claim}\label{claim:ind-6-cycle-claim1}
				Every vertex of $G$ has at most two neighbours in $H'$.
			\end{claim}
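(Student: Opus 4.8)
The plan is to exploit the rigidity imposed by $G$ being $\{C_3,C_5\}$-free, together with the absence of induced $6$-cycles (\Cref{lem:induced-cycle}) and the corollaries \Cref{cor:vx-cycle-two-neighs,obs:one-common-neigh}, to severely restrict how a vertex $w \in V(G)$ can attach to the $10$ vertices of $H'$. Recall $H'$ is induced on $V(C) \cup V(P_{14}) \cup V(P_{25}) \cup \{v\}$, where $C = (x_1\ldots x_6)$ is the induced $6$-cycle, $P_{14} = x_1 y_1 y_2 y_3 x_4$, $P_{25} = x_2 z_1 z_2 z_3 x_5$, and $v$ is a common neighbour of $y_2$ and $z_2$; so $|V(H')| = 6 + 3 + 3 + 1 - \text{(overlaps)} = 10$ since $x_1,x_2,x_4,x_5$ are shared with $C$. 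First I would record the edges of $H'$: the six cycle edges of $C$, the path edges of $P_{14}$ and $P_{25}$, the edges $vy_2$ and $vz_2$, and note that no other edges are present — this requires checking that, e.g., $y_2 z_2$ is not an edge (else $x_2 z_1 z_2 y_2 y_1 x_1 x_2$\ldots\ one gets a short odd cycle or uses the $C_3/C_5$-freeness) and that $v$ has no further neighbours among $V(C) \cup V(P_{14}) \cup V(P_{25})$ beyond $y_2,z_2$ (a neighbour of $v$ on $C$ together with $v y_2$ or $v z_2$ would, via the paths, close up a $C_3$ or $C_5$; a neighbour like $v y_1$ makes the triangle $v y_1 y_2$).

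The heart of the argument is a case analysis on which vertex of $H'$ a putative ``bad'' vertex $w$ — one with at least three neighbours in $H'$ — could be adjacent to. The key structural inputs are: (i) any three vertices of $C$ that $w$ sees must be mutually at even distance in $C$, but $C_6$ is bipartite with parts $\{x_1,x_3,x_5\}$ and $\{x_2,x_4,x_6\}$, so $w$'s neighbours in $C$ lie in one part; if $w$ had all of $\{x_1,x_3,x_5\}$ (say) then $(w x_1 x_2 x_3)$ is forced to avoid being a $C_3$ only because $x_1 x_3 \notin E$, fine, but then together with the paths one builds a forbidden configuration — more usefully, \Cref{prop:well-behaved} tells us $w$'s neighbourhood in any $7$-cycle or (with care) any short odd structure is confined, and \Cref{cor:vx-cycle-two-neighs} kills certain two-neighbour patterns. (ii) Along each induced $4$-path $P_{14}$, $w$ cannot be adjacent to two vertices at distance $2$ (that would be a $C_3$ through… no — distance $2$ along the path plus $w$ is a $C_3$? no, it's a triangle $w\,y_1\,y_2$ only if $w$ is adjacent to $y_1,y_2$; adjacency to $y_1$ and $y_3$ gives the $5$-cycle $w y_1 y_2 y_3 w$? that's length $4$; rather $w y_1, w y_3$ with $y_1 y_2 y_3$ gives a $C_4$, which is allowed, so one must instead chase how this interacts with $C$). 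The cleanest route is: suppose $w$ has $\ge 3$ neighbours in $H'$; break into subcases according to $|N(w,V(C))|$. If $|N(w,V(C))| \ge 2$, then $w$ has two neighbours at distance $2$ in $C$, and these two $C$-vertices plus $w$ and the rest of $C$ contain (after adding one path edge if needed) an induced $6$-cycle with a chord — and one then invokes that $G$ has no induced $6$-cycle by re-routing, or directly reaches a $C_5$. If $|N(w,V(C))| \le 1$, then $w$ has $\ge 2$ neighbours among $\{y_1,y_2,y_3,z_1,z_2,z_3,v\}$, and since $w$ can have at most one neighbour on each of the two internally-disjoint structures by \Cref{obs:one-common-neigh}-type reasoning (two neighbours on $P_{14}$ give either a triangle or, combined with $C$, a $5$-cycle), $w$ ends up with exactly one neighbour on $\{y_1,y_2,y_3\}$, one on $\{z_1,z_2,z_3\}$, possibly $v$ — and each such pattern is shown to create a $C_3$, a $C_5$, or an induced $6$-cycle using the fixed edges of $H'$.

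Concretely, the main obstacle I anticipate is the sheer number of adjacency patterns — roughly, for each of the $\binom{10}{3}$ triples one must rule out $w$ seeing all three — and organizing this so it is short. I would tame it by first proving two reduction lemmas: (a) $w$ has at most one neighbour in $\{x_1,\ldots,x_6\}$ unless its two neighbours are antipodal-type and then a $C_5$ appears through a path vertex; (b) $w$ has at most one neighbour in $V(P_{14})\setminus V(C) = \{y_1,y_2,y_3\}$ and at most one in $\{z_1,z_2,z_3\}$, because two neighbours in $\{y_1,y_2,y_3\}$ are at distance $1$ or $2$: distance $1$ is a $C_3$, distance $2$ (i.e.\ $y_1,y_3$) gives, with $x_1 x_2$ and the $C$-edge $x_1 x_6$, a short odd closed walk one can shorten to a $C_5$ or reduce to a forbidden induced $C_6$. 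With (a) and (b) in hand, a vertex with $\ge 3$ neighbours in $H'$ must have exactly one neighbour in $C$, one in $\{y_i\}$, one in $\{z_j\}$ (with $v$ optional) — a bounded list of essentially-distinct cases, each dispatched by exhibiting the short cycle or the induced $C_6$. The only genuinely delicate points will be verifying non-edges of $H'$ at the outset and making sure that when we ``re-route through a path edge'' to find an induced $6$-cycle, the resulting cycle really is induced (which is where being careful about $w$'s other neighbours, and re-applying \Cref{lem:induced-cycle} to a different $6$-cycle, matters).
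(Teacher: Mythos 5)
There is a genuine gap — in fact two. First, your logical scaffolding is circular: \Cref{claim:ind-6-cycle-claim1} is a step \emph{inside} the proof of \Cref{lem:induced-cycle}, so you cannot take ``the absence of induced $6$-cycles'' as an input, let alone ``re-apply \Cref{lem:induced-cycle} to a different $6$-cycle,'' and you certainly cannot use \Cref{cor:vx-cycle-two-neighs} or \Cref{obs:one-common-neigh}: those rest on \Cref{lem:cycle-chords} and \Cref{lem:two-cycles}, whose proofs in Sections~\ref{sec:12-cycles-few-chords} and~\ref{sec:7-cycles-3-path} themselves invoke \Cref{lem:induced-cycle}. The paper's proof of this claim uses only the $\{C_3,C_5\}$-freeness of $G$ together with the concrete edges of $H$ and the common neighbour $v$ — nothing downstream.

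Second, you have the wrong $H'$, and with it you lose the mechanism that makes the claim true. In the paper, $H'$ is induced on $V(H)\setminus V(P_{36})=\{x_1,x_2,x_4,x_5,y_1,y_2,y_3,z_1,z_2,z_3\}$: it contains neither $x_3,x_6$ nor $v$ (your count $6+3+3+1$ is $13$, not $10$). For your $13$-vertex set the claim is simply not provable: a vertex adjacent to $x_1,x_3,x_5$, or a ``clone of $x_2$'' adjacent to $x_1,x_3,z_1$, creates no triangle, no $5$-cycle and no induced $6$-cycle with $H\cup\{v\}$, so none of your tools excludes it, yet it has three neighbours in your set — this is exactly why the paper deletes $x_3$ and $x_6$ (all of $P_{36}$). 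Relatedly, your reduction lemma (b) is false as stated: a vertex adjacent to $y_1$ and $y_3$ only yields a $C_4$, which is allowed; the paper instead bounds such vertices by noting that the pairs $\{y_1,z_1\}$ and $\{y_3,z_3\}$ have no common neighbours (via the $3$-paths $y_1x_1x_2z_1$ and $y_3x_4x_5z_3$). Finally, the defining feature of Case~\ref{itm:ind-6-cycle-middle} — the common neighbour $v$ of $y_2$ and $z_2$ — must be used to manufacture $5$-cycles such as $(y_2uz_1z_2v)$ and $(y_2uz_3z_2v)$ that cap a vertex at two neighbours in $H'$; placing $v$ inside the vertex set is not a substitute for these arguments, and your outline never uses $v$ at all.
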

		
			\begin{proof}
				Observe that $x_1$ and $x_4$ cannot have a common neighbour,
				else a $5$-cycle is formed. Thus, a vertex in $G$ can have at
				most two neighbours in $C\setminus \{x_3, x_6\}$, and if it has
				two such neighbours, then it is joined either to both $x_2$ and
				$x_4$ or to both $x_1$ and $x_5$. It is then routine to check
				that such a vertex cannot be joined to any other vertex
				of $H'$:  all cases lead to a triangle or pentagon in $G$.
				
				Let us now consider vertices which have precisely one neighbour
				in $C\setminus \{x_3, x_6\}$. By symmetry, let $u$ be a vertex
				joined to $x_1$.  We claim that $u$ can be joined to at most
				one other vertex in $H'$, which must be from $\{y_2,
				z_1\}$.  Indeed, it is easy to verify that $u$ cannot be joined
				to any vertices of $H'\setminus \{y_2, z_1\}$, since these
				cases lead to a triangle or pentagon in $G$.  Suppose $u$ is
				adjacent to both $y_2$ and $z_1$. This, however, produces the
				$5$-cycle $(y_2uz_1z_2v)$, a contradiction.
				
				Finally, we consider vertices which have no neighbour in
				$C\setminus \{x_3, x_6\}$.  First, note that if a vertex $u$ is
				joined to $y_2$, then its only other possible neighbour in $H'$
				is $z_2$ (and the same claim holds with the roles of $y_2$ and
				$z_2$ reversed). For example, if $u$ is joined to $y_2$ and
				$z_3$, the $5$-cycle $(y_2uz_3z_2v)$ is produced. One may
				dispense with the other cases similarly.  On the other hand,
				both pairs $\{y_3, z_3\}$ and $\{y_1, z_1\}$ do not have
				any common neighbours. It follows that any vertex with no
				neighbour in $C \setminus \{x_3, x_6\}$ has at most two
				neighbours in $H'$, and this finishes the proof of
				\Cref{claim:ind-6-cycle-claim1}.				
			\end{proof}

			Let us bound the number of edges between $V(H')$ and $V(G)\setminus
			V(H')$ in two ways. Using the minimum degree condition and
			\Cref{claim:ind-6-cycle-claim1}, we have that every vertex in $H'$
			has at most two neighbours in $H'$, and therefore has more than
			$n/5 - 2$ neighbours outside of $H'$. Thus, there are more than
			$10(n/5 - 2) = 2(n - 10)$ such edges. On the other hand,
			\Cref{claim:ind-6-cycle-claim1} implies that there are at most
			$2(n-10)$ such edges, a contradiction. This completes the proof of
			\Cref{lem:induced-cycle} under Case \ref{itm:ind-6-cycle-middle}.
            
	   \subsection{Case \ref{itm:ind-6-cycle-no-middle}}
	   
			Suppose the condition in Case~\ref{itm:ind-6-cycle-no-middle}
			holds; that is, no pairs from $\{y_2, z_2, w_2\}$ have a common
			neighbour. We begin by examining the size and structure of possible
			neighbourhoods in $H$ of vertices of $G$.
			
			Let $u$ be a vertex which is not joined to any vertex in $C$. If
			$u$ is joined to a middle vertex, say $y_2$, then by assumption it
			cannot be adjacent to $z_2$ or $w_2$. Further, $u$ cannot be joined
			to $y_1$ or $y_3$ (else, a triangle is formed), has at most one
			neighbour in $\{z_3, w_3\}$, and has at most one neighbour in
			$\{z_1, w_1\}$. Thus, $u$ has at most three neighbours in
			$H$. Similarly, one may verify that if $u$ has no neighbour in
			$\{y_2, z_2, w_2\}$, then $u$ has at most three neighbours in
			$H$ as well.
			
			Suppose $u$ is a vertex joined to two vertices of $C$. Say,
			by symmetry, that $u$ is joined to $x_2$ and $x_4$. Then it is easy
			to check that the only other possible neighbour of $u$ in $H$ is
			$w_1$.  Hence $u$ has at most three neighbours in $H$.
			
			If $u$ is a vertex joined to three neighbours of $C$, then
			(up to relabelling) $u$ is adjacent to all vertices in  $\{x_1,
			x_3, x_5\}$, and one may verify that $u$ can have no further
			neighbour in $H$. Thus, such vertices have at most three
			neighbours in $H$.
			
			Only one case remains: suppose $u$ has precisely one neighbour in
			$C$, and, by symmetry, suppose this neighbour is $x_1$. In this
			case, $u$ may be joined to all vertices in $\{y_2, z_1, w_3\}$.
			Accordingly, $u$ has at most four neighbours in $H$.
			
			Now, if every vertex of $G$ has at most three neighbours in
			$H$, then we are done by double counting the edges between $V(H)$
			and $V(G)\setminus V(H)$: there are at most $3(n-15)$ such edges,
			and by the minimum degree condition, more than $15(n/5 - 3) =
			3(n-15)$ such edges, a contradiction. Therefore, we may assume that
			there is a vertex $v$ of degree $4$ in $H$. By the preceding
			analysis, we may assume that $y_1z_1$ and $y_1w_3$ are edges: if
			not, replace $y_1$ by $v$.

			The proof breaks into two cases from here:
			\begin{enumerate}[(a)]
				\item \label{itm:ind-6-cycle-subcase1}
					$z_3$ and $w_1$ have a common neighbour.
				\item \label{itm:ind-6-cycle-subcase2}
					$z_3$ and $w_1$ do not have a common neighbour.
			\end{enumerate}
					
			Assuming (\ref{itm:ind-6-cycle-subcase1}), let $w$ be a common
			neighbour of $z_3$ and $w_1$, and denote by $H''$ the $10$-vertex
			graph induced on $V(H)\setminus V(P_{14})$ (see the black vertices in
			\Cref{fig:forbid-ind-6-cycle-marked2}).
				
			\begin{figure}[h]
				\centering
				\includegraphics[scale = .85]{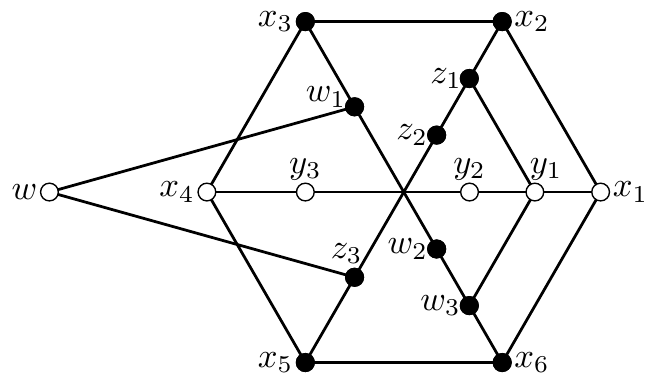}
				\caption{$H''$, with common neighbour $w$}
				\label{fig:forbid-ind-6-cycle-marked2}
			\end{figure}
		
			\begin{claim}\label{claim:ind-6-cycle-claim2}
				Every vertex of $G$ has at most two neighbours in $H''$.
			\end{claim}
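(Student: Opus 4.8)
The plan is to prove this as \Cref{claim:ind-6-cycle-claim1} was proved: fix an arbitrary vertex $u$ of $G$ and bound $|N(u, H'')|$ by a short case analysis, using only that $G$ is $\{C_3, C_5\}$-free together with the structure already extracted. The first step is to observe that the ten vertices of $H''$ carry a $10$-cycle $D = (z_1\,x_2\,x_3\,w_1\,w_2\,w_3\,x_6\,x_5\,z_3\,z_2)$, whose edges are exactly those of $P_{25}$ and $P_{36}$ together with the edges $x_2x_3$ and $x_5x_6$ of $C$. If $u$ had two neighbours on $D$ at distance $1$ or $3$ along $D$, a triangle or a $5$-cycle would appear; but this by itself allows $u$ as many as five neighbours on $D$ (every second vertex, say), so the real work is to exploit the short paths between vertices of $D$ that $D$ itself hides.

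The second step is to collect those hidden short paths. I would use the following five ``antipodal'' configurations of $D$: the length-$2$ paths $x_2 x_1 x_6$ and $x_3 x_4 x_5$, which are sub-paths of the $6$-cycle $C$; the length-$2$ path $z_1 y_1 w_3$, available because we are in Case~\ref{itm:ind-6-cycle-no-middle} with the edges $y_1z_1$ and $y_1w_3$ in hand; the length-$2$ path $z_3 w w_1$, available because $w$ is a common neighbour of $z_3$ and $w_1$; and the fact that $z_2$ and $w_2$ have \emph{no} common neighbour, as $z_2, w_2 \in \{y_2, z_2, w_2\}$. Prolonging each of the four length-$2$ paths by a single edge of $D$ at either end yields length-$3$ paths between many further pairs of vertices of $D$, hence (by $C_5$-freeness) further pairs of vertices of $D$ that cannot both be neighbours of $u$; the $z_2$--$w_2$ non-adjacency simply forbids $u$ from being joined to both of them.

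The third step is the case analysis. Both the reflection $i \mapsto -i$ and the rotation $i \mapsto i+5$ of $D$ permute the five configurations above among themselves, hence preserve every property used in the argument; under the group they generate the vertices of $D$ fall into three orbits, represented by $z_1$, $x_2$, and $w_2$. So it is enough to show that if $u$ is adjacent in $H''$ to one of these three, it has at most one further neighbour in $H''$. For $u \sim z_1$, sifting through the forbidden pairs leaves only $x_3$, $w_3$, $z_3$ as possible further neighbours in $H''$, and the three pairs among them are each joined by a length-$3$ path ($x_3 w_1 w_2 w_3$, $x_3 x_4 x_5 z_3$, $w_3 x_6 x_5 z_3$), so at most one of them is a neighbour of $u$; thus $|N(u, H'')| \le 2$. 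The case $u \sim x_2$ is the same with candidate set $\{w_1, x_6, z_2\}$ and length-$3$ paths $w_1 w_2 w_3 x_6$, $w_1 w z_3 z_2$, $x_6 x_5 z_3 z_2$, and $u \sim w_2$ is the same with candidate set $\{x_3, x_6\}$, length-$3$ path $x_3 x_4 x_5 x_6$, and the $z_2$--$w_2$ non-adjacency used to discard $z_2$. A vertex with no neighbour on $D$ has none in $H''$, so the claim follows (and then the usual double count of edges between $V(H'')$ and the rest of $G$, exactly as after \Cref{claim:ind-6-cycle-claim1}, completes \Cref{lem:induced-cycle} in this sub-case).

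The main obstacle I anticipate is bookkeeping rather than insight: one must check that the four externally visible length-$2$ paths and the single non-edge $z_2w_2$ really do block \emph{every} route to a third neighbour of $u$ in $H''$. All five ingredients are genuinely needed — dropping the path $z_1 y_1 w_3$ (i.e.\ forgetting that this sub-case comes equipped with a degree-$4$ vertex) already makes $\{z_1, x_3, w_2\}$ an admissible neighbourhood — so there is no shortcut via weaker hypotheses; the symmetry of $D$ is what keeps the check down to three essentially identical cases.
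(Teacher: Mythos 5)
Your route is genuinely different from the paper's (which splits on how many neighbours $u$ has in $\{x_2,x_3,x_5,x_6\}$ and kills the bad configurations with the two explicit $5$-cycles $(z_2z_3ww_1u)$ and $(w_2w_3y_1z_1u)$), and your three concrete verifications are correct: I checked the candidate sets $\{x_3,w_3,z_3\}$, $\{w_1,x_6,z_2\}$, $\{x_3,x_6\}$ and all the quoted $3$-paths, and they do what you claim. However, the symmetry step that is supposed to reduce everything to these three cases is wrong as stated. Label the $10$-cycle $D$ by positions $z_1=0, x_2=1, x_3=2, w_1=3, w_2=4, w_3=5, x_6=6, x_5=7, z_3=8, z_2=9$. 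The reflection $i\mapsto -i$ sends the pair $\{x_2,x_6\}=\{1,6\}$, which \emph{has} a common neighbour ($x_1$), onto $\{z_2,w_2\}=\{9,4\}$, which by the Case~\ref{itm:ind-6-cycle-no-middle} hypothesis has \emph{no} common neighbour; so it does not permute your five configurations or preserve the forbidden-pair structure. Worse, the group generated by $i\mapsto -i$ and $i\mapsto i+5$ has orbits $\{z_1,w_3\}$, $\{x_2,w_2,x_6,z_2\}$ and $\{x_3,w_1,x_5,z_3\}$, so your representatives $z_1,x_2,w_2$ are not one per orbit: $w_2$ duplicates $x_2$'s orbit and the orbit of $x_3$ (i.e.\ $x_3,w_1,x_5,z_3$) is never covered. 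As written, a vertex whose neighbours lie in that orbit is simply not treated.

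The fix is to use the other reflection, $i\mapsto 3-i$, i.e.\ the mirror swapping the two paths: $x_2\leftrightarrow x_3$, $x_5\leftrightarrow x_6$, $z_1\leftrightarrow w_1$, $z_2\leftrightarrow w_2$, $z_3\leftrightarrow w_3$. This one does preserve everything you use: it exchanges the $2$-paths $x_2x_1x_6$ and $x_3x_4x_5$, sends the pair $\{z_1,w_3\}$ (common neighbour $y_1$) to $\{w_1,z_3\}$ (common neighbour $w$) and back, and fixes the pair $\{z_2,w_2\}$; together with $i\mapsto i+5$ it generates a Klein four-group whose orbits are exactly $\{z_1,z_3,w_1,w_3\}$, $\{x_2,x_3,x_5,x_6\}$, $\{z_2,w_2\}$, so your three cases then do cover every vertex of $D$ and the proof goes through. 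Two smaller points to tidy: in the $w_2$-case, $z_2$ should be discarded by the no-common-neighbour hypothesis you listed (not by ``non-adjacency'', which you never established); the degenerate possibility $u=z_2$ is handled because $z_2\sim z_1$ and $\{w_2,z_1\}$ is a forbidden pair via $w_2w_3y_1z_1$. And when $u$ happens to be an interior vertex of one of the auxiliary $3$-paths, the $5$-cycle degenerates to a triangle, which still gives the contradiction, so the forbidden-pair principle indeed applies to every vertex of $G$ -- worth a sentence, since the double count needs the claim for vertices of $H''$ as well.
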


			\begin{proof}
				The proof is similar to that of \Cref{claim:ind-6-cycle-claim1}.
				Any vertex $u$ is adjacent to at most two vertices of $C\cap
			   H''$.  If $u$ is joined to $x_2$ and $x_6$, then its only
				other possible neighbour is $y_1$, but $y_1 \notin H''$.
				A similar statement holds if $u$ is joined to $x_3$ and $x_5$.
				
				Suppose now that $u$ has precisely one neighbour in $C\cap
				H''$, and suppose this neighbour is $x_2$. By our preceding
				analysis of possible neighbourhoods in $H$, $u$'s only other
				possible neighbours are $y_1, z_2,$ and $w_1$. However, $y_1
				\notin H''$ and $u$ cannot be joined to both $z_2$ and $w_1$:
				otherwise, the $5$-cycle $(z_2z_3ww_1u)$ is formed. Hence, $u$
				is joined to at most two vertices of $H''$.
				
				Similarly, if $u$ is a vertex whose only neighbour in $C\cap
				H''$ is $x_3$, then $u$'s only other possible neighbours are
				$y_3, w_2,$ and $z_1$. But $y_3 \notin H''$ and $u$ cannot be
				joined to both $w_2$ and $z_1$: otherwise the $5$-cycle
				$(w_2w_3y_1z_1u)$ is created. The other cases (i.e.,~$u$ joined
				to $x_5$ or $x_6$) are symmetric.
				
				Finally, suppose $u$ is a vertex with no neighbour in $C\cap
				H''$.  If $u$ is joined to a middle vertex, say, without loss
				of generality, $w_2$, then by assumption $u$ cannot be joined to
				$z_2$. Hence the only other possible neighbours of $u$ are $z_1$
				and $z_3$. But $u$ cannot be joined to $z_1$, since otherwise
				$(uw_2w_3y_1z_1)$ is a $5$-cycle in $G$. If $u$ is not joined to
				a middle vertex, then observe that it can be adjacent to at most
				one vertex from each pair $\{z_3, w_3\}$ and $\{z_1, w_1\}$.
				
				This completes the proof of \Cref{claim:ind-6-cycle-claim2}.
			\end{proof}
			
			Let us now assume (\ref{itm:ind-6-cycle-subcase2}), that $z_3$ and
			$w_1$ do not have a common neighbour. Denote by $H'''$ the
			$10$-vertex graph induced on $V(H)\setminus \{x_1, x_2, x_6, y_2,
			y_3\}$ (see the black vertices in \Cref{fig:forbid-ind-6-cycle-marked3}).
			\begin{figure}[h]
				\centering
				\includegraphics[scale = .85]{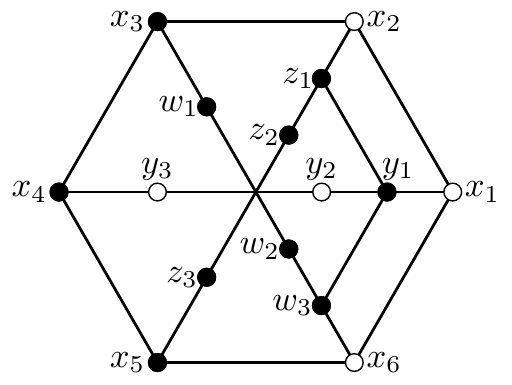}
				\caption{$H'''$}
				\label{fig:forbid-ind-6-cycle-marked3}
			\end{figure}   
			
			\begin{claim}\label{claim:ind-6-cycle-claim3}
				Every vertex of $G$ has at most two neighbours in $H'''$.
			\end{claim}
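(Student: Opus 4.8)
The plan is to argue exactly as in the proofs of \Cref{claim:ind-6-cycle-claim1} and \Cref{claim:ind-6-cycle-claim2}: assuming that some vertex $u \in V(G)$ has at least three neighbours in $H'''$, we exhibit a triangle or a pentagon in $G$. The ingredients we may use are the fact that $G$ is \cfree{}, the fact that $H$ is an induced subgraph of $G$, the classification of the possible neighbourhoods $N_G(u, H)$ carried out at the start of Case~\ref{itm:ind-6-cycle-no-middle} (together with the $6$-fold rotational symmetry of the configuration $H$, ignoring the two extra edges), and the hypotheses in force here: that $y_1 z_1$ and $y_1 w_3$ are edges, that no two of $y_2, z_2, w_2$ have a common neighbour, and --- crucially --- that $z_3$ and $w_1$ have no common neighbour.

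First I would split according to $|N_G(u) \cap \{x_3, x_4, x_5\}|$. Since $x_3 x_4, x_4 x_5 \in E(G)$, this number is at most $2$, and if it equals $2$ then $N_G(u) \cap \{x_3, x_4, x_5\} = \{x_3, x_5\}$; by the classification (applied after rotating $C$) the only further possible neighbour of $u$ in $H$ is $y_3$, which is not a vertex of $H'''$, so $u$ has exactly two neighbours in $H'''$. If $|N_G(u) \cap \{x_3, x_4, x_5\}| = 1$, say $u$ is joined to $x_i$, then the classification confines the remaining neighbours of $u$ in $H$ to a specific triple of interior vertices, precisely two of which belong to $H'''$ --- the excluded third vertex being $y_2$ when $i = 4$ and $y_3$ when $i \in \{3, 5\}$. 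These two vertices cannot both be neighbours of $u$: for $i = 4$ the pair is $\{z_3, w_1\}$, ruled out by hypothesis; for $i = 3$ and $i = 5$ the pair is $\{z_1, w_2\}$ and $\{z_2, w_3\}$ respectively, and in each case the two vertices are joined by a path of length $3$ using the new edges $y_1 z_1$ and $y_1 w_3$, so a common neighbour would close a $5$-cycle. Thus $u$ has at most two neighbours in $H'''$.

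It remains to handle the case $N_G(u) \cap \{x_3, x_4, x_5\} = \emptyset$, in which the neighbours of $u$ in $H'''$ lie in $\{y_1, z_1, z_2, z_3, w_1, w_2, w_3\}$. I would sub-split according to whether $u$ is adjacent to one of the middle vertices $z_2, w_2$ (by hypothesis to at most one) or to neither. If $u \sim z_2$, then $u \not\sim z_1, z_3$ (triangles), $u \not\sim y_2, w_2$ (the middle-vertex hypothesis), and $u \not\sim w_3$ (else $(u z_2 z_1 y_1 w_3)$ is a $5$-cycle), which leaves only $y_1$ and $w_1$; and these cannot both be neighbours of $u$, since then $(u y_1 w_3 w_2 w_1)$ would be a $5$-cycle. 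The case $u \sim w_2$ is handled by an entirely analogous chain of deductions (using the $5$-cycles $(u w_2 w_3 y_1 z_1)$ and $(u y_1 z_1 z_2 z_3)$). Finally, if $u$ is adjacent to no middle vertex, its neighbours in $H'''$ lie in $\{y_1, z_1, z_3, w_1, w_3\}$, and one checks that $y_1$ is incompatible with each of $z_1$ and $w_3$ (triangles) and with each of $z_3$ and $w_1$ (the pentagons $(u y_1 z_1 z_2 z_3)$ and $(u y_1 w_3 w_2 w_1)$), while among $\{z_1, z_3, w_1, w_3\}$ the pairs $\{z_1, w_1\}$ and $\{z_3, w_3\}$ are forbidden by the pentagons $(u z_1 x_2 x_3 w_1)$ and $(u z_3 x_5 x_6 w_3)$ and the pair $\{z_3, w_1\}$ by hypothesis. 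Since every $3$-element subset of $\{y_1, z_1, z_3, w_1, w_3\}$ contains one of these forbidden pairs, $u$ again has at most two neighbours in $H'''$.

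The routine part of the argument is checking the list of forbidden pairs and verifying that the asserted cycles really are cycles --- in particular that their five vertices are distinct, for which one uses that $u$ has no neighbour in $\{x_3, x_4, x_5\}$ to rule out $u \in \{x_2, x_3, x_5, x_6\}$ when a vertex of $C$ appears. I expect the only structurally delicate point to be that a single configuration --- $u$ adjacent to $x_4$ together with two ``opposite'' interior vertices --- survives all the generic arguments, and that it is precisely the subcase hypothesis that $z_3$ and $w_1$ have no common neighbour (which is why $H'''$ is defined so as to keep $x_4$ along with both $z_3$ and $w_1$) that eliminates it; everything else is taken care of by the removal of $y_2$ and $y_3$ (through which $y_1$ could otherwise gain a fourth neighbour in $H$) together with the many short paths created by the two edges $y_1 z_1$ and $y_1 w_3$.
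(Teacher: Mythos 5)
Your proof is correct and follows essentially the same route as the paper: the same case split on $|N(u)\cap\{x_3,x_4,x_5\}|$, the same use of the earlier neighbourhood classification, the pentagons through the new edges $y_1z_1$, $y_1w_3$, and the subcase hypothesis that $z_3$ and $w_1$ have no common neighbour. The only difference is that you spell out the case of no neighbour in $C\cap H'''$ (correctly accounting for $y_1\in H'''$), which the paper dismisses as analogous to \Cref{claim:ind-6-cycle-claim2}.
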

			
			\begin{proof}
				If a vertex $u$ of $G$ has two neighbours in $C\cap
				H'''$, then $u$ must be joined to $x_3$ and $x_5$. But $u$'s
				only other potential neighbour is $y_3$, and $y_3 \notin
				H'''$.
				
				Suppose $u$ is a vertex with exactly one neighbour in $C\cap
				H'''$.  First, suppose $u$ is joined to $x_3$. Then $u$'s only
				other possible neighbours are $w_2, y_3,$ and $z_1$. But $y_3
				\notin H'''$ and $u$ cannot be joined to both $z_1$ and $w_2$,
				as otherwise the $5$-cycle $(uw_2w_3y_1z_1)$ is in $G$.  The
				case when $u$ is joined to $x_5$ is dealt with symmetrically.
			
				Suppose now that $u$ is joined to $x_4$. The only other
				possible neighbours are then $y_2, z_3,$ and $w_1$. Observe
				that $y_2 \notin H'''$, and, by assumption, $z_3$ and $w_1$
				have no common neighbour, so $u$ is joined to at most one of
				them.
				
				One may (as in the proof of \Cref{claim:ind-6-cycle-claim2})
				dispense with the case when $u$ is a vertex with no neighbour
				in $C\cap H'''$.  Thus, no vertex of $G$ has more than
				two neighbours in $H'''$ and this completes the proof
				of \Cref{claim:ind-6-cycle-claim3}.
			\end{proof}
			
			We may now complete the proof of \Cref{lem:induced-cycle} in Case
			\ref{itm:ind-6-cycle-no-middle}. Indeed, if
			(\ref{itm:ind-6-cycle-subcase1}) holds, then apply
			\Cref{claim:ind-6-cycle-claim2} together with the usual double
			counting technique to produce a contradiction.  If instead
			(\ref{itm:ind-6-cycle-subcase2}) holds, then apply
			\Cref{claim:ind-6-cycle-claim3} together with double counting.
			This completes the proof of \Cref{lem:induced-cycle}. 	 
	\end{proof}

\section{$12$-cycles with few diagonals}\label{sec:12-cycles-few-chords}
    
	Our aim in this section is to prove \Cref{lem:cycle-chords}.  We divide the
	proof into steps, according to the number of diagonals.  Note that the case
	of having precisely five diagonals is immediate from
	\Cref{lem:induced-cycle} that forbids induced $6$-cycles.  It remains to
	examine the situation when there are either two, three or four
	diagonals present.
         
	\begin{prop} \label{prop:cycle-four-chords}
		\supposemax{}
		Then $G$ has no $12$-cycle with exactly four diagonals.
	\end{prop}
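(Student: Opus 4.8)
The plan is to run the double-counting scheme described at the end of \Cref{sec:overview}. Suppose for contradiction that $G$ has a $12$-cycle $C = (x_1 \ldots x_{12})$ with exactly four of its six diagonals present, and write $X = V(C)$. Rotating and reflecting $C$ permutes the six diagonals as a cyclic list of length six, so up to symmetry there are only three cases for the pair of \emph{missing} diagonals: (i) they are consecutive in this list; (ii) they are separated by exactly one diagonal; (iii) they are opposite.

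In cases (ii) and (iii) I claim that $G[X]$ contains an induced $6$-cycle, contradicting \Cref{lem:induced-cycle}. For instance, in case (iii) say the missing diagonals are $x_3x_9$ and $x_6x_{12}$ while $x_1x_7$, $x_2x_8$, $x_4x_{10}$, $x_5x_{11}$ are present; then $x_1, x_7, x_6, x_5, x_{11}, x_{12}$ span a $6$-cycle, and each of its nine potential chords is a chord of $C$ at distance $2$ (creating a triangle with one cycle edge), at distance $4$ (creating a $C_5$ with four cycle edges), at distance $5$ (in which case it closes a triangle with a cycle edge and one of the present diagonals), or is one of the two missing diagonals — so it is absent, and the $6$-cycle is induced. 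Case (ii) is handled identically, using the $6$-cycle on $x_3, x_9, x_{10}, x_{11}, x_5, x_4$.

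Case (i) is the crux: here $G[X]$ has no induced $6$-cycle, so more is needed. Say $x_5x_{11}$ and $x_6x_{12}$ are missing and $x_1x_7, x_2x_8, x_3x_9, x_4x_{10}$ are present. I would first check that $G[X]$ consists of precisely the cycle $C$ together with these four diagonals: every other candidate chord has distance $2$, $3$ or $4$ and is forbidden outright (noting that a distance-$3$ chord always completes a $C_5$ through one of the present diagonals), or has distance $5$, in which case it is $x_6x_{11}$ or $x_5x_{12}$, and these complete the $5$-cycles $(x_6\,x_{11}\,x_{12}\,x_1\,x_7)$ and $(x_5\,x_{12}\,x_{11}\,x_{10}\,x_4)$ respectively. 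It follows that, for every $u \in V(G)$, the set $N(u,X)$ is independent in $G[X]$ and contains no two vertices at distance $3$ in $G[X]$ (such a pair, joined by a length-$3$ path, would give a $C_5$ through $u$); a short case analysis on $G[X]$ then shows $|N(u,X)| \le 3$. This is not quite enough: counting the edges between $X$ and $V(G)\setminus X$ yields a contradiction only when $|X| \ge 5\max_u|N(u,X)|$, which fails for $|X| = 12$ and the bound $3$.

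To finish I would enlarge $X$. One checks that $x_5$ and $x_{11}$ have no common neighbour (a common neighbour $v$ would give the $5$-cycle $(v\,x_5\,x_4\,x_{10}\,x_{11})$), and similarly $x_6$ and $x_{12}$; so edge-maximality provides $4$-paths $x_5 p_1 p_2 p_3 x_{11}$ and $x_6 q_1 q_2 q_3 x_{12}$ with (one checks) six new, distinct internal vertices. Setting $H = X \cup \{p_1,p_2,p_3,q_1,q_2,q_3\}$, a set of size $15$, I would show by the same style of neighbourhood analysis — now also using $C_5$-freeness against the appended paths, and calling on \Cref{lem:induced-cycle} and \Cref{lem:two-cycles} to eliminate the densest configurations (for example, a vertex whose neighbourhood in $X$ is $\{x_1, x_6, x_{11}\}$ produces an induced $6$-cycle on it together with $x_6, x_5, x_4, x_{10}, x_{11}$) — that every vertex of $G$ has at most $3$ neighbours in $H$; then double counting the edges between $H$ and $V(G)\setminus H$ gives more than $15(n/5-3) = 3(n-15)$ from one side and at most $3(n-15)$ from the other, a contradiction. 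The main obstacle is precisely this last step: one must show, for each symmetry-type of potential $3$-element neighbourhood (first in $X$, then in $H$), either that it cannot occur or that it forces a configuration already known to be absent.
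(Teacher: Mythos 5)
Your symmetry reduction to the case of two consecutive missing diagonals, and the observation that $G[X]$ then consists of exactly the $12$-cycle plus the four diagonals, match the paper's opening moves. The trouble is the step you yourself flag as the main obstacle, and it is compounded by problems with the tools you propose for it. First, \Cref{lem:two-cycles} is not available here: its proof (\Cref{sec:7-cycles-3-path}) uses \Cref{cor:vx-cycle-two-neighs}, which rests on \Cref{lem:cycle-chords}, which in turn is proved via the very proposition at hand, so invoking it is circular; only \Cref{lem:induced-cycle}, \cfree{}-ness, edge-maximality and the minimum degree may be used. Second, with those tools alone the claimed bound that every vertex of $G$ has at most three neighbours in the $15$-vertex set $H$ does not follow: a vertex $u$ with $N(u,X)=\{x_1,x_6,x_8\}$ is permitted (this is $N_X(x_7)$; $G[X]$ is only well-behaved), and if $u$ is in addition adjacent to $p_1$, no triangle, no $5$-cycle and no induced $6$-cycle arises among the edges you have actually established --- every $6$-cycle through $u$ one can form, e.g.\ $(u\,x_1\,x_2\,x_8\,x_7\,x_6)$ or $(u\,x_1\,x_7\,x_6\,x_5\,p_1)$, carries one of the chords $ux_8$, $ux_6$ --- so nothing at your disposal excludes four neighbours in $H$. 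Third, the six internal path vertices need not be new or distinct: as the paper notes for a single such path, $p_1$ may coincide with $x_6$ and $p_3$ with $x_{12}$ (similarly for the $q$-path), and once $|H|<15$ the comparison of $|H|(n/5-3)$ with $3(n-|H|)$ no longer produces a contradiction.

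The paper closes exactly this hole with an ingredient your outline lacks: it uses the minimum-degree hypothesis \emph{before} the final count, not only in it. Since $x_3,x_6,x_9,x_{12}$ have pairwise disjoint neighbourhoods, every vertex has a common neighbour with one of them; applying this to $y_2$ and $y_3$ of a single $4$-path $x_5y_1y_2y_3x_{11}$ and massaging with \Cref{lem:induced-cycle}, one may assume (after replacing $y_1,y_2$ if necessary) that $y_1x_3$ and $y_2x_2$ are edges. These two extra edges are precisely what kill configurations like the one above (now $u$ adjacent to both $x_1$ and $y_1$ yields the $5$-cycle $(u\,x_1\,x_2\,x_3\,y_1)$), and they make it possible to pass to the $10$-vertex graph on $\{x_5,\ldots,x_{12},y_1,y_2\}$ in which every vertex of $G$ has at most two neighbours, where the count $10(n/5-2)>2(n-10)$ closes; note also that only $y_1,y_2$ are used, which is how the paper sidesteps the possible coincidence $y_3=x_{12}$. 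Without some such augmentation step your $15$-vertex plan cannot be completed as written, so there is a genuine gap.
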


	\begin{proof}

		Suppose that $(x_1 \ldots x_{12})$ is a $12$-cycle with exactly four
		diagonals.  Let $H$ be the graph induced by $\{x_1, \ldots, x_{12}\}$.
		In light of \Cref{lem:induced-cycle}, $G$ has no induced $6$-cycle, so
		we may assume that the edges $x_1 x_7, x_2 x_8, x_3 x_9, x_4 x_{10}$
		are present in the graph and that $x_5 x_{11}, x_6 x_{12}$ are
		non-edges. In fact, it is easy to verify that the only edges in $H$ are
		the edges of the $12$-cycle and these four diagonals.

	 	\begin{figure}[h]
			\centering
			\includegraphics[scale = .8]{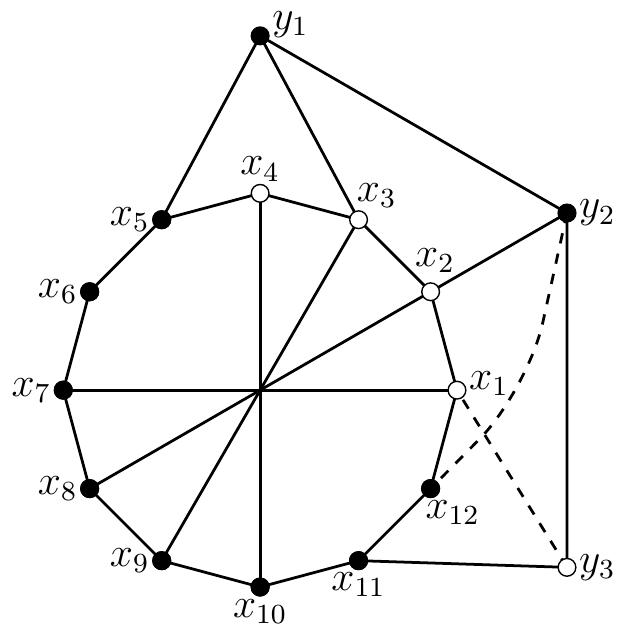}
			\caption{Constructing the graph $H'$ in the case of four diagonals}
			\label{fig:four-chords-added-vs-edges}
		\end{figure}
		
		The pair $\{x_5,  x_{11}\}$ is a non-edge in $G$, and so there is a
		path of length $2$ or $4$ between $x_5$ and $x_{11}$. In fact,
		the length must be $4$ because, otherwise, a cycle of length $3$
		or $5$ will be created.  Let $x_5 y_1 y_2 y_3 x_{11}$ be this
		$4$-path.  One may verify that $y_2 \notin H$, and possibly $y_3 =
		x_{12}$ or $y_1 = x_6$, but not both. We shall assume, without loss of
		generality, that $y_1 \neq x_6$.

		\begin{claim} \label{claim:edge-y1-x4}
			We may assume that $y_1 x_3$ and $y_2x_2$ are edges of $G$.
		\end{claim}

		\begin{proof}
			No two of the following vertices have a common neighbour: $x_3,
			x_6, x_9, x_{12}$ (they are at distance one or three from each
			other).  In other words, their neighbourhoods are pairwise
			disjoint, and so, by the minimum degree condition, every vertex in
			$G$ has a common neighbour with at least one of these four
			vertices.  Note that $y_2$ does not have a common neighbour with
			either $x_6$ or $x_{12}$ (this will create a $C_5$). By symmetry,
			we may assume that $y_2$ and $x_3$ have a common neighbour $u$.  If
			$u = y_1$, \Cref{claim:edge-y1-x4} follows.  Thus, we suppose
			otherwise.  Consider the $6$-cycle $(u y_2 y_1 x_5 x_4 x_3)$.
			Since there are no induced $6$-cycles, one of the following is an
			edge: $y_1 x_3, y_2 x_4, u x_5$.  If $y_1 x_3$ is an edge, the
			claim follows; $y_2 x_4$ cannot be an edge (because of the
			$5$-cycle $(y_2 x_4 x_{10} x_{11} y_3)$); if $u x_5$ is an edge, we
			replace $y_1$ by $u$ to prove the first part of the Claim.
			
			To see the second part, by considering the neighbours of $x_2, x_5,
			x_8, x_{11}$, we have that $y_3$ has a common neighbour with $x_2$
			or $x_8$.  If $u$ is a common neighbour of $y_3$ and $x_2$, we may
			assume that $u \neq y_2$ (otherwise, we are done).  By
			considering the $6$-cycle $(u x_2 x_3 y_1 y_2 y_3)$, either $y_2
			x_2$ or $u y_1$ is an edge.  We may assume that $u y_1$ is an edge.
			Then, by replacing $y_2$ by $u$ we obtain the required property.
			Now suppose that $y_3$ and $x_8$ have a common neighbour $u$. By
			considering $(u x_8 x_9 x_{10} x_{11} y_3)$, $u$ is adjacent to
			$x_{10}$. This, in turn, implies that $u$ is adjacent to $x_3$ (see
			$(u x_8 x_2 x_3 x_4 x_{10})$), a contradiction: the $5$-cycle $(u
			x_3 y_1 y_2 y_3)$ is formed.
		\end{proof}                
			
		Denote by $H'$ the graph induced by $\{x_5, \dots, x_{12}, y_1, y_2\}$
		(see the black vertices in \Cref{fig:four-chords-added-vs-edges}). We
		shall show that every vertex of $G$ has few neighbours in $H'$,
		yielding a contradiction to the minimum degree condition on $G$. More
		precisely, we have the following:

		\begin{claim} \label{claim:four-chords-few-neighs}
			Every vertex of $G$ has at most two neighbours in $H'$.
		\end{claim}
		\begin{proof}
			We first prove that $H$ is well-behaved. First, note that no vertex
			$u$ in $G$ can be adjacent to all of $\{x_4, x_6, x_{11}\}$.
			Indeed, otherwise, $(u x_{11} x_{12} x_1 x_7 x_6)$ is an induced
			$C_6$ (the addition of any chord to this cycle creates a triangle
			or a pentagon), contradicting \Cref{lem:induced-cycle}.  By
			symmetry, no vertex can be adjacent to all vertices in one of the
			following sets: $\{x_5, x_7, x_{12}\}$, $\{x_1, x_{6}, x_{11}\}$,
			$\{x_5, x_{10}, x_{12}\}$.  We conclude that no vertex can be
			adjacent to both $x_6$ and $x_{11}$. Indeed, by considering the
			$6$-cycle $(x_1 x_7x_6ux_{11}x_{12})$, since there is no induced
			$C_6$ in $G$, $u$ must be adjacent to $x_1$, contradicting the
			above.  Similarly, no vertex is adjacent to both $x_5$ and
			$x_{12}$.  One may check that any other possible neighbourhood of a
			vertex of $G$ in $H$ is contained in the neighbourhood of a vertex
			in $H$.
						   
			Now, as $H$ is well-behaved, no vertex in $G$ has more than two
			neighbours in $H' \cap H$.  Thus, if a vertex $u$ has three
			neighbours in $H'$, at least one of them is either $y_1$ or $y_2$.
			If $u$ is adjacent to $y_1$, then the only other neighbours $u$ can
			have in $H'$ are $x_6, x_9, x_{12}$, but no two of these vertices
			may have a common neighbour.  Similarly, if $u$ is adjacent to
			$y_2$, its other possible neighbours in $H'$ are $x_5,x_8, x_{11}$,
			no two of which have a common neighbour. The Claim follows.
		\end{proof}

		Using \Cref{claim:four-chords-few-neighs}, we may now finish the
		proof of \Cref{prop:cycle-four-chords} by double counting the number
		of edges between $V(H')$ and $V(G) \setminus V(H')$, as usual.
	\end{proof}

	Now we deal with the remaining case, of a $12$-cycle with two or three
	diagonals, and thereby complete the proof of \Cref{lem:cycle-chords}.

    \begin{prop} \label{prop:cycle-two-three-chords}
        \supposemax{}
		Then $G$ induces no $12$-cycle with two consecutive diagonals and
		at most one additional chord.
    \end{prop}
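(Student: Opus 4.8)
The plan is to argue by contradiction and use the same double-counting scheme as in the proof of \Cref{prop:cycle-four-chords}. Suppose $G$ contains such a configuration on vertices $x_1, \ldots, x_{12}$, with the $12$-cycle $(x_1 \ldots x_{12})$, the two consecutive diagonals $x_1 x_7$ and $x_2 x_8$, and at most one further chord. Since a chord of a $12$-cycle in a \cfree{} graph must have length $3$, $5$ or $6$, and the presence of $x_1x_7$ and $x_2x_8$ forbids most of the short ones, the possibilities for the extra chord (if any) are few; combined with the reflection symmetry $x_i \mapsto x_{3-i}$ of the base configuration, this leaves a handful of cases, according to whether the extra chord is a third diagonal (adjacent to the given pair, i.e.\ $x_3x_9$ or $x_6x_{12}$, or separated from it, i.e.\ $x_4x_{10}$ or $x_5x_{11}$) or a chord of length $3$ or $5$.

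In each case I would enlarge the configuration using edge-maximality. Unlike the four-diagonal situation, the $12$-cycle together with so few chords is not \wellbehaved{} in $G$, so there is no immediate bound on the number of neighbours an arbitrary vertex has in it; the added structure is needed both to create such bounds and to make the target $10$-vertex set behave. Concretely, one observes that certain ``diagonal-position'' non-edges have no common neighbour — a common neighbour of $x_3$ and $x_9$ would create a $5$-cycle through $x_2x_8$, and similarly for $x_6$ and $x_{12}$ through $x_1x_7$ — so by maximality there is a $4$-path $x_3 a_1 a_2 a_3 x_9$ (respectively $x_6 b_1 b_2 b_3 x_{12}$), and one checks, exactly as in the proof of \Cref{prop:cycle-four-chords}, that the middle vertex avoids $\{x_1, \ldots, x_{12}\}$ and that the only possible coincidences at the endpoints can be excluded or normalised by symmetry. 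The point of adding one or two such paths is that, together with the two given diagonals, they place every ``dangerous'' triple of vertices of the eventual set $H'$ on a common $7$-cycle; by \Cref{prop:well-behaved} every vertex of $G$ has at most two neighbours in any $7$-cycle, so no vertex can be joined to all three vertices of such a triple. After this, I would choose $H'$ to be a suitable $10$-vertex induced subgraph of the enlarged configuration — typically a run of the $12$-cycle together with the interior vertices of an added $4$-path, inducing a path or a single cycle — and prove, by the usual case analysis over the possible neighbourhoods (invoking $\{C_3,C_5\}$-freeness and, when a putative neighbourhood would close up an induced $6$-cycle, \Cref{lem:induced-cycle}), that every vertex of $G$ has at most two neighbours in $H'$. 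Counting the edges between $V(H')$ and $V(G) \setminus V(H')$ then gives more than $10(n/5 - 2) = 2(n-10)$ such edges from the minimum-degree condition but at most $2(n-10)$ from the neighbour bound, a contradiction; a few residual cases, if any, are instead disposed of directly by \Cref{lem:induced-cycle} or by \Cref{prop:cycle-four-chords} once the added path has forced the appearance of four diagonals.

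The step I expect to be hardest is the ``at most two neighbours in $H'$'' claim, and in particular ruling out a vertex of $G$ adjacent to three vertices of $H'$ that lie pairwise at distance two along the $12$-cycle (such as $x_6, x_8, x_{10}$): these are not controlled by the $12$-cycle or the two given diagonals alone, and it is precisely the need to place every such triple on a common $7$-cycle that dictates which non-edges to fill with $4$-paths and which $10$ vertices to retain in $H'$. The remainder is routine but bookkeeping-heavy: matching a single choice of added path(s) and of $H'$ to as many of the chord-cases as possible, and verifying the neighbourhood analysis case by case, in the style of the claims in the proof of \Cref{lem:induced-cycle} and of \Cref{claim:four-chords-few-neighs}.
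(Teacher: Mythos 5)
Your proposal stalls exactly where the content of the proposition lies. The reductions at the start are fine (the extra chord, if any, can be taken to be a diagonal of one of the two $12$-cycles, and $x_5x_{11}$ is killed by \Cref{lem:induced-cycle}), and the observation that $x_3,x_9$ (resp.\ $x_6,x_{12}$) have no common neighbour, so that edge-maximality supplies a $4$-path, is correct. But the claim that every vertex of $G$ has at most two neighbours in a suitable $10$-vertex subgraph is precisely what you leave unproven: you do not fix which paths to add, which ten vertices to retain, or why the bound holds, and you explicitly defer the hardest case. Moreover, the mechanism you propose for it does not visibly work: a $4$-path $x_3a_1a_2a_3x_9$ creates the $7$-cycle $(x_3a_1a_2a_3x_9x_8x_2)$, which controls triples meeting $\{x_2,x_3,x_8,x_9,a_1,a_2,a_3\}$, but it does nothing to place a triple such as $x_6,x_8,x_{10}$ --- the very example you flag --- on a common $7$-cycle. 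With only two or three chords the configuration is too sparse for well-behavedness of $7$-cycles alone to give the two-neighbour bound, so as written this is a plan with a genuine gap rather than a proof.

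The paper's argument avoids the difficulty altogether, and you missed its key organisational trick: treat the three-chord case first (extra chord $x_6x_{12}$), and in the two-chord case assume additionally that \emph{no} $12$-cycle in $G$ has two consecutive diagonals plus an extra chord. Then take $H'=H\setminus\{x_1,x_7\}$, with no added paths at all. If some vertex $u$ has three neighbours in $H'$, then (using the rotation by six and \Cref{prop:well-behaved} applied to the $7$-cycle $(x_1\ldots x_7)$) $u$ is adjacent to $x_{i-1},x_{i+1}$ for some $i\in\{3,4,5\}$ and to some $x_j$ with $j\in\{8,\ldots,12\}$; substituting $u$ for $x_i$ produces a $12$-cycle with the same two consecutive diagonals and strictly more chords, contradicting \Cref{prop:cycle-four-chords} in the three-chord case, or the standing assumption in the two-chord case. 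The double counting then finishes as you intended. To repair your write-up you would need either this replacement/reduction idea or a concrete choice of added paths and $10$-set together with a complete neighbourhood analysis; neither is present in the proposal.
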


    \begin{proof}
		Suppose that $C = (x_1 \ldots x_{12})$ is a $12$-cycle with two
		consecutive diagonals $x_1 x_7$ and $x_2 x_8$, and at most one
		additional chord.  We note that any additional chord (that does not
		complete a triangle or $5$-cycle) is a diagonal in one of the following
		$12$-cycles $(x_1 \ldots x_{12})$ or $(x_2 \ldots x_7 x_1 x_{12} \ldots
		x_8)$, both of which have two consecutive diagonals.  Hence, and by
		symmetry, we may assume that the additional chord is either $x_6
		x_{12}$ or $x_5 x_{11}$.  However, if $x_5 x_{11}$ is the additional
		chord, then $(x_1 x_7 x_6 x_5 x_{11} x_{12})$ is an induced $6$-cycle,
		contradicting \Cref{lem:induced-cycle}.  Thus we assume that, if there
		is an additional chord, it is $x_6 x_{12}$. Furthermore, if $x_6x_{12}$
		is not an edge, we assume that $G$ contains no $12$-cycles with two
		consecutive diagonals and exactly one extra chord.  Let $H$ be the
		graph induced on $\{x_1, \ldots, x_{12}\}$ and denote $H' = H \setminus
		\{x_1, x_7\}$.
        
        \begin{claim} \label{claim:two-three-chords-H'}
            Every vertex in $G$ has at most two neighbours in $H'$.
        \end{claim}

        \begin{proof}
           
			Suppose that $u$ has three neighbours in $H'$.  It follows by
			symmetry that $u$ has two neighbours in $\{x_2, \ldots, x_6\}$,
			which we can denote by $x_{i - 1}$ and $x_{i + 1}$ for some $i \in
			\{3, 4, 5\}$ (by \Cref{prop:well-behaved}), and another neighbour
			$x_j$ for some $j \in \{8, \ldots, 12\}$.  But then, by replacing
			$x_i$ by $u$, we may assume that $x_i$ is joined to $x_j$. This is
			a contradiction: either to \Cref{prop:cycle-four-chords} (if $C$
			had three chords, i.e.~if $x_6 x_{12}$ is an edge, then now it has
			four chords); or, if $x_6 x_{12}$ is not an edge, to the assumption
			that there is no $12$-cycle with two consecutive diagonals and an
			additional chord.
        \end{proof}

		\Cref{prop:cycle-two-three-chords} follows from
		\Cref{claim:two-three-chords-H'} by double counting the number of edges
		between $V(H')$ and $V(G) \setminus V(H')$. The proof of
		\Cref{lem:cycle-chords} is therefore complete.
    \end{proof}
    
\section{Two $7$-cycles intersecting in a $3$-path}\label{sec:7-cycles-3-path}

    \begin{figure}[h]
        \centering
        \includegraphics[scale = 1]{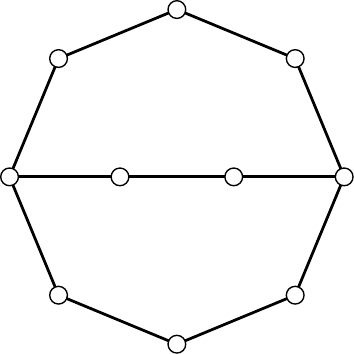}
        \caption{two $7$-cycles intersecting in a $3$-path}
        \label{fig:two-cycles}
    \end{figure}
	In this section we prove \Cref{lem:two-cycles}; that is, the graph in
	\Cref{fig:two-cycles} cannot appear as an induced subgraph of a maximal
	\cfree{} graph on $n$ vertices with minimum degree larger than $n/5$.
   
    \begin{proof}[ of \Cref{lem:two-cycles}]
		Suppose that $H$ is an induced subgraph of $G$ which is the union of
		two $7$-cycles intersecting in a path of length $3$.  Denote the two
		$7$-cycles by $(x_1 x_2 x_3 x_4 x_5 x_6 x_7)$ and $(x_1 x_2 x_3 x_4 x_8
		x_9 x_{10})$ (see \Cref{fig:two-cycles}).  We start by showing that $H$
		is a \wellbehaved{} subgraph of $G$ (recall \Cref{def:well-behaved}), a
		fact that will be useful in the proof.
         
        \begin{claim} \label{claim:two-cycles-well-behaved}
            The graph $H$ is \wellbehaved.
        \end{claim}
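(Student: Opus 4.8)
The goal is to show that the induced subgraph $H$ — two $7$-cycles sharing the $3$-path $x_1 x_2 x_3 x_4$, on vertex set $\{x_1,\dots,x_{10}\}$ — is well-behaved in $G$, i.e.\ that for every $u \in V(G)$ there is a vertex $v \in V(H)$ with $N_G(u,H) \subseteq N_H(v)$. The strategy is to fix an arbitrary $u$, enumerate the possibilities for $N_G(u,H)$, and in each case exhibit a dominating vertex $v$. The main engine is that $G$ is $\{C_3,C_5\}$-free, which severely restricts which vertices of $H$ can lie in a common neighbourhood, and \Cref{lem:induced-cycle} (no induced $6$-cycle), which we invoke whenever the $\{C_3,C_5\}$-free condition alone is not enough to kill a configuration.

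First I would record the ``local'' structure: $H$ contains the two $7$-cycles $C = (x_1\ldots x_7)$ and $C' = (x_1 x_2 x_3 x_4 x_8 x_9 x_{10})$, and — after checking that no chords of $H$ are forced or permitted beyond these — the only edges of $H$ are the $3$-path $x_1x_2x_3x_4$, the two ``arcs'' $x_4 x_5 x_6 x_7 x_1$ and $x_4 x_8 x_9 x_{10} x_1$. Note that $\{x_5,\dots,x_{10}\}$ together with $x_1,x_4$ behaves symmetrically under swapping the two arcs, and there is a further symmetry reversing each $7$-cycle (i.e.\ $x_2\leftrightarrow x_3$, $x_1\leftrightarrow x_4$, $x_7\leftrightarrow x_5$, $x_6$ fixed, and similarly on the primed side); I would use these symmetries to cut the casework roughly in four.

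Next, for $u$ adjacent to one of the ``path'' vertices, say $u x_1 \in E(G)$: a neighbour of $u$ in $H$ cannot be at distance $1$ or $2$ from $x_1$ in $H$ (that would create a $C_3$ or $C_5$ through $u$), which already forbids $u$ from being adjacent to $x_2, x_7, x_{10}, x_3, x_6, x_9$; the only candidates left are $x_4, x_5, x_8$, and one checks $\{x_4,x_5\}$, $\{x_4,x_8\}$, $\{x_5,x_8\}$ each force a short odd cycle or an induced $6$-cycle (the last one via the cycle $(u x_5 x_6 x_7 x_1)$ / $(u x_5 x_4 x_8 \dots)$ type analysis, finishing with \Cref{lem:induced-cycle}). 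So $N_G(u,H)$ is a single path-vertex, possibly together with $x_4$ — but $x_4$ is not adjacent to $x_1$ in $H$, so in fact $N_G(u,H) \subseteq \{x_1\}$ or we land in a contradiction; either way it is contained in $N_H(\text{some vertex})$. The symmetric case is $u x_2$, where the candidates are even more restricted. The remaining and most delicate case is $u$ with neighbours only in $\{x_5,x_6,x_7,x_8,x_9,x_{10}\}$ (the ``outer'' vertices): here I would first use $\{C_3,C_5\}$-freeness to show $u$ has at most one neighbour on each arc $\{x_5,x_6,x_7\}$, $\{x_8,x_9,x_{10}\}$, and that a neighbour on one arc restricts the neighbour on the other — e.g.\ $u x_6, u x_9 \in E$ gives the $6$-cycle $(u x_6 x_5 x_4 x_8 x_9)$... wait, that is only a $6$-cycle if it is induced, so invoke \Cref{lem:induced-cycle}; a chord of that cycle would be $x_5 x_8$ (non-edge), $u x_5$ or $u x_8$ (reduces to an already-handled sub-case) — and conclude $N_G(u,H)$ is contained in $N_H(x_1)=\{x_2,x_7,x_{10}\}$ or $N_H(x_4)=\{x_3,x_5,x_8\}$ or similar.

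\textbf{The main obstacle} I anticipate is precisely this last case: ruling out a vertex $u$ adjacent to, say, $x_6$ and $x_9$ (or $x_5$ and $x_{10}$, etc.), where $\{C_3,C_5\}$-freeness does not immediately apply and one must carefully produce an \emph{induced} $6$-cycle to invoke \Cref{lem:induced-cycle}, checking that every potential chord of that $6$-cycle is either a non-edge of $G$ (because it would be a chord of $H$ we have already excluded) or leads back to a previously-resolved configuration. The bookkeeping of which chords are possible, given the exact edge set of $H$, is where care is needed; the symmetries should keep the number of genuinely distinct sub-cases small (essentially: same arc vs.\ opposite arcs, and ``middle'' vertex $x_6$/$x_9$ vs.\ ``end'' vertices).
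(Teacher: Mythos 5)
Your overall reduction is sound: using \Cref{prop:well-behaved} on each of the two $7$-cycles, the only way $H$ can fail to be \wellbehaved{} is (up to the symmetries you describe) a common neighbour $u$ of $x_6$ and $x_9$, or of $x_5$ and $x_{10}$, and the first of these does fall to the tools you list (from the $6$-cycles $(u x_6 x_7 x_1 x_{10} x_9)$ and $(u x_6 x_5 x_4 x_8 x_9)$ the only admissible chords are $u x_1$ and $u x_4$ — note your chord list omits $u x_4$, which is exactly the edge you need — and then $(u x_1 x_2 x_3 x_4)$ is a $5$-cycle). However, there is a genuine gap in the remaining case, which you flag as delicate but claim can be finished by ``carefully producing an induced $6$-cycle'': it cannot. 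If $u$ is adjacent to $x_5$ and $x_{10}$, the two $6$-cycles $(u x_5 x_4 x_8 x_9 x_{10})$ and $(u x_5 x_6 x_7 x_1 x_{10})$ force only the chords $u x_8$ and $u x_7$, and the resulting configuration — $u$ joined to exactly $\{x_5, x_7, x_8, x_{10}\}$ — contains no triangle, no $5$-cycle, and no induced $6$-cycle (every $6$-cycle through $u$ picks up one of the chords $u x_5, u x_7, u x_8, u x_{10}$, and $H$ itself has no $6$-cycle). So $\{C_3,C_5\}$-freeness together with \Cref{lem:induced-cycle} is simply not enough to derive a contradiction here, and no amount of further local case analysis with those two tools will close it.

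The paper's proof kills this case with the heavier machinery you never invoke: after forcing $u x_8$, it considers the $7$-cycle $(u x_{10} x_1 x_2 x_3 x_4 x_8)$, in which $x_6$ has no neighbour while its neighbour $x_5$ has two ($x_4$ and $u$), contradicting \Cref{cor:vx-cycle-two-neighs} — which in turn rests on \Cref{lem:cycle-chords} about $12$-cycles with consecutive diagonals. Any correct proof of \Cref{claim:two-cycles-well-behaved} needs this (or some comparably strong) input beyond forbidden $C_3$, $C_5$ and induced $C_6$; your plan as stated would stall precisely at the case you identified as the main obstacle.
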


        \begin{proof}
			If $H$ is not \wellbehaved{}, then, up to relabelling, one of the
			following two pairs has a common neighbour in $G$: $\{x_6, x_9\}$
			or $\{x_5, x_{10}\}$.  If $u$ is a neighbour of $x_6$ and $x_9$
			then, by \Cref{lem:induced-cycle}, $u$ is also a neighbour of $x_1$
			(consider the $6$-cycle $(u x_6 x_7 x_1 x_{10} x_9)$), and of $x_4$
			(consider the $6$-cycle $(ux_6x_5x_4x_8x_9)$). But this produces
			the $5$-cycle $(ux_1x_2x_3x_4)$.  Now suppose that $u$ is a
			neighbour of both $x_5$ and $x_{10}$.  By considering the $6$-cycle
			$(u x_5 x_4 x_8 x_9 x_{10})$, $u$ must be adjacent to $x_8$.  Now
			consider the $7$-cycle $(u x_{10} x_1 x_2 x_3 x_4 x_8)$.  The
			vertex $x_6$ has no neighbour in $C$ ($x_6$ cannot be adjacent to
			$u$), but $x_5$ has two neighbours in $C$ ($x_4$ and $u$). This is
			a contradiction to \Cref{cor:vx-cycle-two-neighs}.
        \end{proof}
		Arguments as in \Cref{claim:two-cycles-well-behaved}, using
		\Cref{cor:vx-cycle-two-neighs} and \Cref{lem:induced-cycle} will appear
		frequently in the proof of \Cref{lem:two-cycles}.

		Since $x_6$ and $x_8$ are nonadjacent, there is a $4$-path with ends
		$x_6$ and $x_8$ (a $2$-path would create a $C_5$).  Up to relabelling,
		three cases arise:
        \begin{enumerate}
            \item \label{itm:three-path-vertical}
                There is a $3$-path $x_6 y_1 y_2 x_9$ between $x_6$ and
                $x_9$. The vertices $y_1$ and $y_2$ are not in $H$.
            \item \label{itm:three-path-diagonal}
                There is a $3$-path $x_7 y_1 y_2 x_8$ between $x_7$ and
                $x_8$. The vertices $y_1$ and $y_2$ are not in $H$.
            \item \label{itm:four-path}
                There is a $4$-path $x_6 y_1 y_2 y_3 x_8$ between $x_6$ and
                $x_8$. The vertices $y_1, y_2, y_3$ are not in $H$.
        \end{enumerate}

        In the rest of the proof, we show that each of the three cases is
        impossible, thus completing the proof of \Cref{lem:two-cycles}. Case
        \ref{itm:three-path-diagonal} will be the most difficult to resolve.

		\subsection{Case \ref{itm:three-path-vertical}: \normalfont a $3$-path
			between $x_6$ and $x_9$}

            Denote by $H'$ the graph induced by $\{x_1, \dots, x_{10}, y_1,
            y_2\}$.

            \begin{claim} \label{claim:case-one-well-behaved}
                $H'$ is \wellbehaved.
            \end{claim}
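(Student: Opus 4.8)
The plan is to exploit the fact, just established, that $H$ is \wellbehaved{} (\Cref{claim:two-cycles-well-behaved}), and to check that adjoining $y_1$ and $y_2$ does not spoil this. First I would pin down the edges of $H'$: since $G$ is $\{C_3,C_5\}$-free, a routine inspection of the triangles and $5$-cycles through the path $x_6 y_1 y_2 x_9$ and the edges of $H$ yields $N_{H'}(y_1)=\{x_6,y_2\}$ and $N_{H'}(y_2)=\{y_1,x_9\}$, so that $H'$ is just $H$ with a pendant path $x_6 y_1 y_2 x_9$; in particular $N_{H'}(x_6)=\{x_5,x_7,y_1\}$, $N_{H'}(x_9)=\{x_8,x_{10},y_2\}$, and every other $H'$-neighbourhood coincides with the corresponding $H$-neighbourhood. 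I would also record two symmetries of the Case~\ref{itm:three-path-vertical} configuration, each of which I will use to shorten the case analysis: exchanging the two $7$-cycles (swap $x_5\leftrightarrow x_8$, $x_6\leftrightarrow x_9$, $x_7\leftrightarrow x_{10}$, $y_1\leftrightarrow y_2$, fix $x_1,x_2,x_3,x_4$), and reflecting the shared path (swap $x_1\leftrightarrow x_4$, $x_2\leftrightarrow x_3$, $x_5\leftrightarrow x_7$, $x_8\leftrightarrow x_{10}$, fix $x_6,x_9,y_1,y_2$); both are automorphisms of $H'$ leaving the whole configuration invariant.

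Then I would fix $u\in V(G)$, assuming $u\notin V(H')$ (otherwise $v=u$ works in \Cref{def:well-behaved}). Since $u x_6 y_1$ and $u x_9 y_2$ would be triangles, $u$ is adjacent to at most one of $y_1,y_2$, and by the first symmetry we may assume $u\not\sim y_2$. If also $u\not\sim y_1$, then $N_G(u,H')=N_G(u,H)\subseteq N_H(v)\subseteq N_{H'}(v)$ for the vertex $v$ supplied by \Cref{claim:two-cycles-well-behaved}. So the work is in the case $u\sim y_1$. Here the triangle $u x_6 y_1$ and the $5$-cycles $(u x_1 x_7 x_6 y_1)$, $(u x_4 x_5 x_6 y_1)$, $(u x_8 x_9 y_2 y_1)$, $(u x_{10} x_9 y_2 y_1)$ are forbidden, so $N_G(u,H)\subseteq\{x_2,x_3,x_5,x_7,x_9\}$, and the crux is to rule out $u\sim x_2$ (after which $u\not\sim x_3$ follows by the second symmetry). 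Suppose $u\sim x_2$; then $u\not\sim x_3$ ($u x_2 x_3$ would be a triangle), and by \Cref{lem:induced-cycle} the $6$-cycle $(u x_2 x_1 x_7 x_6 y_1)$ has a chord. The only surviving candidate is $u x_7$ --- pairs inside $H$ are non-edges since $H$ is induced, $y_1$ is not adjacent to $x_1,x_2,x_7$, $u x_6$ would be a triangle, and $u x_1$ is already excluded --- so $u\sim x_7$, and $C:=(u x_7 x_1 x_{10} x_9 y_2 y_1)$ is a $7$-cycle in $G$. Now $x_2$ has two neighbours in $C$, namely $u$ and $x_1$, while $x_3$ has none ($x_3\not\sim u$ by triangle-freeness, $x_3$ is non-adjacent to $x_7,x_1,x_{10},x_9$ since $H$ is induced, and $x_3\not\sim y_1,y_2$). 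As $x_3\sim x_2$, this contradicts \Cref{cor:vx-cycle-two-neighs}.

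With the crux in hand, $N_G(u,H)\subseteq\{x_5,x_7,x_9\}$; and since $x_9$ has no common neighbour in $H$ with $x_5$ or with $x_7$, \Cref{claim:two-cycles-well-behaved} forces $N_G(u,H)$ to be contained in $\{x_5,x_7\}$ or in $\{x_9\}$, hence $N_G(u,H')\subseteq N_{H'}(x_6)=\{x_5,x_7,y_1\}$ or $N_G(u,H')\subseteq N_{H'}(y_2)=\{x_9,y_1\}$, which is what we need. I expect the crux step $u\not\sim x_2$ to be the main obstacle: it does not follow from the well-behavedness of $H$ alone (indeed $\{x_2,x_7\}\subseteq N_H(x_1)$), and it is exactly there that an auxiliary $7$-cycle together with \Cref{cor:vx-cycle-two-neighs} must be brought in; the rest is a routine combination of short-cycle forbiddance with \Cref{claim:two-cycles-well-behaved}.
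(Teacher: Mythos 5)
Your proof is correct and follows essentially the paper's route: after reducing, via the well-behavedness of $H$ and the two symmetries, to a common neighbour $u$ of $y_1$ and $x_2$ (the mirror image of the paper's case $y_1, x_3$), you use \Cref{lem:induced-cycle} to force an extra chord and then contradict \Cref{cor:vx-cycle-two-neighs} with an auxiliary $7$-cycle --- yours being $(u x_7 x_1 x_{10} x_9 y_2 y_1)$ with the pair $x_2, x_3$, the paper's being $(x_1 x_2 x_3 u x_5 x_6 x_7)$ with the pair $x_4, x_8$. One trivial slip: the reason $u$ meets at most one of $y_1, y_2$ is the triangle $u y_1 y_2$ (as $y_1 y_2$ is an edge), not the triangles you cite, but the conclusion stands.
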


            \begin{proof}
				Suppose that $H'$ is not \wellbehaved.  Up to relabelling, it
				follows that $y_1$ and $x_3$ have a common neighbour $u$
				(recall that $H$ is \wellbehaved).  By considering the
				$6$-cycle $(u x_3 x_4 x_5 x_6 y_1)$ and in light of
				\Cref{lem:induced-cycle}, it follows that $u$ is adjacent to
				$x_5$.  Consider the $7$-cycle $C = (x_1 x_2 x_3 u x_5 x_6
				x_7)$. Observe that $x_4$ has two neighbours in $C$ ($x_3$ and
				$x_5$), but $x_8$ has no neighbour in $C$ ($x_8$ cannot be
				adjacent to $u$).  This contradicts
				\Cref{cor:vx-cycle-two-neighs}.
            \end{proof}

			Now consider the graph $H'' = H' \setminus \{x_5, x_{10}\}$. It
			follows from \Cref{claim:case-one-well-behaved} that every vertex
			in $G$ has at most two neighbours in $H''$. The usual argument, of
			double counting the edges between $V(H'')$ and $V(G) \setminus
			V(H'')$, leads to a contradiction to the minimum degree condition,
			thus completing the proof of \Cref{lem:two-cycles} in Case
			\ref{itm:three-path-vertical}.

        \subsection{Case \ref{itm:three-path-diagonal}: \normalfont a $3$-path
            between $x_7$ and $x_8$}

            Denote by $H'$ the graph induced by $\{x_1, \dots, x_{10}, y_1,
            y_2\}$ (see \Cref{fig:two-cycles-case2}).
            
            \begin{figure}[h]
                \centering
                \includegraphics[scale = 1]{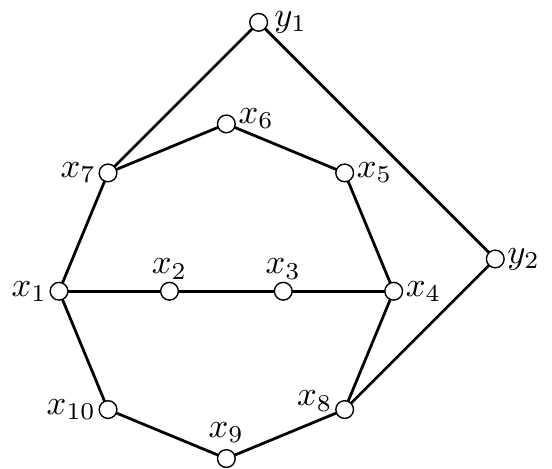}
                \caption{Case 2: a path of length $3$ between $x_7$ and $x_8$}
                \label{fig:two-cycles-case2}
            \end{figure}
            
            \begin{claim} \label{claim:case-two-well-behaved}
                The graph $H'$ is \wellbehaved.
            \end{claim}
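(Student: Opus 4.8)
The plan is to mirror the strategy of Claims~\ref{claim:two-cycles-well-behaved} and~\ref{claim:case-one-well-behaved}: assume $H'$ is not \wellbehaved{}, deduce that some pair of vertices of $H'$ that are not ``twins'' in $H'$ has a common neighbour $u$ in $G$, and then run the familiar $6$-cycle/$7$-cycle argument (using \Cref{lem:induced-cycle} and \Cref{cor:vx-cycle-two-neighs}) to force enough adjacencies from $u$ into $H'$ to produce a $C_3$, a $C_5$, an induced $C_6$, or a violation of \Cref{cor:vx-cycle-two-neighs}. First I would enumerate, up to the symmetries of $H'$, the ``bad pairs'': pairs $\{a,b\}\subseteq V(H')$ such that $N_{H'}(a)$ and $N_{H'}(b)$ are incomparable and are not realised inside a single vertex's $H'$-neighbourhood. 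Since $H$ itself is already \wellbehaved{} (Claim~\ref{claim:two-cycles-well-behaved}), the only new bad pairs are those involving $y_1$ or $y_2$; the symmetry here is the reflection swapping $x_7\leftrightarrow x_8$, $x_6\leftrightarrow x_9$, $x_5\leftrightarrow x_{10}$, $y_1\leftrightarrow y_2$, fixing $x_1,x_2,x_3,x_4$. So it suffices to treat bad pairs of the form $\{y_1,z\}$ for $z\in V(H)$ (the pair $\{y_1,y_2\}$ being handled separately).

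The core of the argument is then a short case analysis over such $z$. For each candidate common neighbour $u$ of $y_1$ and $z$: if $u$ is adjacent to $y_1$ then, by the $C_3$-freeness, $u$ is not adjacent to $x_7$ or $y_2$, and the path $x_7y_1uz\ldots$ together with $C_5$-freeness restricts $z$ to a handful of vertices; in each surviving case, pick a $6$-cycle through $u$, $y_1$ and part of $H$ (e.g.\ of the shape $(u\,y_1\,x_7\,x_1\,\cdot\,z)$ when $z$ is at the right distance, or running through $x_1x_2x_3x_4$), invoke \Cref{lem:induced-cycle} to get an extra edge at $u$, and then exhibit a $7$-cycle $C$ in which one vertex of $H$ has two neighbours while another vertex of $H$ has none, contradicting \Cref{cor:vx-cycle-two-neighs}. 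The remark immediately following Claim~\ref{claim:two-cycles-well-behaved} explicitly flags that ``arguments as in Claim~\ref{claim:two-cycles-well-behaved}, using \Cref{cor:vx-cycle-two-neighs} and \Cref{lem:induced-cycle} will appear frequently,'' which is exactly the machinery I expect to use here. The pair $\{y_1,y_2\}$ is dispatched by noting that a common neighbour $u$ would close the $4$-path $x_7y_1y_2x_8$ into a $C_5$ via $x_7y_1uy_2x_8$, using that $x_7x_8$ is a non-edge.

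I expect the main obstacle to be bookkeeping rather than a genuinely new idea: one must be careful that the putative common neighbour $u$ is genuinely outside $H'$ (or, if $u\in H'$, that it is one of the vertices already accounted for, so the pair was not bad after all), and one must check that the $6$- and $7$-cycles chosen really are cycles (i.e.\ their listed vertices are distinct and the claimed edges are present and the claimed non-edges absent in the \emph{induced} subgraph $H$). The delicate sub-cases are likely those where $z\in\{x_5,x_6,x_9,x_{10}\}$, since $y_1$ arose from a path anchored at $x_7$ and these vertices sit ``across'' the ladder; there the relevant $6$-cycle may need to route through the spine $x_1x_2x_3x_4$, and one has to rule out $u$ coinciding with $y_2$ or with a spine vertex. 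But in every branch the contradiction is of one of the three standard types (short odd cycle, induced $C_6$, or two-neighbours-vs-no-neighbours on a $7$-cycle), so no step should require an argument essentially different from those already carried out in the preceding claims.
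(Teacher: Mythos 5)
Your plan is the paper's plan: since $H$ is \wellbehaved{} (\Cref{claim:two-cycles-well-behaved}) and $C_3$/$C_5$-freeness rules out common neighbours of pairs at odd distance, a failure of well-behavedness of $H'$ reduces, up to the symmetry of the configuration, to a common neighbour $u$ of $y_1$ and a single vertex of $H$ --- in fact only $\{y_1,x_3\}$ survives --- and one then applies \Cref{lem:induced-cycle} to the $6$-cycle $(u\,y_1\,x_7\,x_1\,x_2\,x_3)$ and finishes with \Cref{cor:vx-cycle-two-neighs}. However, as written your proposal stops at this outline: the enumeration that the unique bad pair is $\{y_1,x_3\}$ and the ensuing contradiction are exactly the content of the claim, and they are deferred rather than carried out, so what you have is a correct strategy, not yet a proof.

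Three of the details you do commit to would need repair when executing it. First, the map you call the symmetry (swapping $x_7\leftrightarrow x_8$, $x_6\leftrightarrow x_9$, $x_5\leftrightarrow x_{10}$, $y_1\leftrightarrow y_2$ while fixing $x_1,\dots,x_4$) is not an automorphism of $H'$: $x_7$ is adjacent to $x_1$ while $x_8$ is not. The genuine reflection also swaps $x_1\leftrightarrow x_4$ and $x_2\leftrightarrow x_3$; it still reduces everything to pairs $\{y_1,z\}$, but it sends the mirror bad pair $\{y_2,x_2\}$ to $\{y_1,x_3\}$, and with your stated map the bookkeeping would misfire. Second, your dispatch of $\{y_1,y_2\}$ is not an argument: the path $x_7y_1uy_2x_8$ together with the \emph{non-edge} $x_7x_8$ closes nothing; the pair is excluded simply because $y_1y_2$ is an edge, so a common neighbour gives a triangle. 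Third, in the core case \Cref{lem:induced-cycle} does not necessarily give ``an extra edge at $u$'': since $x_7x_3$ is a non-edge ($H$ is induced), the forced diagonal of $(u\,y_1\,x_7\,x_1\,x_2\,x_3)$ is either $ux_1$ \emph{or} $y_1x_2$, and the second branch requires its own contradiction (in the paper: $x_1$ has two neighbours in the $7$-cycle $(x_2x_3x_4x_5x_6x_7y_1)$ while its neighbour $x_{10}$ has none, violating \Cref{cor:vx-cycle-two-neighs}; the first branch is killed analogously using the $7$-cycle $(x_1x_2x_3x_4x_8x_9x_{10})$ with $u$ versus $y_1$). With these corrections your outline does compile into the paper's proof.
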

            \begin{proof}
				If $H'$ is not \wellbehaved, then up to relabelling, $y_1$ and
				$x_3$ have a common neighbour $u$ (recall that $H$ is
				\wellbehaved{} by \Cref{claim:two-cycles-well-behaved}).
				Consider the $6$-cycle $(u y_1 x_7 x_1 x_2 x_3)$.  Since there
				is no induced $6$-cycle (\Cref{lem:induced-cycle}), either
				$y_1$ is adjacent to $x_2$, or $u$ is adjacent to $x_1$.  The
				former case leads to a contradiction similarly to
				\Cref{claim:case-one-well-behaved}: then $x_1$ has two
				neighbours in the $7$-cycle $(x_2 x_3x_4x_5x_6x_7y_1)$ whereas
				its neighbour $x_{10}$ has no neighbour there, contradicting
				\Cref{cor:vx-cycle-two-neighs}.  So, suppose the latter case
				holds, i.e.~$u$ is adjacent to $x_1$. But then $u$ has two
				neighbours in the $7$-cycle $(x_1 x_2 x_3 x_4 x_8 x_9 x_{10})$
				whereas $y_1$ has none, a contradiction.
            \end{proof}

			As before, in light of the missing edge $x_6 x_{10}$, one of the
			following three cases holds.
            \begin{enumerate}[(a)]
                \item \label{itm:two-cycles-case-a}
                    There is a $3$-path $x_6 z_1 z_2 x_9$ between $x_6$ and
                    $x_9$.
                \item \label{itm:two-cycles-case-b}
                    There is a $3$-path $x_5 z_1 z_2 x_{10}$ between $x_5$ and
                    $x_{10}$.
                \item \label{itm:two-cycles-case-c}
                    There is a $4$-path $x_6 z_1 z_2 z_3 x_{10}$ between $x_6$
                    and $x_{10}$.
            \end{enumerate}
            However, (\ref{itm:two-cycles-case-a}) does not hold, as we have
            seen in the previous subsection. So it remains to consider
            (\ref{itm:two-cycles-case-b}) and (\ref{itm:two-cycles-case-c}).

            \subsubsection*{Case
                \ref{itm:three-path-diagonal}\ref{itm:two-cycles-case-b}:
                \normalfont $3$-paths between $x_7$ and $x_8$ and between $x_5$
                and $x_{10}$}

				Denote by $F$ the graph induced by
				$\{x_1,\ldots,x_{10},y_1,y_2,z_1,z_2\}$ (see
				\Cref{fig:two-cycles-case-2b}). It is easy to check that the
				vertices $y_1, y_2, z_1, z_2$ are distinct.  Define $F' = F
				\setminus \{x_1, x_4, x_7, x_8\}$ (see
				\Cref{fig:two-cycles-case-2b}).
                
                \begin{figure}[h]
                    \centering
                    \includegraphics[scale = 1]{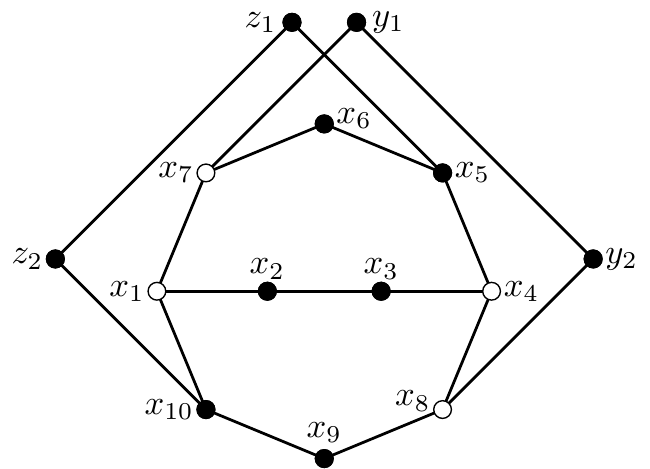}
                    \caption{Case 2\protect\ref{itm:two-cycles-case-b}: the graphs $F$ and
                    $F'$ (marked in black)}
                    \label{fig:two-cycles-case-2b}
                \end{figure}

                \begin{claim} \label{claim:two-neighs-case-2a}
					Every vertex of $G$ has at most two neighbours in $F'$.
                \end{claim}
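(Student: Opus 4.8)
The plan is to suppose, towards a contradiction, that some vertex $u$ has at least three neighbours in $F'$, and to force a configuration forbidden by \Cref{cor:vx-cycle-two-neighs}. First record the shape of $F'$: deleting $x_1, x_4, x_7, x_8$ from $F$ leaves the path $x_6 x_5 z_1 z_2 x_{10} x_9$ together with the two independent edges $x_2 x_3$ and $y_1 y_2$, so $F'$ has maximum degree at most $2$. Consequently every vertex of $F'$ has at most two neighbours in $F'$, and one checks directly (using \cfree{}ness to exclude stray chords) that each of $x_1, x_4, x_7, x_8$ has exactly two neighbours in $F'$. So we may assume $u \notin F$.

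The first step is to cut down $N(u)\cap F'$ using \wellbehaved{}ness. By \Cref{claim:two-cycles-well-behaved} and \Cref{claim:case-two-well-behaved}, $H$ and $H'$ are \wellbehaved{} in $G$; moreover, since the configuration consisting of $H$ and the $3$-path $x_5 z_1 z_2 x_{10}$ is isomorphic to that of Case~\ref{itm:three-path-diagonal} (via the automorphism of $H$ exchanging its two arcs $x_5 x_6 x_7$ and $x_8 x_9 x_{10}$), the proof of \Cref{claim:case-two-well-behaved} shows, with the obvious relabelling, that $H'' := G[\{x_1,\dots,x_{10},z_1,z_2\}]$ is \wellbehaved{} as well. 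Write $a = |N(u)\cap\{x_2,x_3,x_5,x_6,x_9,x_{10}\}|$, $b = |N(u)\cap\{y_1,y_2\}|$ and $c = |N(u)\cap\{z_1,z_2\}|$, so that $a+b+c = |N(u)\cap F'| \ge 3$. Since $y_1 y_2$ and $z_1 z_2$ are edges of the triangle-free graph $G$, we have $b,c \le 1$. Also $a+b\le 2$: the only vertices of $H'$ of degree larger than $2$ are $x_1, x_4, x_7, x_8$, and each of these has at most two neighbours inside $\{x_2,x_3,x_5,x_6,x_9,x_{10},y_1,y_2\}$, so \wellbehaved{}ness of $H'$ gives $|N(u)\cap\{x_2,x_3,x_5,x_6,x_9,x_{10},y_1,y_2\}| \le 2$; symmetrically $a+c\le 2$ via $H''$. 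These inequalities together with $a+b+c\ge 3$ force $a=b=c=1$. Thus $u$ has a unique neighbour $x_i \in \{x_2,x_3,x_5,x_6,x_9,x_{10}\}$, a unique neighbour $y_j \in \{y_1,y_2\}$ and a unique neighbour $z_{j'}\in\{z_1,z_2\}$.

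The second step pins down these vertices. Since $u$ is a common neighbour of $x_i$ and $y_j$ and $H'$ is \wellbehaved{}, the vertices $x_i$ and $y_j$ have a common neighbour inside $H'$. Here one verifies — as in the preceding claims of this section, via \Cref{lem:induced-cycle}, \Cref{cor:vx-cycle-two-neighs} and \cfree{}ness of $G$ (also exploiting the $3$-path $x_5 z_1 z_2 x_{10}$ to kill the otherwise possible chords of $F$) — that $N_{H'}(y_1)=\{x_7,y_2\}$ and $N_{H'}(y_2)=\{y_1,x_8\}$, so the only possibilities are $\{x_i,y_j\}\in\{\{x_6,y_1\},\{x_9,y_2\}\}$. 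The analogous statement in $H''$ gives $\{x_i,z_{j'}\}\in\{\{x_6,z_1\},\{x_9,z_2\}\}$. Matching the common vertex $x_i$, we conclude that $u$ is adjacent either to $x_6,y_1,z_1$, or to $x_9,y_2,z_2$.

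It remains to rule out these two configurations via \Cref{cor:vx-cycle-two-neighs}. Suppose first that $u\sim x_6,y_1,z_1$. Then $C:=(u\, x_6\, x_5\, x_4\, x_8\, y_2\, y_1)$ is a $7$-cycle, and $x_1$ has no neighbour on $C$: we have $x_1\not\sim u$ (else $(u\, x_1\, x_{10}\, z_2\, z_1)$ is a $5$-cycle), and none of $x_1x_6$, $x_1x_5$, $x_1x_4$, $x_1x_8$, $x_1y_2$, $x_1y_1$ is an edge, since each closes up a $C_3$ or $C_5$ inside $F$. But the neighbour $x_7$ of $x_1$ has two neighbours, $x_6$ and $y_1$, on $C$, contradicting \Cref{cor:vx-cycle-two-neighs}. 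The case $u\sim x_9,y_2,z_2$ is symmetric, using the $7$-cycle $(u\, x_9\, x_{10}\, x_1\, x_7\, y_1\, y_2)$: here $x_4$ has no neighbour on the cycle (now $x_4\not\sim u$ since otherwise $(u\, x_4\, x_5\, z_1\, z_2)$ is a $5$-cycle, and every other potential edge from $x_4$ to the cycle creates a $C_3$ or $C_5$), while the neighbour $x_8$ of $x_4$ has two neighbours, $x_9$ and $y_2$, on it. This contradiction completes the proof. The main obstacle is the second step — establishing the precise chord structure of $F$ (equivalently, the neighbourhoods of $y_1,y_2,z_1,z_2$ inside $H'$ and $H''$) in order to isolate the two configurations — together with the routine but numerous non-adjacency checks certifying that $x_1$ (respectively $x_4$) misses the exhibited $7$-cycle.
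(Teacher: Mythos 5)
Your overall strategy is sound, and your endgame is essentially the paper's: in the configuration $u\sim x_6,y_1,z_1$ your $7$-cycle $(u\,x_6\,x_5\,x_4\,x_8\,y_2\,y_1)$ with the pair $x_1,x_7$ is exactly the cycle the paper uses (after it has forced $u\sim x_6$), and your second configuration is handled in the same style; I checked the non-adjacency verifications in both and they are correct. The difference is in how you pin down $N(u)\cap F'$. The paper only extracts that $u$ meets $\{y_1,y_2\}$ and $\{z_1,z_2\}$ (via \wellbehaved{}ness of $H$, $H'$ and of the graph on $\{x_1,\dots,x_{10},z_1,z_2\}$) and then applies \Cref{lem:induced-cycle} to a single $6$-cycle ($(u\,z_1\,x_5\,x_6\,x_7\,y_1)$, resp.\ $(u\,y_1\,x_7\,x_1\,x_{10}\,z_2)$) to force the third adjacency $u\sim x_6$ (resp.\ $u\sim x_1$), so it never needs to know the exact chord structure of $F$. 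You instead determine $u$'s full trace on $F'$, which makes the proof longer and forces you to establish facts the paper deliberately avoids in Case~2(b).

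Two points need repair. First, your Step~0/Step~2 structural claims are not consequences of \cfree{}ness alone: the pairs $x_2y_1$, $x_3y_2$, $x_2z_2$, $x_3z_1$ close no $C_3$ or $C_5$ against the known edges of $F$, so ``$F'$ has maximum degree at most $2$'', ``the only vertices of $H'$ of degree larger than $2$ are $x_1,x_4,x_7,x_8$'', and $N_{H'}(y_1)=\{x_7,y_2\}$, $N_{H''}(z_1)=\{x_5,z_2\}$, etc., all genuinely require extra arguments. They are true and provable with your cited tools (e.g.\ if $x_2y_1\in E$, apply \Cref{cor:vx-cycle-two-neighs} to the $7$-cycle $(y_1x_2x_3x_4x_5x_6x_7)$ with $x_1,x_{10}$; if $x_2z_2\in E$, to $(z_2x_2x_3x_4x_8x_9x_{10})$ with $x_1,x_7$; similarly for the other two), in the spirit of \Cref{claim:no-additional-edges-case2c}, but your argument really depends on these exclusions — e.g.\ if both $x_2y_1$ and $x_2z_2$ were present, $x_2$ itself would have three neighbours in $F'$ — so they must be carried out, not attributed to \cfree{}ness. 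Second, the ``analogous statement in $H''$'' is not what you wrote: since $x_5,x_{10}$ lie in $F'$ (unlike $x_7,x_8$), the common-neighbour argument in $H''$ also permits $\{x_i,z_{j'}\}=\{x_{10},z_1\}$ (common neighbour $z_2$) and $\{x_5,z_2\}$ (common neighbour $z_1$). Your conclusion survives because the $H'$-constraint already forces $x_i\in\{x_6,x_9\}$, so matching kills the two extra pairs, but the intermediate claim should be corrected accordingly.
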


                \begin{proof}
					Suppose that there is a vertex $u$ with at least three
					neighbours in $F'$.  We note that $u$ is adjacent to one of
					$y_1$ and $y_2$ and also to one of $z_1$ and $z_2$. Indeed,
					otherwise, it is easy to check that $u$ has at most two
					neighbours in $F'$ using the fact that $H$ is
					\wellbehaved{} (and thus also the graph induced by
					$\{x_1,...,x_{10},z_1,z_2\}$; see
					\Cref{claim:two-cycles-well-behaved}).  By symmetry, we may
					assume that $u$ is adjacent to $y_1$.  Suppose that $u$ is
					also adjacent to $z_1$. By considering the $6$-cycle $(u
					z_1 x_5 x_6 x_7 y_1)$, it follows that $u$ is adjacent to
					$x_6$. But, then, $x_7$ has two neighbours in the $7$-cycle
					$(u y_1 y_2 x_8 x_4 x_5 x_6)$ while $x_1$ has none.  This
					is a contradiction to \Cref{cor:vx-cycle-two-neighs}.

					It remains to consider the case where $u$ is adjacent to
					both $y_1$ and $z_2$. It follows that $u$ is adjacent to
					$x_1$ (consider $(u y_1 x_7 x_1 x_{10} z_2)$). This is a
					contradiction to \Cref{cor:vx-cycle-two-neighs}: $x_{10}$
					has two neighbours in $(u z_2 z_1 x_5 x_6 x_7 x_1)$ whereas
					$x_9$ has none. 
                \end{proof}

                \Cref{claim:two-neighs-case-2a} leads to a contradiction by
                double counting the edges between $V(F')$ and $V(G) \setminus
                V(F')$.  This completes the proof of \Cref{lem:two-cycles} in
                this case.

		\subsubsection*{Case 
			\ref{itm:three-path-diagonal}\ref{itm:two-cycles-case-c}:
			\normalfont a $3$-path between $x_7$ and $x_8$ and a $4$-path
			between $x_6$ and $x_{10}$}
		
			Denote by $F$ the graph induced by $\{x_1,\ldots,x_{10},y_1,y_2,z_1,
			z_2,z_3\}$ (see \Cref{fig:two-cycles-case-2c}).

			\begin{figure}[h]
				\centering
				\includegraphics[scale = 1]{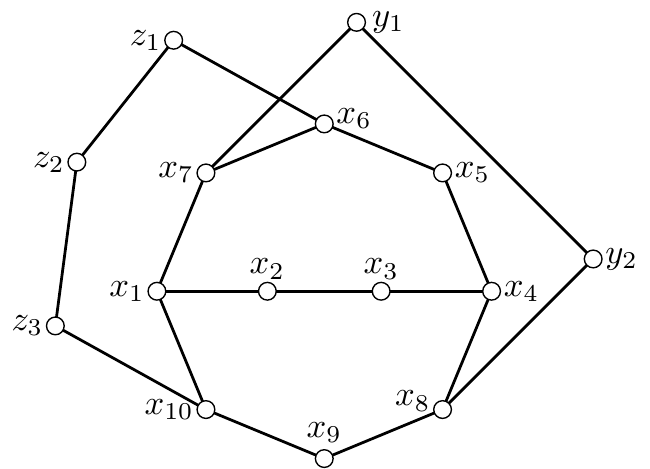}
				\caption{Case
					\protect\ref{itm:three-path-diagonal}\protect\ref{itm:two-cycles-case-c}: the
					graph $F$}
				\label{fig:two-cycles-case-2c}
			\end{figure}

			\begin{claim} \label{claim:no-additional-edges-case2c}
				The only edges spanned by $F$ are those spanned by $H$ and the
				edges of the two paths $x_6 z_1 z_2 z_3 x_{10}$ and $x_7 y_1
				y_2 x_8$.
			\end{claim}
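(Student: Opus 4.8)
The plan is to rule out each potential ``extra'' edge — that is, each pair of vertices of $F$ that is not an edge of $H$ and not one of the seven edges of the two paths $x_6 z_1 z_2 z_3 x_{10}$ and $x_7 y_1 y_2 x_8$ — by showing that its presence would create a forbidden configuration. The forbidden configurations available to us are: a triangle or a $5$-cycle (since $G$ is $\{C_3,C_5\}$-free), an induced $6$-cycle (\Cref{lem:induced-cycle}), a $12$-cycle with too few diagonals (\Cref{lem:cycle-chords}), two $7$-cycles meeting in a $3$-path appearing as an \emph{induced} subgraph (this is \Cref{lem:two-cycles} itself, so it may only be invoked in the form already fixed, namely on the copy $H$), and — most usefully — the statement of \Cref{cor:vx-cycle-two-neighs}, that a vertex with no neighbour in a $7$-cycle $C$ has no neighbour with two neighbours in $C$. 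I would organise the case analysis by which ``blocks'' of $F$ the two endpoints of the putative edge lie in: both inside $H$; one in $H$ and one on the $z$-path $\{z_1,z_2,z_3\}$; one in $H$ and one on the $y$-path $\{y_1,y_2\}$; and one on each of the two new paths.

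First I would dispose of edges with both endpoints in $V(H)$: since $H$ is induced in $G$ by hypothesis, there is nothing to check here, so this case is vacuous. Next, for an endpoint on the $y$-path $\{y_1,y_2\}$ together with a vertex of $H$, I would note that $y_1, y_2$ were produced exactly as in the earlier cases; the argument of \Cref{claim:case-two-well-behaved} already shows $H'$ (the graph on $H\cup\{y_1,y_2\}$) is well-behaved, and in particular no $y_i$ has a spurious neighbour inside $H$ beyond what the two $7$-cycles through the path $x_7y_1y_2x_8$ dictate — any such neighbour either immediately makes a triangle or a $5$-cycle, or else gives a vertex with two neighbours in one of the $7$-cycles of $H$ while a ``far'' vertex (e.g.\ $x_1$ versus $x_6$, or $x_{10}$ versus $x_9$) has none, contradicting \Cref{cor:vx-cycle-two-neighs}. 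The edges between the $z$-path and $V(H)$ are handled symmetrically: $z_1,z_2,z_3$ form a $4$-path from $x_6$ to $x_{10}$, exactly the configuration of Case \ref{itm:four-path} (with roles relabelled), so the same triangle/$5$-cycle checks, together with \Cref{lem:induced-cycle} applied to short cycles through $z$-path vertices and \Cref{cor:vx-cycle-two-neighs} applied to the $7$-cycles of $H$, eliminate every candidate. The remaining edges are those joining a $y$-vertex to a $z$-vertex: here I would run through the small number of pairs $\{y_i,z_j\}$ and observe that each one closes up a short cycle through the fixed subgraph $H$ — for instance an edge $y_1 z_1$ together with the path pieces $y_1 x_7 x_6 z_1$ forms a $4$-cycle plus a chord situation, or combines with a longer path to give a $5$-cycle or an induced $6$-cycle — and for the one or two residual pairs that survive the triangle/pentagon test I would exhibit a concrete $7$-cycle inside $F$ for which some vertex of $F$ has exactly two neighbours and another has none, invoking \Cref{cor:vx-cycle-two-neighs}.

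The main obstacle I anticipate is the last family, the $y$–$z$ edges, together with a few of the longer-range $H$–$(z\text{-path})$ edges: unlike triangle/pentagon checks, these do not fail for a single local reason and require naming the right auxiliary $7$-cycle (or $12$-cycle, to feed into \Cref{lem:cycle-chords}) and verifying that its vertices are genuinely distinct from the path vertices — the bookkeeping of which $y_i$, $z_j$ might coincide with an $x_k$ was already partly settled when the paths were introduced, but one should double-check distinctness in each sub-case before quoting a cycle. I expect the cleanest uniform tool throughout to be \Cref{cor:vx-cycle-two-neighs}: in almost every bad case one endpoint of the new edge acquires a second neighbour on one of the two $7$-cycles of $H$, while a suitably chosen vertex (a ``diametrically opposite'' $x_k$, or one of the new path vertices, which cannot reach that cycle without a triangle or pentagon) has no neighbour there at all, and that is the contradiction.
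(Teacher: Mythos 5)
Your overall architecture (triangle/pentagon checks, \Cref{lem:induced-cycle}, and \Cref{cor:vx-cycle-two-neighs}, organised by which blocks the endpoints of a putative extra edge lie in) matches the paper for most candidate edges, and your treatment of the $y$--$z$ edges and the $y$-to-$H$ edges is essentially the paper's. But there is a genuine gap in your handling of the edges between $\{z_1,z_2,z_3\}$ and $V(H)$: after the triangle/pentagon sieve, the surviving candidates are $z_1x_4$, $z_2x_5$, $z_2x_9$ and $z_3x_8$, and none of these is killed by the tools you list. Each of them only creates $4$-cycles and $7$-cycles, no induced $6$-cycle presents itself, and \Cref{cor:vx-cycle-two-neighs} does not apply directly: the natural candidate for the ``vertex with two neighbours'' in the relevant $7$-cycle (e.g.\ $x_5$ for the edge $z_1x_4$, or $x_9$ for $z_3x_8$) has no known neighbour outside that cycle, so you cannot produce the required neighbour with \emph{no} neighbour on the cycle. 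So your expectation that ``the same triangle/$5$-cycle checks, together with \Cref{lem:induced-cycle} \dots and \Cref{cor:vx-cycle-two-neighs} \dots eliminate every candidate'' fails precisely here.

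What the paper does instead — and what your plan is missing — is a vertex-replacement reduction to the sub-cases already settled. If $z_2x_5$ (resp.\ $z_2x_9$) were an edge, then $x_5z_2z_3x_{10}$ (resp.\ $x_6z_1z_2x_9$) is a $3$-path realising exactly the configuration of Case \ref{itm:three-path-diagonal}\ref{itm:two-cycles-case-b} (resp.\ Case \ref{itm:three-path-vertical}), which has already been shown impossible; if $z_1x_4$ were an edge, replacing $x_5$ by $z_1$ in $H$ again yields the configuration of \ref{itm:two-cycles-case-b}, and if $z_3x_8$ were an edge, replacing $x_9$ by $z_3$ yields that of Case \ref{itm:three-path-vertical}. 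These earlier configurations were each eliminated by their own well-behavedness-plus-double-counting arguments, so the strength of those cases is genuinely needed here; purely local forbidden-subgraph checks of the kind you propose will not close these four cases. (A smaller point: invoking ``the configuration of Case \ref{itm:four-path} with roles relabelled'' for the $z$-path is not available as a forbidden structure at this stage, since Case \ref{itm:four-path} is only disposed of later; what is available, and what must be used, is the impossibility of Cases \ref{itm:three-path-vertical} and \ref{itm:three-path-diagonal}\ref{itm:two-cycles-case-b}.)
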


			\begin{proof}
				First we note that $y_1$ and $y_2$ do not have additional
				neighbours in $\{x_1,\ldots, x_{10}\}$.  Indeed, by symmetry we
				assume that $y_1$ has an additional neighbour in $H$. The only
				possible such neighbour is $x_2$.  We reach a contradiction
				to \Cref{cor:vx-cycle-two-neighs} (consider the $7$-cycle $(y_1
				x_2 x_3 x_4 x_5 x_6 x_7)$ and the vertices $x_1$ and
				$x_{10}$).

				We now show that $z_1$, $z_2$ and $z_3$ do not send additional
				edges into $H$.  Using the fact that $H'$ is \wellbehaved{},
				the only possible additional neighbour of $z_1$ is $x_4$.  But
				then, by replacing $x_5$ by $z_1$, we may assume that there
				is a $3$-path from $x_5$ to $x_{10}$. This leads to a
				contradiction, as we have seen in Case
				\ref{itm:three-path-diagonal}\ref{itm:two-cycles-case-b}.
				Similarly, the possible additional neighbours of $z_3$ in $H$
				are $x_8$ and $x_2$. If $z_3$ is adjacent to $x_8$ then, by
				replacing $x_9$ by $z_3$, we may assume that there is a
				$3$-path between $x_6$ and $x_9$, contradicting Case
				\ref{itm:three-path-vertical}.  If $z_3$ is adjacent to $x_2$
				we reach a contradiction to \Cref{cor:vx-cycle-two-neighs}
				($x_1$ has two neighbours in the $7$-cycle $(z_3 x_2 x_3 x_4
				x_8 x_9 x_{10})$ while $x_7$ has none).  The possible
				neighbours of $z_2$ in $H$ are $x_3$, $x_5$ and $x_9$. But
				$z_2$ is not adjacent to $x_5$ or $x_9$, because, otherwise,
				there is a $3$-path between $x_5$ and $x_{10}$ or between $x_6$
				and $x_9$, contradicting previous cases. Furthermore, $z_2$ is
				not adjacent to $x_3$ because, otherwise, $(z_1 z_2 x_3 x_4
				x_5 x_6)$ is an induced $6$-cycle, contradicting
				\Cref{lem:induced-cycle}.

				Finally, we show that there are no edges between $\{z_1, z_2,
				z_3\}$ and $\{y_1, y_2\}$. The only such edges that do not
				create a triangle or pentagon are $z_1 y_1$ and $z_2 y_2$.  If
				$z_1$ is adjacent to $y_1$ we reach a contradiction to
				\Cref{cor:vx-cycle-two-neighs} (see $(z_1 y_1 y_2 x_8 x_4 x_5
				x_6)$ and the vertices $x_1$, $x_7$), and if $z_2 y_2$ is an
				edge, a contradiction to \Cref{lem:induced-cycle} is reached
				(consider the induced $6$-cycle $(z_1 z_2 y_2 y_1 x_7 x_6)$).
				This completes the proof of
				\Cref{claim:no-additional-edges-case2c}.
			\end{proof}
			
			The following claim states that no vertex has more than three
			neighbours in $F$. Since $|V(F)| = 15$, this is a contradiction to
			the minimum degree condition on $G$ by the usual double counting
			argument, hence the proof of \Cref{lem:two-cycles} in this case
			follows.
			\begin{claim} 
				No vertex has more than three neighbours in $F$.
			\end{claim}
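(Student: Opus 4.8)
The plan is to bound, for an arbitrary vertex $u \in V(G)$, the number of neighbours $u$ can have in $F$, where $F$ is the $15$-vertex graph on $\{x_1,\ldots,x_{10},y_1,y_2,z_1,z_2,z_3\}$ whose edge set has just been pinned down by \Cref{claim:no-additional-edges-case2c}. Having that explicit edge list is exactly what makes the count tractable: I would organise the argument according to how many neighbours $u$ has in the "core" $H = G[\{x_1,\ldots,x_{10}\}]$, then add in the contribution of the $y_i$ and $z_j$.

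First I would record that, since $H$ is \wellbehaved{} (\Cref{claim:two-cycles-well-behaved}), $u$ has at most two neighbours in $H$, and if it has exactly two they lie in a common $H$-neighbourhood $N_H(v)$. Running through the (few) vertices $v$ of $H$ and their neighbourhoods, I would note which pairs are actually realisable and, for each, check using the known edge set of $F$ which of $y_1,y_2,z_1,z_2,z_3$ could additionally be joined to $u$ without creating a $C_3$ or $C_5$ or contradicting \Cref{lem:induced-cycle} or \Cref{cor:vx-cycle-two-neighs}; the goal is to see that two core neighbours force at most one further neighbour among the $y$'s and $z$'s, and more typically none. The same case analysis, specialised to $|N_G(u,H)| \le 1$, handles vertices with one or zero neighbours in $H$: if $u$ has one neighbour in $H$ I would show it has at most two further neighbours among $\{y_1,y_2,z_1,z_2,z_3\}$ (for a total of $3$), and if $u$ has no neighbour in $H$ I would show that among $y_1,y_2$ it has at most one (they have a common neighbour only via the forbidden $z_1y_1$/$z_2y_2$-type configurations, but more simply $y_1y_2$ is an edge so a common neighbour makes a triangle), among $z_1,z_2,z_3$ at most... well, $z_1z_2$ and $z_2z_3$ are edges so $u$ avoids $z_2$ if it takes a neighbour of it, giving at most two of the three, and the cross edges between $\{y_i\}$ and $\{z_j\}$ are absent, so one needs to rule out $u$ simultaneously hitting, say, $y_1$ and $z_1$ — here I would invoke the same $6$-cycle / two-neighbours-in-a-$7$-cycle arguments used repeatedly above (e.g.\ a common neighbour of $y_1$ and $z_1$ together with the path $z_1 x_5 x_6 x_7 y_1$ yields either an induced $C_6$ or, after forcing an extra edge, a contradiction to \Cref{cor:vx-cycle-two-neighs}).

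The routine but slightly delicate part is the bookkeeping in the two-core-neighbours case: one has to be sure that no labelling of $u$'s pair of core neighbours leaves room for two of the $y$/$z$ vertices. I expect the worst offenders to be pairs like $\{x_6,x_4\}$ (via $x_5$) or $\{x_2,x_8\}$ — wait, $x_2x_8$ is not an edge of $H$, so that's not a $\wellbehaved{}$ pattern — rather pairs like $\{x_1,x_3\}$ (via $x_2$) or $\{x_8,x_{10}\}$ (via $x_9$), where $u$ sits near the endpoints of one of the attached paths; there one argues as in the proof of \Cref{claim:no-additional-edges-case2c} that picking up, say, both $z_1$ and $z_3$ in addition forces an extra edge into $H$ that was just excluded, or produces a $7$-cycle in which some $x_i$ has two neighbours and an adjacent $x_j$ has none, contradicting \Cref{cor:vx-cycle-two-neighs}. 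So the main obstacle is not conceptual but combinatorial completeness — making sure every one of the finitely many neighbourhood patterns of $u$ in $F$ has been examined — and the tools for each individual pattern are already in hand: the absence of induced $6$-cycles, \Cref{cor:vx-cycle-two-neighs}, and the $\{C_3,C_5\}$-freeness itself.

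Once the claim is proved, the conclusion is immediate by the double-counting scheme used throughout: every vertex of $F$ has at most three neighbours in $F$ hence more than $n/5 - 3$ neighbours outside, giving more than $15(n/5-3) = 3(n-15)$ edges between $V(F)$ and $V(G)\setminus V(F)$, while the claim bounds that same count by $3(n-15)$ — a contradiction. This finishes Case~\ref{itm:three-path-diagonal}\ref{itm:two-cycles-case-c}, and with it Case~\ref{itm:three-path-diagonal} and the whole of \Cref{lem:two-cycles}, modulo Case~\ref{itm:four-path}, which is handled analogously.
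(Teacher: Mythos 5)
There is a genuine gap, and it starts at your very first step: \wellbehaved{} does \emph{not} give ``at most two neighbours in $H$''. Being \wellbehaved{} only says $N_G(u,H)\subseteq N_H(v)$ for some $v$, and $H$ (two $7$-cycles meeting in a $3$-path) has vertices of degree $3$, namely $x_1$ and $x_4$ (and in $H'$ also $x_7$ and $x_8$). So a vertex of $G$ may have three neighbours in the core, e.g.\ all of $\{x_3,x_5,x_8\}=N_H(x_4)$, and your case decomposition (two, one, or zero core neighbours, plus a budget of $y$/$z$-neighbours) simply omits these patterns. They are exactly the dangerous ones: a vertex $u$ adjacent to $x_3,x_5,x_8$ and $z_1$ has four neighbours in $F$, yet creates no triangle or pentagon inside $F\cup\{u\}$ and yields no immediate induced-$C_6$ or \Cref{cor:vx-cycle-two-neighs} contradiction; it can only be excluded by a replacement argument (swap $x_4$ for $u$, or more precisely a degree-$3$ vertex of $H'$ for $u$) against \Cref{claim:no-additional-edges-case2c}. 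Since your plan never reaches this configuration, the proof as proposed does not establish the claim; the rest of the enumeration is moreover left explicitly unexecuted (``combinatorial completeness'' is precisely the content here), so the argument is a plan rather than a proof.

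For comparison, the paper's proof avoids the brute-force enumeration altogether and is built around the degree-$3$ vertices you lost: since $H'$ (the $12$-vertex graph on $\{x_1,\dots,x_{10},y_1,y_2\}$) is \wellbehaved{} with maximum degree $3$, a vertex $u$ with four neighbours in $F$ must hit $\{z_1,z_2,z_3\}$; it cannot hit two of them ($z_1z_2$, $z_2z_3$ are edges, and hitting $z_1$ and $z_3$ lets one replace $z_2$ by $u$, contradicting \Cref{claim:no-additional-edges-case2c}); hence $u$ has at least three neighbours in $H'$, which by \wellbehaved{}ness must be the full neighbourhood of a degree-$3$ vertex $v\in\{x_1,x_4,x_7,x_8\}$ of $H'$; replacing $v$ by $u$ then gives $v$ an ``additional'' edge into $F$, again contradicting \Cref{claim:no-additional-edges-case2c}. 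If you want to salvage your route, you must (i) correct the bound to ``at most three neighbours in $H'$, and if three then they form $N_{H'}(v)$ for a degree-$3$ vertex $v$'', and (ii) handle those three-neighbour patterns with the replacement trick above — at which point you have essentially reconstructed the paper's argument.
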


			\begin{proof}
				Since $H'$ is \wellbehaved{} (see
				\Cref{claim:case-two-well-behaved}; recall that $H'$ is the
				graph induced by the set $\{x_1,\ldots,x_{10}, y_1, y_2\}$) and
				has maximum degree $3$, if there is a vertex $u$ with four
				neighbours in $F$, it must be adjacent to at least one of
				$z_1, z_2, z_3$.  We note that $u$ cannot be adjacent to both
				$z_1$ and $z_3$ because then, by replacing $z_2$ by $u$, we may
				assume that $z_2$ has an additional edge in $F$, a
				contradiction to \Cref{claim:no-additional-edges-case2c}.  It
				follows that $u$ has one neighbour among $z_1, z_2, z_3$ and at
				least three neighbours in $H'$. Since $H'$ is \wellbehaved{},
				$u$ is adjacent to all three neighbours of a vertex $v$ in
				$H'$ of degree three (in $H'$). But then, by replacing $v$ by
				$u$, we may assume that $v$ has an additional edge in $F$, a
				contradiction to \Cref{claim:no-additional-edges-case2c}.
			\end{proof}

    \subsection{Case \ref{itm:four-path}: \normalfont a $4$-path between
        $x_6$ and $x_8$}

		Denote by $H'$ the graph induced by $\{x_1, \dots, x_{10}, y_1, y_2,
		y_3\}$, and let $H'' = H' \setminus \{x_5, x_7, y_3\}$ (see
		\Cref{fig:four-path}).

        \begin{figure}[h]
            \centering
            \includegraphics[scale = 1]{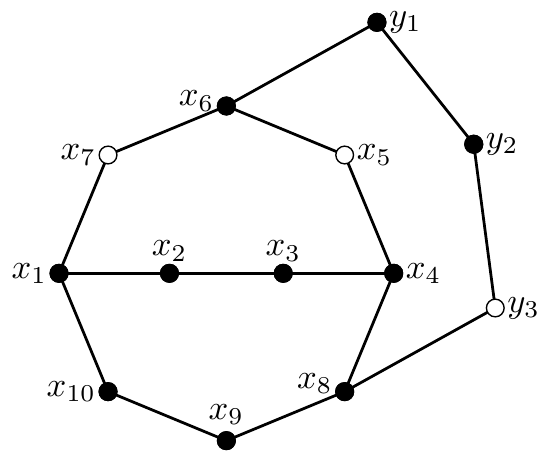}
            \caption{Case \protect\ref{itm:four-path}: the graphs $H'$ and $H''$ (marked in black)} 
            \label{fig:four-path}
        \end{figure}

        \begin{claim} \label{claim:no-additional-edges}
            The only edges in $H'$ are those spanned by $H$ or by the path
            $x_6 y_1 y_2 y_3 x_8$.
        \end{claim}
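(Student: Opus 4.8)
The plan is to show that $H'$ has no edge beyond those of $H$ and of the path $x_6y_1y_2y_3x_8$. Since $H=G[\{x_1,\dots,x_{10}\}]$ is induced in $G$, every extra edge of $H'$ is incident to $\{y_1,y_2,y_3\}$; inside $\{y_1,y_2,y_3\}$ the only candidate is $y_1y_3$, which would close the triangle $y_1y_2y_3$. So it suffices to rule out edges from $\{y_1,y_2,y_3\}$ to $\{x_1,\dots,x_{10}\}$ other than $x_6y_1$ and $x_8y_3$. First I would run a short check: for each such edge, using only the edges of $H$ and of $x_6y_1y_2y_3x_8$, the edge closes a triangle or a $5$-cycle in all but the following cases, which remain as genuine candidates: $x_1y_1$; $\,x_2y_2$, $x_7y_2$, $x_9y_2$; and $\,x_3y_3$, $x_{10}y_3$.

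Next I would eliminate the easy candidates. If $x_7y_2\in E(G)$ then $x_7y_2y_3x_8$ is a $3$-path between $x_7$ and $x_8$ with internal vertices outside $H$, i.e.\ exactly the configuration forbidden by Case~\ref{itm:three-path-diagonal}; likewise $x_9y_2\in E(G)$ yields the $3$-path $x_6y_1y_2x_9$ forbidden by Case~\ref{itm:three-path-vertical}. For $x_3y_3$: if it is an edge then $(y_3x_3x_4x_5x_6y_1y_2)$ is a $7$-cycle in which $x_8$ has the two neighbours $x_4$ and $y_3$, while its neighbour $x_9$ has none (the neighbours of $x_9$ in $H'$ being just $x_8$ and $x_{10}$ by the eliminations so far), contradicting \Cref{cor:vx-cycle-two-neighs}.

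The remaining candidates $x_1y_1$, $x_2y_2$, $x_{10}y_3$ are the delicate ones and should be treated in this order. First, $x_1y_1$ and $x_2y_2$ cannot both hold: otherwise $(y_2x_2x_1x_{10}x_9x_8y_3)$ is a $7$-cycle in which $y_1$ has the two neighbours $y_2$ and $x_1$ while its neighbour $x_6$ has none, contradicting \Cref{cor:vx-cycle-two-neighs}. Hence if $x_1y_1\in E(G)$ then $x_2y_2\notin E(G)$, so $y_2$ has no neighbour in $H$ at all; but then in the $7$-cycle $(x_1\dots x_7)$ the vertex $y_1$ has the two neighbours $x_1$ and $x_6$ whereas its neighbour $y_2$ has none, again contradicting \Cref{cor:vx-cycle-two-neighs}. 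Thus $x_1y_1\notin E(G)$, and now if $x_2y_2$ were an edge then $(y_2x_2x_1x_7x_6y_1)$ would be an induced $6$-cycle --- every potential chord is excluded by the triangle/pentagon restrictions together with Case~\ref{itm:three-path-diagonal} and the fact that $x_1y_1\notin E(G)$ --- contradicting \Cref{lem:induced-cycle}. Finally, suppose $x_{10}y_3\in E(G)$. Since all other potential neighbours of $y_3$ in $\{x_1,\dots,x_{10}\}$ have been excluded, the set $\{x_1,\dots,x_8,x_{10},y_3\}$ induces the union of the two $7$-cycles $(x_1x_2x_3x_4x_5x_6x_7)$ and $(x_1x_2x_3x_4x_8y_3x_{10})$, meeting in the path $x_1x_2x_3x_4$; but then $x_6y_1y_2y_3$ is a $3$-path joining the apex $x_6$ of the first cycle to the apex $y_3$ of the second, with internal vertices outside this configuration --- precisely the situation of Case~\ref{itm:three-path-vertical} (with $x_9$ relabelled as $y_3$), a contradiction. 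Having exhausted all candidates, the claim follows.

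I expect the last step to be the main obstacle: the edge $x_{10}y_3$ neither closes a short cycle nor yields an induced $6$-cycle, and the only way I see to kill it is to observe that it creates a fresh copy of the two-$7$-cycle configuration --- with $x_9$ replaced by $y_3$ --- to which the already-settled Case~\ref{itm:three-path-vertical} applies. A secondary subtlety is that $x_1y_1$ and $x_2y_2$ are entangled and must be removed jointly before either can be handled in isolation.
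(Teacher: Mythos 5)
Your proposal is correct and follows essentially the same route as the paper: the same list of candidate extra edges, the same reductions of $x_7y_2$, $x_9y_2$ and $x_{10}y_3$ to the already-settled Cases \ref{itm:three-path-vertical} and \ref{itm:three-path-diagonal}, the same use of \Cref{cor:vx-cycle-two-neighs} for $x_3y_3$, and the same induced $6$-cycle for $x_2y_2$. The only local difference is the edge $x_1y_1$, which the paper kills in one line by replacing $x_7$ with $y_1$ and reducing to Case \ref{itm:three-path-diagonal}, whereas you give a longer (but valid) two-step argument via \Cref{cor:vx-cycle-two-neighs} after first ruling out the coexistence of $x_1y_1$ and $x_2y_2$.
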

        \begin{proof}
			Suppose that there are additional edges.  These must be between
			$\{y_1, y_2, y_3\}$ and $V(H)$.   The only possible neighbour (that
			is not already accounted for) of $y_1$ in $H'$ is $x_1$. But then,
			by replacing $x_7$ by $y_1$, we reach a contradiction to Case
			\ref{itm:three-path-diagonal}.
            
			The only possible additional neighbours of $y_3$ in $H$ are $x_3$
			and $x_{10}$. If $y_3$ is adjacent to $x_3$, then $x_4$ has two
			neighbours in $(x_1 x_2 x_3 y_3 x_8 x_9 x_{10})$ whereas $x_5$ has
			none, a contradiction to \Cref{cor:vx-cycle-two-neighs}.  If $y_3$
			is adjacent to $x_{10}$ then, by replacing $x_9$ with $y_3$, we
			reduce to Case \ref{itm:three-path-vertical}.

			The only possible additional neighbours of $y_2$ in $H$ are $x_2,
			x_7, x_9$.  If $y_2$ is adjacent to $x_9$ or $x_7$, we reduce to
			Case~\ref{itm:three-path-vertical}
			or~\ref{itm:three-path-diagonal}, respectively.  Finally, if $y_2$
			is adjacent to $x_2$ then $(x_6 x_7 x_1 x_2 y_2 y_1)$ is an induced
			$6$-cycle, a contradiction to \Cref{lem:induced-cycle}.
        \end{proof}

        \begin{claim} \label{claim:four-path-no-three-neighs}
            No vertex in $G$ has more than two neighbours in $H''$.
        \end{claim}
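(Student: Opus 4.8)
The goal is to show that no vertex $u$ of $G$ has three or more neighbours in $H''$, where $H'' = H' \setminus \{x_5, x_7, y_3\}$ is the $12$-vertex graph induced on $\{x_1, x_2, x_3, x_4, x_6, x_8, x_9, x_{10}, y_1, y_2\}$ (note $|V(H'')| = 12$, which is the number one needs for the double-counting argument against $\delta(G) > n/5$). The approach is the same local case analysis used throughout the section: suppose $u$ has at least three neighbours in $H''$ and derive a contradiction, repeatedly invoking \Cref{lem:induced-cycle}, \Cref{cor:vx-cycle-two-neighs}, and the well-behavedness of the relevant subgraphs. Crucially, by \Cref{claim:no-additional-edges} the graph $H'$ has exactly the edges of $H$ together with the path $x_6 y_1 y_2 y_3 x_8$, so $H''$ has maximum degree $3$ and its edge set is completely explicit; this is what makes the enumeration tractable.

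\textbf{Key steps.} First I would record that $H'$ is well-behaved: this follows by the now-familiar argument (as in \Cref{claim:case-one-well-behaved}, \Cref{claim:case-two-well-behaved}), checking that the only potentially bad pairs of vertices in $H'$ — those whose neighbourhoods in $H'$ are not both contained in a common vertex's neighbourhood — cannot have a common neighbour in $G$ without producing an induced $6$-cycle or violating \Cref{cor:vx-cycle-two-neighs}. Since $H'$ is well-behaved and has maximum degree $3$, any vertex $u$ has at most two neighbours in $H' \cap H'' $ unless $u$ realises the full neighbourhood of some degree-$3$ vertex of $H'$ — but that vertex would then be one of $x_5, x_7, y_3$ (which are deleted) or a vertex whose three $H'$-neighbours do not all survive in $H''$. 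So if $u$ has three neighbours in $H''$, at least one must be $y_1$ or $y_2$, and I would then split into the two (symmetric-ish) cases $u \sim y_1$ and $u \sim y_2$. In the case $u \sim y_1$: the neighbours of $y_1$ in $H'$ are $x_6, y_2$, so $u$'s other $H''$-neighbours lie among $\{x_1, x_2, x_3, x_4, x_8, x_9, x_{10}, x_6, y_2\}$, and triangle-/pentagon-freeness together with \Cref{cor:vx-cycle-two-neighs} (applied to suitable $7$-cycles through $y_1$, e.g.\ $(y_1 x_6 x_5 x_4 x_3 x_2 x_1)$ or the cycle $(x_1 x_2 x_3 x_4 x_8 x_9 x_{10})$) should kill all configurations with $|N(u, H'')| \ge 3$; the configuration where $u \sim y_1$ and $u$ has two further neighbours among the $x_i$ typically forces $u$ to be adjacent to $x_7$ or to create an induced $6$-cycle via a $6$-cycle like $(u y_1 x_7 x_1 \cdots)$. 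The case $u \sim y_2$ is handled analogously, using that $y_2$'s surviving $H'$-neighbours are just $y_1$, and that a second neighbour near $\{x_1, x_4\}$ combined with $y_2$ produces an induced $6$-cycle or reduces to an earlier case.

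\textbf{Main obstacle.} The real work is the bookkeeping: because $H''$ is asymmetric (the deletion of $x_5, x_7, y_3$ breaks the symmetry of the original two-$7$-cycle picture), one cannot lean as heavily on symmetry as in earlier claims, so there are several sub-cases for where $u$'s third neighbour sits, and each must be individually excluded by exhibiting a $C_3$, a $C_5$, an induced $C_6$, or a $7$-cycle contradicting \Cref{cor:vx-cycle-two-neighs}. I expect the trickiest sub-case to be when $u$ has two neighbours among $\{x_1, x_2, x_3, x_4\}$ (forming a chord of the common $3$-path $x_1 x_2 x_3 x_4$) plus one neighbour in $\{y_1, y_2\}$: here one must be careful to pick the right $7$-cycle — for instance, replacing an $x_i$ by $u$ and reading off a forbidden configuration, or using the cycle $(x_1 x_2 x_3 x_4 x_8 x_9 x_{10})$ with $u$ playing the role of the vertex with no neighbour and some $x_j$ having two — to land the contradiction. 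Once every sub-case is dispatched, \Cref{claim:four-path-no-three-neighs} follows, and then double counting the edges between $V(H'')$ and $V(G) \setminus V(H'')$ gives more than $12(n/5 - 2)$ on one side and at most $2(n - 12)$ on the other, a contradiction completing Case \ref{itm:four-path} and hence the proof of \Cref{lem:two-cycles}.
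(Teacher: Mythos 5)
There is a genuine gap: your proposal identifies the right toolbox (\Cref{claim:two-cycles-well-behaved}, \Cref{claim:no-additional-edges}, \Cref{lem:induced-cycle}, \Cref{cor:vx-cycle-two-neighs}, then double counting) but never actually carries out the argument, and the one shortcut you do commit to is unsubstantiated. You assert that $H'$ is well-behaved ``by the now-familiar argument''; note that this assertion is essentially the claim itself, and in fact stronger: if $H'$ were well-behaved the claim would follow at once, since the degree-$3$ vertices of $H'$ are $x_1,x_4,x_6,x_8$ (not $x_5,x_7,y_3$, as you write --- those have degree $2$) and each of their neighbourhoods loses at least one vertex in $H''$, so your subsequent case analysis would be redundant. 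But proving well-behavedness of $H'$ requires ruling out common neighbours of pairs such as $\{y_1,x_2\}$, $\{y_2,x_1\}$, $\{y_2,x_3\}$, and the tools you cite (triangle/pentagon-freeness, induced $6$-cycles, \Cref{cor:vx-cycle-two-neighs}) do not close these cases by themselves: for instance, for a common neighbour $u$ of $y_2$ and $x_3$, \Cref{lem:induced-cycle} applied to $(u x_3 x_4 x_8 y_3 y_2)$ only forces $u x_8\in E(G)$, and one then finishes by the replacement trick --- $u$ now plays the role of $y_3$ on the new $4$-path $x_6 y_1 y_2 u x_8$, so \Cref{claim:no-additional-edges} forbids the edge $u x_3$. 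This mechanism (force an edge via \Cref{lem:induced-cycle}, then replace a vertex of the configuration by $u$ and contradict \Cref{claim:no-additional-edges}) is exactly how the paper kills the decisive sub-cases (e.g.\ $u$ adjacent to $y_1, x_2, x_{10}$: the $6$-cycle $(u x_2 x_1 x_7 x_6 y_1)$ forces $u x_7$, then replacing $x_1$ by $u$ makes $y_1 x_1$ an edge, contradicting \Cref{claim:no-additional-edges}); in your write-up it appears only as a speculative option for ``the trickiest sub-case''.

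Beyond the missing case analysis, several of the concrete details you give are wrong. $H''$ has $10$ vertices, not $12$ (your own list contains ten), and the double count should be ``more than $10(n/5-2)$'' against ``at most $2(n-10)$''. More importantly, the cycles you propose in the $u\sim y_1$ case use adjacencies from Case \ref{itm:three-path-diagonal} rather than Case \ref{itm:four-path}: here $y_1$ is adjacent to $x_6$, not $x_7$ and not $x_1$, so $(y_1 x_6 x_5 x_4 x_3 x_2 x_1)$ is not a cycle and $(u y_1 x_7 x_1 \cdots)$ is not one either; the correct $6$-cycle through $y_1$ goes via $x_6$, as above. Since none of the sub-cases is actually dispatched, and the sketched steps lean on non-existent edges and on an unproved well-behavedness statement that already contains the whole difficulty, the proposal does not yet constitute a proof of \Cref{claim:four-path-no-three-neighs}.
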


        \begin{proof}
			Suppose that there is a vertex $u$ in $G$ with three neighbours in
			$H''$.  Since $H$ is \wellbehaved{} (see
			\Cref{claim:two-cycles-well-behaved}), $u$ must be a neighbour of
			either $y_1$ or $y_2$.

			Suppose first that $u$ is a neighbour of $y_1$.  The other possible
			neighbours of $u$ in $H''$ are $x_2, x_3, x_9, x_{10}$.  Out of
			these four vertices, the only two that may have a common neighbour
			are $x_2$ and $x_{10}$.  By considering the $6$-cycle $(u x_2 x_1
			x_7 x_6 y_1)$, it follows that $u$ is adjacent also to $x_7$,
			i.e.~$u$ is adjacent to $x_2, x_7, x_{10}, y_1$.  By replacing
			$x_1$ by $u$, we may assume that $y_1$ is adjacent to $x_1$, a
			contradiction to \Cref{claim:no-additional-edges}.
            
			We may now assume that $u$ is adjacent to $y_2$.  The other
			possible neighbours of $u$ in $H''$ are $x_1, x_2, x_3, x_6, x_8,
			x_{10}$.  If $u$ is adjacent to $x_6$ or $x_8$, then by replacing
			$y_1$ or $y_3$ by $u$ we see that $u$ cannot have any additional
			neighbours in $H''$: otherwise we reach a contradiction to
			\Cref{claim:no-additional-edges}.  It follows that $u$ is not
			adjacent to $x_1$, because otherwise, $(u x_1 x_7 x_6 y_1 y_2)$ is
			an induced $6$-cycle. Similarly, $u$ is not adjacent to $x_{10}$
			(see $(x_{10} x_9 x_8 y_3 y_2 u)$).  This completes the proof of
			\Cref{claim:four-path-no-three-neighs}, since the only remaining
			possible neighbours of $u$ are $x_2$ and $x_3$, and these do not
			have a common neighbour.
        \end{proof}

		By \Cref{claim:four-path-no-three-neighs}, we reach a contradiction
		using the usual double counting argument.  This completes the proof of
		\Cref{lem:two-cycles}.
    \end{proof}
    
\section{The proof of \Cref{thm:hom-to-family}}\label{sec:main-theorem}

	In this section we shall finish the proof of \Cref{thm:hom-to-family} by
	combining \Cref{thm:vx-cycle} along with some facts we have obtained
	regarding forbidden substructures in maximal \cfree{} graphs of large
	minimum degree. Recall that $F_k$ is the graph obtained from a $(5k -
	3)$-cycle (an edge, when $k = 1$) by adding all chords joining vertices at
	distances along the cycle of the form $5j +1$ for $j= 1, \ldots , k- 2$.
	First, we prove the following proposition, which records several useful
	properties of the graphs $F_k$ that we shall need in the sequel.
	
    \begin{prop} \label{prop:Fk}
        The following properties of $F_k$ hold.
		\begin{enumerate}[(a)]
            \item \label{itm:F-k-two-vs-cycle}
                Every two distinct vertices in $F_k$ ($k \ge 2$) are contained in a
                $7$-cycle.
            \item \label{itm:F-k-paths-between-vs}
                Let $x$ and $y$ be distinct vertices in $F_k$. Then there is a
                path of length $1$, $3$ or $5$ between $x$ and $y$.
            \item \label{itm:F-k-num-neighs}
                Let $F$ be a copy of $F_k$ in a maximal \cfree{} graph $G$ with
                $\delta(G) > n/5$. Then every vertex in $G$ has either $k - 1$
                or $k$ neighbours in $F$.
            \item \label{itm:F-k-neighbd}
                Let $F$ be a copy of $F_k$ in a maximal \cfree{} graph $G$ 
                with $\delta(G) > n/5$. Denote the
                vertices of $F$ by $x_1, \ldots, x_{5k - 3}$ and its edges by the pairs
                $x_i x_j$ for which $|i - j| \equiv   1 \pmod{5}$.
                
                Then for every vertex $u$ in $G$ there is a vertex $x_i$ in $F$
                such that the neighbours of $u$ in $F$ are the neighbours of
                $x_i$ in $F$, except at most one of $x_{i - 1}$ and $x_{i + 1}$.
                In particular, $F$ is \wellbehaved{} as a subgraph of $G$.
        \end{enumerate}
    \end{prop}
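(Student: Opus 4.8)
The plan is to handle the four parts in order, since each builds on the previous ones and on the structural results already established.

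For part \ref{itm:F-k-two-vs-cycle}, I would work directly with the cyclic description of $F_k$: label the vertices $x_1, \ldots, x_{5k-3}$ around the defining cycle, with $x_i \sim x_j$ whenever $|i-j| \in \{1\} \cup \{5j+1 : 1 \le j \le k-2\} \pmod{5k-3}$. Given two vertices, one reduces by the rotational symmetry to the case $x_1, x_t$ for some $t$, and then exhibits an explicit $7$-cycle through both: the idea is to alternate steps of length $1$ along the cycle with chord-jumps of length $\equiv 1 \pmod 5$, choosing the jump lengths so that the six step-lengths sum to $0 \pmod{5k-3}$ and the partial sums hit $x_1$ and $x_t$. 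Since a $7$-cycle gives six steps and we have a lot of freedom in choosing chord lengths (any value $5j+1$), a short case analysis on $t \pmod 5$ (and the size of $t$) produces the cycle; this is the kind of routine but slightly fiddly check that I would organise by residue class. Part \ref{itm:F-k-paths-between-vs} follows from the same circle-of-steps bookkeeping: between $x_1$ and $x_t$ one wants a walk of length $1$, $3$ or $5$ using steps $\pm 1$ and chords of length $\equiv 1 \pmod 5$; since every generator has length $\equiv 1 \pmod 5$ or $\equiv -1 \pmod 5$, a walk of length $\ell$ reaches a vertex whose index differs from $1$ by something $\equiv \ell, \ell-2, \ldots, -\ell \pmod 5$, and one checks this covers all residues for $\ell \in \{1,3,5\}$; then one must also check the distance is achievable within the cycle length, again a finite verification. (Alternatively, one can invoke that $F_k$ is homomorphic to $F_k$ and vertex-transitive, reducing to showing $x_1$ reaches every vertex by a path of length $1$, $3$ or $5$.)

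Part \ref{itm:F-k-num-neighs} is where the minimum-degree hypothesis enters. First I would show every vertex $u \in V(G)$ has \emph{at most} $k$ neighbours in $F$: by part \ref{itm:F-k-two-vs-cycle} any two vertices of $F$ lie in a common $7$-cycle, and by \Cref{prop:well-behaved} together with \Cref{cor:vx-cycle-two-neighs} the neighbourhood of $u$ inside any $7$-cycle of $F$ is contained in $\{x_{i-1}, x_{i+1}\}$ for some $i$; piecing these local constraints together forces $N(u) \cap V(F)$ to lie inside $N_F(x_i)$ for a single $i$ (this is essentially part \ref{itm:F-k-neighbd}, which I would actually prove first and then read off \ref{itm:F-k-num-neighs} as a corollary — see below), and $F_k$ is $k$-regular. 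For the lower bound $|N(u) \cap V(F)| \ge k-1$, I would double count edges between $V(F)$ and $V(G) \setminus V(F)$: each of the $5k-3$ vertices of $F$ has minimum degree $> n/5$ and exactly $k$ neighbours inside $F$, hence more than $n/5 - k$ neighbours outside, giving more than $(5k-3)(n/5-k)$ crossing edges; if some $u$ had at most $k-2$ neighbours in $F$, then — combined with the fact (again from \ref{itm:F-k-neighbd}) that every vertex has at most $k$ — one derives an upper bound on crossing edges of roughly $k(n - (5k-3))$, and comparing the two estimates yields a contradiction for $n$ large. The arithmetic has to be done carefully (the slack is only linear in $n$ with a small multiplicative gap), so I would state it as: $(5k-3)(n/5 - k) < e(V(F), V(G)\setminus V(F)) \le k(n - 5k + 3) - 1$ if such a $u$ exists, and check this fails.

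Part \ref{itm:F-k-neighbd} is the heart of the matter and, I expect, the main obstacle. The claim is that $N(u) \cap V(F)$ is, for some $i$, equal to $N_F(x_i)$ minus at most one of $x_{i-1}, x_{i+1}$. The strategy: by \ref{itm:F-k-two-vs-cycle}, for any pair of neighbours $x_a, x_b$ of $u$ in $F$ there is a $7$-cycle $C$ of $F$ containing both, so $u$ has two neighbours in $C$; by \Cref{prop:well-behaved} those two neighbours are $x_{c-1}, x_{c+1}$ for the index $c$ of some vertex of $C$, i.e. $\{a,b\} = \{c-1, c+1\}$ in the cyclic labelling — meaning any two neighbours of $u$ in $F$ are at cyclic distance $2$ along the defining $(5k-3)$-cycle, or more precisely share a common $F$-neighbour that is adjacent to both along the cycle. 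From this I want to conclude the whole set $N(u)\cap V(F)$ is ``centred'' at one $x_i$. The delicate point is ruling out a vertex $u$ with, say, neighbours $x_{i-1}, x_{i+1}$ \emph{and} some far-away $x_j$: such a configuration would create a $7$-cycle in which $u$ has only one neighbour, or an induced $6$-cycle, contradicting \Cref{thm:vx-cycle} or \Cref{lem:induced-cycle} respectively — so I would enumerate the possible positions of a third neighbour relative to $x_{i-1}, x_{i+1}$ and kill each using one of these two lemmas plus \Cref{cor:vx-cycle-two-neighs}. Once $N(u) \cap V(F) \subseteq N_F(x_i) = \{x_{i-1}, x_{i+1}\} \cup \{x_{i \pm (5j+1)}\}$, I still must show that if $u$ misses both $x_{i-1}$ and $x_{i+1}$ then it in fact misses the whole neighbourhood of $x_i$ unless it can be re-centred — here one uses the overlap structure of $N_F(x_i)$ and $N_F(x_{i'})$ for nearby $i'$ and the maximality of $G$ (an omitted edge $u x_{i-1}$ forces a $4$-path, which one pushes back into $F$). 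This last bookkeeping — tracking exactly which of $x_{i-1}, x_{i+1}$ may be dropped — is what will require the most care, and I would organise it as a short sequence of sub-claims, each an application of the $6$-cycle or $7$-cycle forbidden-subgraph lemmas to an explicit configuration.
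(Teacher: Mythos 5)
The decisive gap is in your argument for the lower bound in part (\ref{itm:F-k-num-neighs}). The double count you propose cannot work: the minimum degree condition gives more than $(5k-3)(n/5-k)$ edges between $V(F)$ and $V(G)\setminus V(F)$, while the upper bound of at most $k$ neighbours per outside vertex gives at most $k(n-5k+3)$ such edges; since $(5k-3)/5=k-3/5<k$, the lower estimate falls short of the upper one by about $3n/5$, so assuming one vertex has at most $k-2$ neighbours (which lowers the upper bound by only $2$) produces no contradiction. The inequality you intend to refute, $(5k-3)(n/5-k)\le k(n-5k+3)-1$, is in fact true for all large $n$. More fundamentally, the degree condition only forces the \emph{average} number of $F$-neighbours of an outside vertex to be about $k-3/5$, which is compatible with a positive proportion of vertices having $k-2$ or fewer neighbours, so no global counting argument can deliver the pointwise bound $\ge k-1$. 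The paper instead proves (\ref{itm:F-k-num-neighs}) by induction on $k$: the base case $F_2=C_7$ is exactly \Cref{thm:vx-cycle} (at least one neighbour) together with \Cref{prop:well-behaved} (at most two), and in the induction step one deletes five consecutive vertices of $F$, chosen to contain a neighbour of $u$ for the lower bound (and chosen via a pigeonhole on intervals for the upper bound), to land in a copy of $F_{k-1}$. Your plan never invokes \Cref{thm:vx-cycle} at this point, and without it the bound $\ge k-1$ is out of reach.

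There are secondary problems as well. In your sketch of (\ref{itm:F-k-neighbd}), a $7$-cycle in which $u$ has only one neighbour does \emph{not} contradict \Cref{thm:vx-cycle}, which guarantees at least one neighbour, not two; the tools that do the work in such configurations are \Cref{cor:vx-cycle-two-neighs} and \Cref{lem:induced-cycle}, or (as in the paper) an induction that removes five consecutive vertices and uses (\ref{itm:F-k-num-neighs}) -- note the paper derives (\ref{itm:F-k-neighbd}) \emph{from} (\ref{itm:F-k-num-neighs}), the reverse of your intended ordering, by splitting according to whether $u$ has $k-1$ or $k$ neighbours; your "re-centring'' step is left as a vague to-do precisely where the real work lies. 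For (\ref{itm:F-k-paths-between-vs}), the walk/residue bookkeeping is both unnecessary and shaky (a walk of odd length need not contain an odd path): the statement is immediate from (\ref{itm:F-k-two-vs-cycle}), since the two arcs of a common $7$-cycle joining $x$ and $y$ have lengths summing to $7$, so one is a path of length $1$, $3$ or $5$. Finally, (\ref{itm:F-k-two-vs-cycle}) is cleaner by induction -- if the indices differ by at most $6$, take the $7$-cycle $(x_i x_{i+1}\ldots x_{i+6})$ using the chord $x_ix_{i+6}$; otherwise delete five consecutive vertices strictly between them to obtain a copy of $F_{k-1}$ containing both -- rather than by an explicit residue-class construction (and a $7$-cycle has seven steps, not six).
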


    \begin{proof}

		To see (\ref{itm:F-k-two-vs-cycle}), denote the vertices and edges of
		$F_k$ as above (see (\ref{itm:F-k-neighbd})). Note that
		(\ref{itm:F-k-two-vs-cycle}) holds for $k = 2$.  Now suppose that $k
		\ge 3$ and let $x_i$ and $x_j$ be two distinct vertices in $F_k$.
		Suppose that $i < j$. If $j \le i + 6$, the two vertices are in the
		$7$-cycle $(x_i \ldots x_{i + 6})$.  Otherwise, $x_i$ and $x_j$ are two
		vertices in the graph induced by $F_k \setminus \{x_{i + 1}, \ldots,
		x_{i + 6}\}$ which is a copy of $F_{k - 1}$. Then, by induction, $x_i$
		and $x_j$ are in a copy of a $7$-cycle in $F_k$. Next, observe
		that (\ref{itm:F-k-paths-between-vs}) follows immediately from
		(\ref{itm:F-k-two-vs-cycle}).

		We prove (\ref{itm:F-k-num-neighs}) by induction on $k$.  For $F_1 =
		K_2$ the result is clear, and for $F_2 = C_7$ the result follows from
		\Cref{thm:vx-cycle}.  So suppose that $k\geq 3$ and the result holds
		for smaller values of $k$. Let $F$ be a copy of $F_k$ in $G$ as in the
		statement of (\ref{itm:F-k-num-neighs}), denote its vertices and edges
		as before, and let $u$ be a vertex of $G$.  Assume first that $u$ has
		$k+1$ neighbours in $F$. If $u$ has at most one neighbour in some
		consecutive interval $x_i,\ldots , x_{i+4}$ of five vertices, then $u$
		has at least $k$ neighbours in the copy of $F_{k-1}$ induced on
		$F\setminus \{x_i, \ldots , x_{i+4}\}$, a contradiction to the
		induction hypothesis.  Therefore, $u$ has at least two neighbours in
		every consecutive interval of five vertices. Suppose, without loss of
		generality, that $u$ is adjacent to $x_1$.  Then $u$ has at least
		$1+2(k-2) \ge k$ neighbours (recall that $k\geq 3$) in the copy of
		$F_{k-1}$ induced on $F\setminus \{x_{5k-7}, \ldots , x_{5k-3}\}$, a
		contradiction. If $u$ has at most $k-2$ neighbours in $F$, one of which
		is, say, $x_1$, then $u$ has at most $k-3$ neighbours in the copy of
		$F_{k-1}$ induced on $F\setminus \{x_1,\ldots , x_5\}$, contradicting
		the induction hypothesis.  It follows that $u$ has either $k-1$ or $k$
		neighbours in $F$, as required. 
        
		Finally, let us prove (\ref{itm:F-k-neighbd}). Let $F$ and $G$ be as in
		the statement of (\ref{itm:F-k-neighbd}), and suppose that $u$ has
		$k-1$ neighbours in $F$.  Then there must exist five consecutive
		vertices $x_\ell, \ldots , x_{\ell+4}$ which are not neighbours of $u$.
		Let $F'$ be the copy of $F_{k-1}$  given by $F\setminus \{x_\ell,
		\ldots , x_{\ell+4}\}$.  Then by induction there is a vertex $x$ of
		$F'$ such that $u$ is joined to all neighbours of $x$ in $F'$. We
		claim that $x=x_{\ell-1}$ or $x=x_{\ell+5}$. Indeed, note that $x$
		must be adjacent to precisely one of $x_{\ell-1}, x_{\ell+5}$ (it
		cannot be adjacent to both); otherwise, $u$ has no neighbour in the
		$7$-cycle $(x_{\ell-1}x_\ell \ldots x_{\ell +5})$, contradicting
		\Cref{thm:vx-cycle}. Suppose, without loss of generality, that $x$
		is joined to $x_{\ell-1}$.  Since $u$ must have a neighbour in the
		$7$-cycle $(x_\ell x_{\ell+1}\ldots x_{\ell+6})$, $u$ is also
		adjacent to $x_{\ell + 6}$.  It follows that $x = x_{\ell + 5}$ and
		$u$ has $k-1$ neighbours in $F$, which are precisely the neighbours
		of $x_{\ell+5}$, except for $x_{\ell+4}$.  Now, suppose that $u$
		has precisely $k$ neighbours in $F$. Then we may find two
		neighbours of $u$ that are at distance at most four. We claim that
		this implies there must be two neighbours at distance two apart.
		Indeed, they cannot be at distance three (this would produce a
		$5$-cycle). So suppose these neighbours are at distance four and
		suppose they are $x_i$ and $x_{i+4}$. Then
		$(ux_{i+4}x_{i+5}x_{i+6}x_i)$ is a $5$-cycle in $G$, a
		contradiction.  Accordingly, we may assume without loss of
		generality that $u$ is adjacent to both $x_2$ and $x_{5k-3}$.
		Consider the copy of $F_{k-1}$ given by $F\setminus \{x_3,\ldots ,
		x_7\}$ and apply induction. Clearly, we must have $u$ joined to
		$x_7$ ($u$'s only possible neighbour in $\{x_3,\ldots , x_7\}$) and
		the neighbourhood of $u$ in $F$ is precisely the neighbourhood of
		$x_1$ in $F$. This completes the proof of (\ref{itm:F-k-neighbd}).   
    \end{proof}
		
	We actually prove the followinbg theorem, which clearly implies
	\Cref{thm:hom-to-family}. It is the odd-girth $7$ analogue of a result of
	Chen, Jin, and Koh~\cite{chen-jin-koh} concerning triangle-free graphs of
	large minimum degree.
	
    \begin{thm}\label{thm:hom-or-Fk}
        \supposemax{}
		For every integer $k\geq 2$, if $G$ contains no copy of $F_k$, then $G$
		is homomorphic to $F_{k-1}$.	
    \end{thm}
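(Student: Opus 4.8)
The plan is to induct on $k$, using \Cref{thm:vx-cycle} (for the base case $k=2$) and \Cref{prop:Fk} as the main engine. Suppose $G$ is maximal $\{C_3,C_5\}$-free with $\delta(G) > n/5$ and contains no copy of $F_k$. We want a homomorphism $G \to F_{k-1}$. If $G$ also contains no copy of $F_{k-1}$, we are done by induction (a homomorphism $G \to F_{k-2}$ composes with the inclusion $F_{k-2} \hookrightarrow F_{k-1}$, which exists since $F_{k-2}$ is an induced subgraph of $F_{k-1}$ — indeed $F_{k-2} = F_{k-1} \setminus \{x_\ell, \ldots, x_{\ell+4}\}$). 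So we may assume $G$ contains a copy $F$ of $F_{k-1}$; fix one, with vertices $x_1, \ldots, x_{5k-8}$ labelled as in \Cref{prop:Fk}.

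The natural candidate for the homomorphism is the map sending each $u \in V(G)$ to a vertex $x_i$ witnessing property (\ref{itm:F-k-neighbd}) of \Cref{prop:Fk}: by that property, $F$ is well-behaved, so $N_G(u, F) \subseteq N_F(x_i)$ for some $i$, and the choice is essentially unique. Define $f(u)$ to be this $x_i$ (choosing canonically when $N_G(u,F) = N_F(x_i)$ versus a strict subset — e.g. always take the index given by the explicit description in the proof of (\ref{itm:F-k-neighbd})). For $u \in V(F)$ itself, $f(u) = u$. The crux is then to verify that $f$ is a homomorphism: if $uv \in E(G)$, then $f(u)f(v) \in E(F_{k-1})$.

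The key step — and the main obstacle — is this edge-preservation check. The idea: given an edge $uv$ in $G$, consider the neighbourhoods $N_G(u,F)$ and $N_G(v,F)$. Since $v \in N_G(u)$, and $u$ has $k-2$ or $k-1$ neighbours in $F$ (by (\ref{itm:F-k-num-neighs}) applied to $F_{k-1}$), and likewise for $v$, one must control how $f(u)$ and $f(v)$ relate. I expect the argument to run as follows: if $f(u)f(v) \notin E(F_{k-1})$, then by part (\ref{itm:F-k-paths-between-vs}) there is a path of length $3$ or $5$ between $f(u)$ and $f(v)$ in $F_{k-1}$; combining this path with the edge $uv$ and with the (near-maximal) neighbourhoods of $u,v$ in $F$, one should either build a forbidden short odd cycle, or build a copy of $F_k$ inside $G$ (by "adding" $u$ and $v$ to $F$ in the right positions — this is where the absence of $F_k$ is used), or contradict \Cref{thm:vx-cycle} by exhibiting a $7$-cycle missing a neighbour of $u$ or $v$. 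The delicate point is handling the vertices $u$ (and $v$) that have only $k-2$ neighbours in $F$, i.e. that "miss" a full block of five consecutive vertices $x_\ell, \ldots, x_{\ell+4}$: here the maximality of $G$ forces a short path closing up that gap, and one must check this path, together with $F$, assembles into an $F_k$ unless $u$ maps consistently. I would organise this as a short sequence of claims mirroring \Cref{prop:Fk}(\ref{itm:F-k-neighbd}): first pin down $f(u)$ precisely in terms of which block $u$ misses (if any), then show that for an edge $uv$, the missed blocks of $u$ and $v$ (or their full neighbourhoods) are "compatible" in the cyclic structure, and finally read off that $f(u) \sim f(v)$. Once $f$ is shown to be a homomorphism, the theorem follows, since $f$ maps into $F_{k-1}$ by construction. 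The bulk of the work is the case analysis in the edge-preservation claim, and I expect it to lean repeatedly on \Cref{cor:vx-cycle-two-neighs} and the no-induced-$6$-cycle lemma in the style already used throughout Section~\ref{sec:7-cycles-3-path}.
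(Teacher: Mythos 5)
Your induction skeleton matches the paper's, but the heart of the matter --- verifying that your candidate map preserves edges --- is not proved; it is only announced (``I expect the argument to run as follows\dots''). That verification is essentially the entire content of the theorem, and your sketch omits the one mechanism that actually makes it work: the place where ``$G$ contains no $F_k$'' enters is an explicit construction in which edge-maximality supplies a $4$-path between two non-adjacent vertices playing the same role, this path together with the copy of $F_{k-1}$ closes into a $(5k-3)$-cycle, and \Cref{lem:cycle-chords} forces all the missing chords, producing a copy of $F_k$ (this is \Cref{claim:nbhds-of-y-i} in the paper). Nothing in your proposal identifies which edge configurations require this, supplies the extra vertices needed to reach $5k-3$ of them, or checks the chord lengths; ``one must check this path, together with $F$, assembles into an $F_k$'' is precisely the work to be done. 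Note also that a test case such as $G=F_k$ itself (with $F$ a copy of $F_{k-1}$ inside it) shows your naive map cannot in general be a homomorphism, so any completion of your plan must contain such an $F_k$-building step for certain edges among vertices outside $F$.

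There is a second, structural gap: your map is not well-defined in the canonical way you claim. For a vertex $u$ with only $k-2$ neighbours in $F$, the witness in \Cref{prop:Fk}(\ref{itm:F-k-neighbd}) is genuinely ambiguous --- the same trace equals $N_F(x_i)\setminus\{x_{i-1}\}$ and $N_F(x_j)\setminus\{x_{j+1}\}$ for two different indices --- and which reading is correct depends on $u$'s neighbours \emph{outside} $F$. Fixing a canonical reading and then checking edges pairwise is not enough: you would need to show that one global assignment satisfies all incident edges simultaneously. The paper sidesteps exactly this by working not with a single copy of $F_{k-1}$ but with a \emph{vertex-maximal blow-up} $H$ of $F_{k-1}$: vertices adjacent to $k-1$ of its classes are either absorbed (contradicting maximality of $H$) or yield a copy of $F_k$ via the construction above; the remaining vertices are fully joined to exactly $k-2$ classes, are partitioned into the sets $Y_i$, and \Cref{claim:Yi-Yj-edges} kills all incompatible $Y_i$--$Y_j$ edges using $5$-cycles and \Cref{lem:induced-cycle}, whence $G$ is itself a blow-up of $F_{k-1}$. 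Your proposal would need analogues of both halves of this argument; as written it contains neither.
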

    
    \begin{proof}
		We shall use induction on $k$. For $k=2$, it is easy to show that if
		$G$ contains no $7$-cycle, then it must be bipartite. So fix $k\geq 3$
		and suppose the result holds for smaller values of $k$. Let $G$ be as
		in the statement of the theorem and suppose it contains no copy of
		$F_k$. If $G$ contains no copy of $F_{k-1}$, then by induction $G$ is
		homomorphic to $F_{k-2}$. But $F_{k-1}$ contains $F_{k-2}$, so we are
		done. Hence we may assume that $G$ contains a copy of $F_{k-1}$. Let
		$H$ be a vertex-maximal blow-up of $F_{k-1}$ in $G$ with vertex classes
		$X_1, \ldots , X_{5k-8}$, where the edges of $H$ are $X_i-X_j$ edges
		for which $|i-j| \equiv 1\pmod{5}$. Our aim is to show that $G$ is a
		blow-up of $F_{k - 1}$, or, in other words, that $H$ spans all vertices
		in $G$.  Note that by (\ref{itm:F-k-num-neighs}) of \Cref{prop:Fk},
		every vertex in $V(G)\setminus V(H)$ has at most $k-1$ neighbours in
		$F_{k-1}$. 
        
		Suppose $u \in V(G)\setminus V(H)$ is adjacent to vertices in precisely
		$k-1$ of the classes of $H$. Without loss of generality, by
		(\ref{itm:F-k-neighbd}), we may assume that these classes are those in
		the neighbourhood of vertices in $X_1$, i.e.,~$X_2, X_7, \ldots ,
		X_{5k-8}$, and let $J = \{2, 7, \ldots, 5k-8\}$ be the set of indices
		$j$ such that $u$ has a neighbour in $X_j$.  We claim that $u$ must be
		adjacent to every vertex in each of these classes, contradicting the
		assumption that $H$ is a vertex-maximal blow-up in $G$.  Suppose this
		is not the case.  By (\ref{itm:F-k-num-neighs}), $u$ has a
		non-neighbour in at most one of the sets $X_j$ with $j \in J$ (indeed,
		otherwise we find a copy of $F_{k - 1}$ in which $u$ has at most $k -
		3$ neighbours).  Furthermore, by (\ref{itm:F-k-neighbd}), we may assume
		that this set is $X_2$. Let $y \in X_2$ be a neighbour of $u$ and let
		$z \in X_2$ be a non-neighbour of $u$.
                
		Owing to the missing edge $uz$, and by the edge-maximality of $G$,
		there must exist a $4$-path $uw_1w_2w_3z$ in $G$ between $u$ and $z$ (a
		$2$-path is impossible). Consider the $(5k-3)$-cycle $C =
		(uw_1w_2w_3zx_1x_{5k-8}\ldots x_3y)$, where $x_i \in X_i$ (see
		\Cref{fig:twelve-to-seventeen-cycle}). 

        \begin{figure}[h]
            \centering
            \includegraphics[scale = .85]{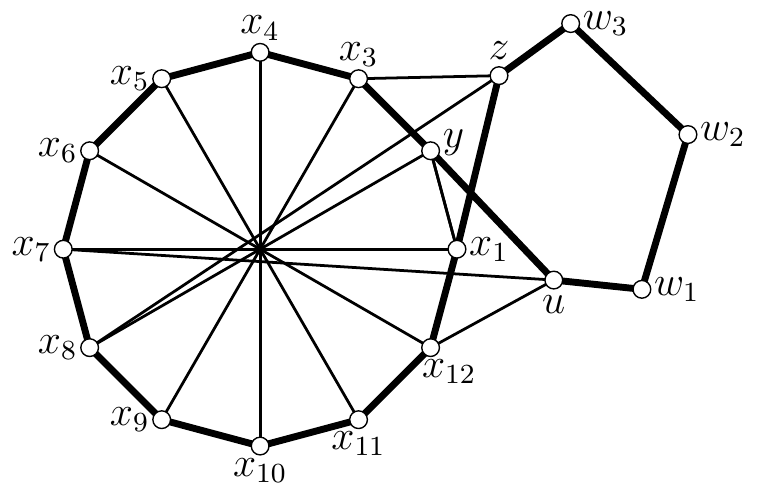}
            \caption{the $(5k - 3)$-cycle $C$ obtained from $u$ and $H$, $k = 4$}
            \label{fig:twelve-to-seventeen-cycle}
        \end{figure}

		Our aim is to show that $V(C)$ induces a copy of $F_k$, contrary to our
		assumption on $G$. Relabel the cycle $C$ in order as $(z_0z_1\ldots
		z_{5k-4})$, so that $z_0 = u, z_i = w_i$ for $i=1, 2, 3$, $z_4 = z,
		z_5= x_1$, $z_i = x_{5k-2-i}$ for $6\leq i \leq 5k-5$, and $z_{5k-4} =
		y$.  We must check that all chords of lengths $1+5t$ for $t=0, \ldots
		k-1$ are present in the graph induced on $V(C)$. Note that all possible
		chords of these lengths that are not incident with a vertex in $S=\{u,
		w_1, w_2, w_3\} = \{z_0, z_1, z_2, z_3\}$ are present, since all
		vertices in $V(C)\setminus S$ are in an appropriate copy of $F_{k-1}$.
		So we must check that all possible chords incident with a vertex in $S$
		are present. This is summarized in the following claim, where we
		temporarily revert to the original labelling of $C$:
        
        \begin{claim}\label{claim:nbhds-of-y-i}
        	The following hold:
            \begin{itemize}
                \item 
                    $N(u, C) = \{w_1, y\}\cup \{x_{5\ell+2}: 1\leq \ell \leq
                    k-2\}$.
                \item 
                    $N(w_1, C) = \{u, w_2\}\cup \{x_{5\ell +1}: 1\leq \ell
                    \leq k-2\}$.
                \item 
                    $N(w_2, C) = \{w_1, w_3\}\cup \{x_{5\ell}: 1\leq \ell
                    \leq k-2\}$.
                \item 
                    $N(w_3, C) = \{z, w_2\}\cup \{x_{5\ell-1}: 1\leq \ell
                    \leq k-2\}$.
            \end{itemize}
        \end{claim}	
        
        \begin{proof}
			Observe that the first item is immediate from our choice of $u$.
			Fix some $\ell$ with $1\leq \ell \leq k-2$. Note that every vertex
			in $X_2$ is joined to $x_{5(\ell-1)+3}=x_{5\ell-2}$. In particular,
			$y$ and $z$ are joined to $x_{5\ell -2}$. Consider the $12$-cycle
			$C'= (uw_1w_2w_3zx_{5\ell-2}\ldots x_{5\ell+2}x_1y)$, with two
			consecutive diagonals $yx_{5\ell-2}$ and $x_1z$. Observe that $C'$
			gives rise to another $12$-cycle $C'' = (uw_1w_2w_3zx_1x_{5\ell
			+2}x_{5\ell+1}\ldots x_{5\ell -2}y)$ with two consecutive diagonals
			$yx_1$ and $ux_{5\ell+2}$.  By \Cref{lem:cycle-chords}, either $C'$
			or $C''$ has all of its diagonals present. However, it cannot be
			$C'$, since $u$ cannot be adjacent to $x_{5\ell-1}$.  Therefore,
			$C''$ has all diagonals present: $w_1x_{5\ell+1}$, $w_2x_{5\ell}$,
			and $w_3x_{5\ell-1}$ are edges in $G$.  This completes the proof of
			\Cref{claim:nbhds-of-y-i}.
        \end{proof}
                
		It remains to check that \Cref{claim:nbhds-of-y-i} produces chords of
		the right lengths. We do this for chords incident with $w_1$; the other
		cases follow identically. Indeed, $w_1 = z_1$ so we must check that
		$z_1$ is joined to $z_{1+(1+5t)}$ for $t=0, 1, \ldots , k-1$. This is
		obviously true for $t=0$ and $t=k-1$, so let $1\leq t \leq k-2$. Then
		the above is equivalent to $w_1$ being joined to $x_{5k-2- (1+(1+5t))}
		= x_{5(k-t-1)+1}$, where $1\leq k-t-1 \leq k-2$, which clearly follows
		by \Cref{claim:nbhds-of-y-i}. Accordingly, there is a copy of $F_k$ in
		$G$ contrary to our assumption, so $u$ must be adjacent to every vertex
		in $X_j$ for all $j \in J$. But then we may place $u$ in $X_1$ and
		produce a blow-up of $F_{k-1}$ of larger order, which is a
		contradiction to the choice of $H$.  It follows that every vertex
		in $V(G)\setminus V(H)$ is adjacent to vertices in at most $k-2$ of the
		sets $X_i$. In fact, by (\ref{itm:F-k-neighbd}) of \Cref{prop:Fk}, it
		follows that every vertex in $V(G) \setminus V(H)$ is adjacent to
		precisely $k - 2$ of the $X_i$'s.
        
		Before proceeding, let us introduce a bit of notation and terminology.
		Let $\widetilde{H}$ be the graph with vertex set $\{X_1, \ldots ,
		X_{5k-8}\}$, where an edge $X_iX_j$ is present whenever the pair
		$\{X_i, X_j\}$ induces a complete bipartite graph in $G$. As
		$H$ is a blow-up of $F_{k-1}$, $\widetilde{H}$ is isomorphic to
		$F_{k-1}$. We say that a vertex $v$ is \emph{joined} to a subset $X
		\subseteq V(G)$ if $v$ is adjacent to every vertex of $X$.
        
		If a vertex $v$ is joined to vertices in the neighbourhood of $X_i$,
		then by (\ref{itm:F-k-neighbd}) of \Cref{prop:Fk} we have that $v$
		misses vertices in at most the two sets $X_{i-1}, X_{i+1}$; by
		symmetry, we may assume that each such vertex $v$ misses $X_{i-1}$.
		Thus the following sets $Y_i$, where $i = 1, \ldots, 5k - 8$, defined
		below, form a partition of $V(G) \setminus V(H)$ (see
		\Cref{fig:twelve-cycle-Yi}).  Note that each of these sets is
		independent (as $G$ is triangle-free):
        \begin{equation*}
            Y_i = \{u \in V(G) \setminus V(H): \text{$u$ is joined to $X_{i +
            1}, X_{i + 6}, \ldots, X_{i + 5k - 14}$} \text{ (indices modulo $5k-8$)}\}
        \end{equation*}

        \begin{figure}[h]
            \centering
            \includegraphics[scale = .85]{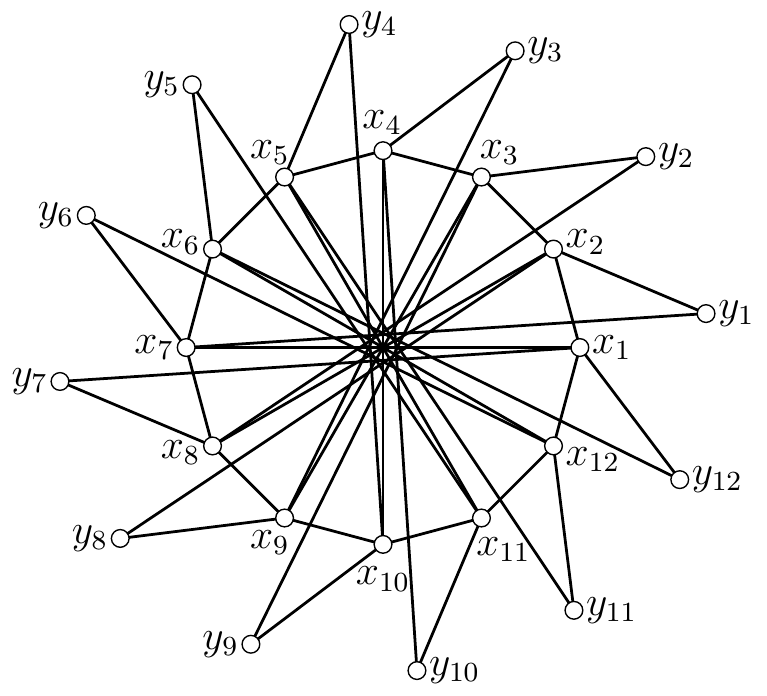}
            \caption{the sets $X_i$ and $Y_i$}
            \label{fig:twelve-cycle-Yi}
        \end{figure}
         
        \begin{claim}\label{claim:Yi-Yj-edges}
			Let $k\geq 3$ and $1\leq i, j \leq 5k-8$. If $j$ is such that $X_j
			\notin N_{\widetilde{H}}(X_i)$, then there are no edges between
			$Y_i$ and $Y_j$.
        \end{claim}
                
        \begin{proof}
			Without loss of generality, set $i=1$. Suppose $j$ is such that
			$X_j \notin N_{\widetilde{H}}(X_1)$.  We may assume that $j\neq 1$,
			as each $Y_i$ is independent. Then $j = 5l + r$, where $l \in
			\{0,\ldots, k - 3\}$ and $r \in \{3,4,5,6\}$.  Towards a
			contradiction, suppose there is an edge $y_1y_j$ between $Y_1$ and
			$Y_j$.  We consider four cases, according to the value of $r$.
			Suppose first that $r = 3$.  Then we find the following $5$-cycle
			$(y_1 x_2 x_{5l + 3} x_{5l + 4} y_j)$.  If $r = 4$, we find the
			induced $6$-cycle $(y_1 x_2 x_{5l + 3} x_{5l + 4} x_{5l + 5} y_j)$.
			If $r = 5$, there is, again, an induced $6$-cycle $(y_1 x_2 x_1
			x_{5l+7} x_{5l+6} y_j)$.  Finally, if $r = 6$, there is a $5$-cycle
			$(y_1 x_2 x_{5l + 8} x_{5l + 7} y_j)$.

			For each of the possible values of $r$, we reached a contradiction
			by showing that $G$ contains either a $5$-cycle or an induced
			$6$-cycle. \Cref{claim:Yi-Yj-edges} follows.
        \end{proof}

		Let $Z_i = X_i \cup Y_i$. Note that the sets $Z_i$ are independent and
		they partition $V(G)$.  It follows from \Cref{claim:Yi-Yj-edges} that
		there are no $Z_i - Z_j$ edges if $X_i X_j \notin E(\widetilde{H})$. By
		maximality of $G$, all $Z_i - Z_j$ edges are present if $X_i X_j \in
		E(\widetilde{H})$, implying that $G$ is a blow-up of $F_{k - 1}$. In
		particular, $G$ is homomorphic to $F_{k - 1}$, as required to complete
		the proof of \Cref{thm:hom-or-Fk}.
    \end{proof}
        
	We close this section by showing the following consequence of
	\Cref{thm:hom-or-Fk}.
    
	\begin{cor} \label{cor:hom-F-k-min-deg}
        Let $G$ be a \cfree{} graph on $n$ vertices with $\delta(G) >
        \frac{k}{5k - 3} \, n$. Then $G$ is homomorphic to $F_{k - 1}$.
    \end{cor}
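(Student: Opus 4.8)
The plan is to reduce to \Cref{thm:hom-or-Fk} by showing that $G$ cannot contain a copy of $F_k$. First I would observe that it suffices to prove the corollary when $G$ is maximal \cfree: given an arbitrary \cfree{} graph $G$ with $\delta(G) > \frac{k}{5k-3}\,n$, extend it to a maximal \cfree{} graph $G' \supseteq G$ on the same vertex set; then $\delta(G') \ge \delta(G) > \frac{k}{5k-3}\,n$, and any homomorphism from $G'$ to $F_{k-1}$ restricts to one from $G$ to $F_{k-1}$. I would also record the elementary inequality $\frac{k}{5k-3} > \frac{1}{5}$, so that all of the earlier results on maximal \cfree{} graphs of minimum degree larger than $n/5$ — in particular \Cref{prop:Fk} — are available. (For $k = 1$ there is nothing to prove, as Mantel's theorem already rules out a \cfree{} graph with $\delta(G) > n/2$; so I take $k \ge 2$.)

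Suppose now, for contradiction, that the maximal \cfree{} graph $G$ with $\delta(G) > \frac{k}{5k-3}\,n$ contains a copy $F$ of $F_k$. Recall that $F_k$ has exactly $5k-3$ vertices and is $k$-regular. The heart of the argument is a two-way count of the edges between $V(F)$ and $V(G) \setminus V(F)$. On one hand, every vertex $v \in V(F)$ has precisely $k$ neighbours inside $F$ (by $k$-regularity of $F_k$), hence more than $\frac{k}{5k-3}\,n - k$ neighbours outside $F$; summing over the $5k-3$ vertices of $F$ produces more than $(5k-3)\bigl(\tfrac{k}{5k-3}\,n - k\bigr) = kn - k(5k-3)$ such edges. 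On the other hand, by part~(\ref{itm:F-k-num-neighs}) of \Cref{prop:Fk}, every vertex of $G$ — in particular every vertex of $V(G) \setminus V(F)$ — has at most $k$ neighbours in $F$, so the number of such edges is at most $k\bigl(n - (5k-3)\bigr) = kn - k(5k-3)$. These two estimates contradict one another.

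Hence $G$ contains no copy of $F_k$, and \Cref{thm:hom-or-Fk} (applicable since $k \ge 2$) yields that $G$ is homomorphic to $F_{k-1}$, completing the proof. I do not expect a genuine obstacle here: the whole point is the exact cancellation of the two bounds at $kn - k(5k-3)$, which is precisely why $\frac{k}{5k-3}$ is the natural threshold, and the strict minimum-degree hypothesis supplies exactly the slack needed to turn the coincidence into a contradiction. The only points requiring care are checking that \Cref{prop:Fk} genuinely applies (the minimum degree exceeds $n/5$, and we may pass to a maximal graph) and that the reduction to \Cref{thm:hom-or-Fk} is legitimate, which it is because being $F_k$-free is exactly that theorem's hypothesis.
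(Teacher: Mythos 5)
Your proposal is correct and follows essentially the same route as the paper: pass to a maximal \cfree{} supergraph, rule out a copy of $F_k$ by double counting the edges between $V(F)$ and $V(G)\setminus V(F)$ using \Cref{prop:Fk}, and then conclude via \Cref{thm:hom-or-Fk}. The only cosmetic difference is that you sum the degree lower bound over all $5k-3$ vertices of $F$, whereas the paper averages to exhibit a single vertex of $F$ of degree at most $\frac{kn}{5k-3}$ — the same computation.
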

    	
   	\begin{proof} 
		Note that we may assume that $G$ is maximal \cfree{}.  By
		\Cref{thm:hom-or-Fk}, if $G$ is not homomorphic to $F_{k - 1}$, it
		contains a copy $F$ of $F_k$.  The number of edges between $V(F)$ and
		$V(G) \setminus V(F)$ is at most $k(n - (5k - 3))$, since every vertex
		in $G$ has at most $k$ neighbours in $F$, by \Cref{prop:Fk}. It follows
		that there is a vertex $u$ in $F$ with at most $\frac{k n}{5k - 3} - k$
		neighbours outside of $F$.  Since $u$ has $k$ neighbours in $F$, it
		follows that $u$ has degree at most $\frac{kn}{5k - 3}$, a
		contradiction to the minimum degree condition.
    \end{proof}
  
\section{Homomorphism thresholds}\label{sec:hom-thresholds}

	Recall that, given a family of graphs $\MH$, the homomorphism threshold
	$\delta_{\hom}(\MH)$ of $\MH$ is the infimum of $d$ such that every
	$\MH$-free graph with $n$ vertices and minimum degree at least $dn$ is
	homomorphic to a bounded $\MH$-free graph.  In this section, we prove
	\Cref{thm:hom-threshold}, thereby determining the value of
	$\delta_{\hom}(\{C_3, C_5\})$. We also prove that $\delta_{\hom}(C_5) \le
	1/5$ by showing that $C_5$-free graphs of large enough minimum degree are
	also triangle-free.
   
    \begin{proof}[ of \Cref{thm:hom-threshold}]
		Denote $\delta = \delta_{\hom}(\{C_3, C_5\})$.  First, we show that
		$\delta \ge 1/5$.  We note that $F_k$ is not homomorphic to a \cfree{}
		graph $H$ with fewer than $|V(F_k)|$ vertices. Indeed, suppose
		otherwise. Then two vertices $x$ and $y$ in $F_k$ are mapped to the
		same vertex $u$ in $H$. By (\ref{itm:F-k-paths-between-vs}) there is a
		path $P$ of length $1$, $3$ or $5$ between $x$ and $y$. Clearly, $P$
		cannot have length $1$ (because the set of vertices mapped to the same
		vertex is independent).  It follows that $P$ has length $3$ or $5$.
		This implies that the path $P$ is mapped to a cycle of length $3$ or
		$5$, a contradiction.  It follows that, for each $k \ge 1$, $F_k$ is a
		\cfree{} graph with minimum degree at least $|V(F_k)| / 5$, which is
		not homomorphic to a \cfree{} graph on fewer than $|V(F_k)|$ vertices.
		Hence, indeed, $\delta \ge 1/5$. 

		It remains to show that $\delta \le 1/5$. Let $\eps > 0$ be fixed.
		Suppose that $G$ is a \cfree{} on $n$ vertices and minimum degree at
		least $(1/5 + \eps)n$. Let $k$ be such that $\frac{k}{5k - 3} < 1/5 +
		\eps$. Then, by \Cref{cor:hom-F-k-min-deg}, $G$ is homomorphic to $F_{k
		- 1}$. This shows that $\delta \le 1/5 + \eps$. Since $\eps$ was
		arbitrary, we conclude that $\delta \le 1/5$. 
    \end{proof}

	It would be interesting to determine the homomorphism threshold of
	$C_5$. The following lemma enables us to easily obtain an upper bound.

	\begin{lem} \label{lem:no-triangles}
		Let $G$ be a $C_5$-free graph on $n$ vertices and let $\gamma > 0$ with
		$\delta(G) \ge n/6 + \gamma n$. Then $G$ is triangle-free provided $n$
		is sufficiently large.
	\end{lem}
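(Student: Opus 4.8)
The plan is to argue by contradiction: suppose $G$ is $C_5$-free with $\delta(G) \ge n/6 + \gamma n$ but $G$ contains a triangle $T = xyz$. The guiding principle is that a triangle in a $C_5$-free graph forces strong structural constraints on how other vertices attach to $T$, and these constraints, combined with the minimum degree condition, should over-count edges. First I would classify the vertices of $G \setminus T$ by their neighbourhood in $T$: a vertex can be adjacent to $0$, $1$, $2$, or $3$ vertices of $T$. The key observation is that no vertex outside $T$ can have exactly two neighbours in $T$: if $u \sim x$ and $u \sim y$ but $u \not\sim z$, then any neighbour structure risks creating a $C_5$ — more carefully, one shows that the common neighbourhoods $N(x) \cap N(y)$, $N(y) \cap N(z)$, $N(x) \cap N(z)$ interact badly. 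Actually the cleaner route: let $A_x = N(x) \setminus (T \cup N(y) \cup N(z))$, and similarly $A_y, A_z$, i.e.\ the "private" neighbours, plus a set $B$ of vertices adjacent to at least two of $\{x,y,z\}$.

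Second, I would establish that $B$ is small or empty and that there are essentially no edges between $A_x$ and $A_y$ (and cyclically), because such an edge $uv$ with $u \sim x$, $v \sim y$ would give the $5$-cycle $x u v y z$ (using the triangle edge $zx$… wait, $x u v y z x$ has length $5$ exactly, so this is forbidden). Hence $A_x, A_y, A_z$ are pairwise non-adjacent. Moreover each $A_i$ together with $i$ and the rest: a vertex in $A_x$ has all but few of its neighbours inside $A_x \cup \{x\} \cup (\text{stuff not seeing } T)$, but since $A_x$ is independent-ish relative to the others, we get that $N(x), N(y), N(z)$ are nearly disjoint. Since $|N(x)|, |N(y)|, |N(z)| \ge n/6 + \gamma n$ each, near-disjointness gives $3(n/6 + \gamma n) - O(1) \le n$, i.e.\ $n/2 + 3\gamma n \le n + O(1)$, which is fine — so this alone does not suffice, and I need to push harder.

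The extra leverage comes from looking at a vertex $w \notin T$ with $N(w) \cap T = \emptyset$ (the "$A_0$" class), or more precisely from counting edges from $T$'s neighbourhood more carefully. The right move is: take a vertex $x$ of the triangle; its neighbourhood $N(x)$ is independent-free in a controlled way, and every vertex of $N(x)$ has at most one neighbour in $\{y\} \cup N(y)$ off... Hmm. Let me instead use the standard trick for $C_5$-free graphs: if $xyz$ is a triangle, consider $N(x)$. For any $u, u' \in N(x)$, if $u \sim u'$ we'd have a triangle $x u u'$ — allowed. But consider a path $u - a - u'$ with $a \notin N(x)$: then $x u a u' x$ is a $4$-cycle, allowed. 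The genuinely forbidden configuration is a path of length $3$ between two neighbours of $x$ avoiding $x$: $u a b u'$ with $u,u' \in N(x)$, $a,b \notin N(x) \cup \{x\}$, giving $C_5$ $x u a b u' x$. So $G[N(x) \cup \text{exterior}]$ has no such path, which forces $N(x)$ to be "clustered." I expect the clean statement is that $G - N(x)$ has at most one component meeting $N_2(x) := N(N(x)) \setminus (N(x) \cup \{x\})$ in an interesting way, or that $N[x] := \{x\} \cup N(x)$ dominates almost everything.

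**The main obstacle** will be making the domination/clustering argument quantitatively tight enough to beat $n/6$: I expect one needs to show that $|N[x] \cup N[y] \cup N[z]|$ is close to $n$ while simultaneously these three sets overlap only in $O(1)$ vertices (from the "no vertex has two neighbours in $T$" claim, suitably strengthened), giving roughly $3 \cdot (n/6 + \gamma n) \le n + O(1)$ — still not a contradiction — so the real point must be that a fourth chunk of vertices (those at distance exactly $2$ from all of $x,y,z$, say) must ALSO be accounted for and cannot fit, or that the minimum degree forces each $A_i$ to be large while the $C_5$-freeness forces $A_i$ to have small neighbourhood-closure, contradicting that vertices in $A_i$ have degree $\ge n/6 + \gamma n$. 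Concretely, a vertex $v \in A_x$ has $\ge n/6 + \gamma n$ neighbours, none in $A_y \cup A_z \cup \{y,z\}$; so $N(v) \subseteq A_x \cup \{x\} \cup B \cup A_0$ where $A_0 = V \setminus (T \cup A_x \cup A_y \cup A_z \cup B)$; iterating this with a vertex in $A_0$ seeing the triangle only at distance $2$ and applying the no-$C_5$ path-of-length-$3$ constraint should close the count. I would organise it as: (i) no vertex outside $T$ has two neighbours in $T$; (ii) no $A_x$–$A_y$ edges; (iii) therefore $N(x), N(y), N(z)$ partition-like, forcing $|A_0|$ large; (iv) but a vertex in $A_0 \cap N_2(x) \cap N_2(y)$ yields a $C_5$ using the triangle edge $xy$, so $A_0$-vertices are at distance $2$ from at most one of $x, y, z$; (v) final degree count on an $A_0$ vertex, whose neighbourhood must avoid $N(y) \cup N(z)$ or similar, gives $\delta(G) \le n/6 + O(1) < n/6 + \gamma n$ — the desired contradiction.
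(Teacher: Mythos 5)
Your proposal has a genuine gap, in two places. First, your key claim (i) --- that no vertex outside $T$ has two neighbours in $T$ --- is false: $C_5$-freeness does not forbid common neighbours of two triangle vertices. For instance, the join of the edge $xy$ with a large independent set (containing $z$ and many further vertices adjacent to both $x$ and $y$) is $C_5$-free, so the set $B$ you introduce can a priori be huge, and you give no argument for ``$B$ is small or empty''. Nothing in your sketch controls triangles elsewhere in the graph either, and these wreck the distinctness you need later: in your step (iv) the two $2$-paths from $w$ to $x$ and to $y$ may share their middle vertex (which happens exactly when that vertex lies in $B$), and then no $C_5$ arises. This is precisely where the paper spends most of its effort: it first proves that every vertex lies on at most $13$ triangular edges (via a claim that extracts, from a vertex $u$ with many triangular edges, a $5$-cycle of the form $(x_1 y u x_2 z)$), and then deletes one edge from every triangle other than $T$, losing only $O(1)$ of the minimum degree, so that all the disjointness arguments can be run in the modified graph.

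Second, even granting (i), your closing count (v) does not close. If $w \in A_0$ has no neighbour in $N(y)\cup N(z)$, its neighbourhood can still lie inside $A_0 \cup N(x)$, a set of size roughly $n/2 + n/6$, so nothing like $\delta(G)\le n/6+O(1)$ follows; you yourself note earlier that the crude union bound on $N(x),N(y),N(z)$ gives no contradiction, but the ``push harder'' step is never made precise and, as stated, is not a valid count. The count that actually works (the paper's) is of a different shape: with all triangles other than $T=x_1x_2x_3$ destroyed, the three sets $N(x_i)\setminus T$ are pairwise disjoint, and choosing one vertex $y_i$ in each, the three sets $N(y_i)\setminus\{x_i\}$ are pairwise disjoint and disjoint from the first three (any intersection yields a $C_5$ in $G$ or a second triangle in the modified graph); these six sets together with $T$ have total size exceeding $6\cdot\frac{n}{6}+3 > n$, the desired contradiction. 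Your outline never reaches a count of this strength, so the argument as proposed would not go through without substantial new ideas --- essentially the two missing ingredients above.
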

	
	We remark that with a little extra work we are able to prove the same
	result under the weaker condition that $\delta(G) > n/6 +1$, which is in
	fact tight whenever $12$ divides $n$; we omit the details for brevity.
	Before proving \Cref{lem:no-triangles}, we use it to prove
	\Cref{cor:hom-threshold-C-5}, which provides an upper bound on the
	homomorphism threshold $\delta_{\hom}(C_5)$. We currently do not have any
	nontrivial lower bound on $\delta_{\hom}(C_5)$.

	\begin{proof}[ of \Cref{cor:hom-threshold-C-5}]
		Suppose that $G$ is a $C_5$-free graph on $n$ vertices and minimum
		degree at least $(1/5 + \eps)n$ for some fixed $\eps > 0$.  Then, by
		\Cref{lem:no-triangles}, $G$ is also triangle-free.  It follows from
		\Cref{thm:hom-threshold} that $G$ is homomorphic to a $C_5$-free (and
		$C_3$-free) graph $H$ of order at most $C = C(\eps)$.  Hence, indeed,
		$\delta_{\hom}(C_5) \le 1/5$.  
	\end{proof}
		
	We now turn to the proof of \Cref{lem:no-triangles}.

	\begin{proof}[ of \Cref{lem:no-triangles}]
        
		We start by showing that every vertex in $G$ is incident with at most
		$13$ \emph{triangular edges} (i.e.~edges on triangles).  To see this,
		suppose that $u$ is incident with at least $14$ triangular edges.  In
		other words, the neighbourhood $N(u)$ of $u$ contains edges that span
		at least $14$ vertices. The following claim implies that there is a set
		$X$ of seven neighbours of $u$ such that every vertex in $X$ has a
		neighbour in $N(u) \setminus X$.

		\begin{claim} \label{claim:few-common-neighs}
			Let $H$ be a graph with $n$ vertices and no isolated vertices. Then
			there is a set $X$ of size at least $n / 2$ such that every vertex
			in $X$ has a neighbour outside of $X$.
		\end{claim}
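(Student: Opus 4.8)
The claim is exactly the classical theorem of Ore that a graph without isolated vertices has a dominating set of size at most $n/2$, dressed up via complementation. Indeed, writing $Y = V(H) \setminus X$, the requirement that every vertex of $X$ have a neighbour outside $X$ says precisely that every vertex of $V(H) \setminus Y$ has a neighbour in $Y$, i.e.\ that $Y$ is a dominating set of $H$; and $|X| \ge n/2$ is equivalent to $|Y| \le n/2$. So the plan is to produce a dominating set of $H$ of size at most $n/2$ and then let $X$ be its complement.

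First I would take $Y$ to be a dominating set of $H$ of \emph{minimum} size; in particular $Y$ is inclusion-minimal. The key step is to show that $X := V(H) \setminus Y$ is also a dominating set of $H$, that is, that every vertex $v \in Y$ has a neighbour in $X$. This is a standard private-neighbour argument: by minimality of $Y$, the set $Y \setminus \{v\}$ fails to dominate some vertex $w$. If $w = v$, then $v$ has no neighbour in $Y$, so since $v$ is not isolated it has a neighbour in $X$. Otherwise $w \notin Y$, so $w \in X$, and $w$ has no neighbour in $Y \setminus \{v\}$ but (being dominated by $Y$) does have a neighbour in $Y$; hence $v$ is that neighbour, and again $v$ has a neighbour in $X$. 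Either way every $v \in Y$ has a neighbour in $X$, so $X$ dominates $Y = V(H) \setminus X$ and is therefore a dominating set.

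Now minimality of $|Y|$ forces $|Y| \le |X|$, and since $|X| + |Y| = n$ this gives $|Y| \le n/2$ and $|X| \ge n/2$. Moreover $Y$ dominates $X$ by choice of $Y$, so every vertex of $X$ has a neighbour in $Y = V(H) \setminus X$, which is exactly what the claim asks for.

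There is no genuine obstacle here; the only point requiring care is the use of the no-isolated-vertices hypothesis, which enters precisely in the case $w = v$ above, and without which the claim is false (consider an edgeless graph). For completeness I might also mention the alternative one-paragraph proof that bypasses domination theory: pass to a spanning forest of $H$ (every component has at least two vertices since $H$ has no isolated vertex), properly $2$-colour each tree by the parity of distance from an arbitrary root, note that within each tree each colour class dominates the other, and conclude that one of the two global colour classes has size at most $n/2$ while its complement is the desired set $X$.
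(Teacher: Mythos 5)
Your proof is correct, and it takes a genuinely different route from the paper. You recast the statement as Ore's theorem on domination (a graph with no isolated vertices has a dominating set of size at most $n/2$) and prove it via the standard private-neighbour argument: take a minimum dominating set $Y$, show its complement $X$ also dominates (each $v \in Y$ either has no neighbour in $Y$, in which case non-isolation gives it a neighbour in $X$, or else $v$ is the unique $Y$-neighbour of some undominated-by-$Y\setminus\{v\}$ vertex $w \in X$), and conclude $|Y| \le |X|$ by minimality, so $|X| \ge n/2$; the domination of $X$ by $Y$ is exactly the required property. The paper instead argues by induction: reduce to the connected case, delete a non-cutvertex $u$ together with a neighbour $v$, apply induction to the components of the remainder (single-vertex components are handled by adjacency to $v$), and add $u$ back to the union of the sets obtained, getting size at least $(n-2)/2 + 1 = n/2$. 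Your argument buys a cleaner conceptual framing and the natural tightness example (the relation to domination number), while the paper's is a short self-contained induction needing no external notions; your spanning-forest two-colouring remark is also a valid third proof. All the case analysis in your argument checks out, including the correct and essential use of the no-isolated-vertices hypothesis in the case $w = v$.
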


		\begin{proof}
			We note that it suffices to prove the claim under the assumption
			that $H$ is connected. Indeed, for each component $H_i$ of $H$, we
			may pick a set $X_i$ as in the claim, and let $X$ be the union of
			the $X_i$'s. So now we assume that $H$ is connected. Because of
			the assumption that there are no isolated vertices, we may assume
			that $|V(H)| \ge 2$.

			Let $u$ be a vertex for which $H \setminus \{u\}$ is connected.
			Let $v$ be a neighbour of $u$. Consider the graph $H' = H \setminus
			\{u, v\}$. Let $H_1, \ldots, H_t$ be the connected components of
			$H'$. We pick a set $X_i$ for each $i \in [t]$ as follows: if $H_i$
			consists of a single vertex $x_i$, then $x_i$ must be adjacent to
			$v$, and we take $X_i = \{x_i\}$; otherwise, if $H_i$ has at least
			two vertices, then by induction there is a set $X_i$ of size at
			least $|V(H_i)|/2$ such that every vertex in $X_i$ has a neighbour
			outside of $X_i$ (but in $H_i$). Let $X=\bigcup_{i=1}^t X_i \cup
			\{u\}$. It is easy to check that $X$ satisfies the requirements of
			\Cref{claim:few-common-neighs}.
		\end{proof}
            
		Let $Y$ be a set of at most seven neighbours of $u$, which is disjoint
		from $X$ and satisfies that every vertex in $X$ has a neighbour in $Y$.
		Due to the minimum degree condition, if $n$ is sufficiently large, then
		we may find two distinct vertices $x_1$ and $x_2$ in $X$ that have a
		common neighbour $z$ outside of $X \cup Y \cup \{u\}$.  Let $y \in Y$
		be a neighbour of $x_1$. Then we find the $5$-cycle $(x_1 y u x_2 z)$,
		a contradiction. Thus, indeed, every vertex is incident with at most
		$13$ triangular edges.
		
		With this in mind, the proof of \Cref{lem:no-triangles} is nearly
		complete.  Indeed, suppose that $T = x_1x_2x_3$ is a triangle in $G$,
		and remove an edge from each triangle (except $T$) to form a new graph
		$G'$. The minimum degree only drops by at most $13$, so if $n$ is
		sufficiently large, we obtain $\delta(G') > n/6 + 1$.  Consider the
		neighbourhoods $N_1, N_2, N_3$ of $x_1, x_2, x_3$ outside of $T$. These
		sets are pairwise disjoint and independent in $G'$. Pick $y_i \in N_i$
		and consider $N(y_i)\setminus \{x_i\}$ for $i = 1, 2, 3$. These sets
		are pairwise disjoint (from the assumption that $G$ contains no
		$5$-cycle), and independent. Moreover, they are disjoint from $N_1 \cup
		N_2 \cup N_3$.  Letting $M$ be the union of these six sets and $T$, we
		have
        \[
           	|M| > 3\left(\frac{n}{6} -1\right) + 3\cdot\frac{n}{6} + 3 = n,
		\]
		a contradiction. Thus $G$ is triangle-free provided $n$ is sufficiently large, completing the proof.
	\end{proof}

\section{Final remarks}

	We are able to determine precisely the structure of \cfree{} graphs with
	high minimum degree, and thereby deduce the value of the homomorphism
	threshold $\delta_{\hom}(\{C_3, C_5\})$.  It would be very interesting to
	extend this result to $\{C_3, \ldots , C_{2\ell - 1}\}$-free graphs.
	Recall that, for integers $k\ge 2, \ell \ge 3$, $F_k^\ell$ is the graph
	obtained from a $((2\ell-1)(k-1)+2)$-cycle by adding all chords joining
	vertices at distances $j(2\ell-1)+1$ for $j=0, 1, \ldots , k-1$. In light
	of our \Cref{thm:hom-to-family} it is natural to ask whether or not a
	$\{C_3, C_5 \ldots , C_{2\ell -1}\}$-free graph on $n$ vertices  with
	minimum degree larger than $\frac{n}{2\ell-1}$ is homomorphic to $F_k^\ell$
	for some $k$. Rather surprisingly it turns out that this is false when
	$\ell \geq 4$ is even, as shown by the following construction due to Oliver
	Ebsen \cite{ebsen-schacht}. Suppose that $\ell \ge 4$ is even. Starting
	with a complete graph on four vertices, subdivide two independent edges by
	an additional $2\ell-6$ vertices and subdivide the remaining four edges by
	an additional two vertices each. Denote the resulting graph by $T_\ell$. It
	is easy to check that this graph is maximal $\{C_3, C_5 \ldots ,
	C_{2\ell-1}\}$-free.  To obtain large minimum degree assign weight $2$
	to each vertex of the original $K_4$ and to $\ell-4$ vertices of the
	`long' subdivided edges, and assign weight $1$ to the remaining
	vertices. This may be done in such a way that each vertex has weight
	$3$ in its neighbourhood (as $\ell$ is even). To obtain an unweighted
	graph of order $n$ simply blow up each vertex with an independent set
	of size proportional to its weight. Then the resulting graph
	$T^*_{\ell}$ is maximal $\{C_3, C_5 \ldots , C_{2\ell-1}\}$-free and
	$\delta(T^*_\ell) = \frac{3n}{6\ell-4} > \frac{n}{2\ell-1}$. However,
	it is not hard to show that $T_\ell$ is not homomorphic to $F_k^\ell$,
	for any $k$ (and therefore no blow-up of $T_\ell$ is homomorphic to any
	$F_k^\ell$).  We do not know whether \Cref{thm:hom-to-family} extends
	naturally to $\{C_3, C_5, \ldots, C_{2\ell-1}\}$-free graphs when $\ell
	\ge 5$ is odd, and it would be interesting to pursue this line of
	research further.
    
	Recall that the homomorphism threshold of a family of graphs $\MH$ is the
	infimum of $d$ satisfying that every $\MH$-free graph with $n$ vertices and
	minimum degree at least $dn$ is homomorphic to an $\MH$-free graph of
	bounded order (depending on $d$ but not on $n$). Despite the above remarks
	concerning the extension of \Cref{thm:hom-to-family} to general odd-girth
	graphs, we still make the following conjecture concerning the homomorphism
	threshold of $\{C_3, C_5, \ldots , C_{2\ell-1}\}$-free graphs for $\ell \ge 4$.
    
    \begin{conj} \label{conj:general-odd-girth}
    
		Let $\ell \ge 4$ be an integer. Then $\delta_{\hom}(\{C_3, C_5 \ldots ,
		C_{2\ell-1}\}) = \frac{1}{2\ell-1}$.
            
    \end{conj}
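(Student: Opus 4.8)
The lower bound $\delta_{\hom}(\{C_3, C_5, \ldots, C_{2\ell-1}\}) \ge \frac{1}{2\ell-1}$ should come out exactly as in the proof of \Cref{thm:hom-threshold}. The generalised Andr\'asfai graphs $F_k^\ell$ are $\{C_3, \ldots, C_{2\ell-1}\}$-free and $k$-regular on $(2\ell-1)(k-1)+2$ vertices, so their minimum-degree ratio tends to $\frac{1}{2\ell-1}$ as $k \to \infty$; it therefore suffices to check that $F_k^\ell$ is not homomorphic to any $\{C_3, \ldots, C_{2\ell-1}\}$-free graph on fewer than $|V(F_k^\ell)|$ vertices. For this I would first prove the analogue of \Cref{prop:Fk}(\ref{itm:F-k-paths-between-vs}), namely that any two vertices of $F_k^\ell$ are joined by a path of odd length at most $2\ell-1$ (which follows from the cycle-plus-chords description by the same induction used for \Cref{prop:Fk}), and then observe that identifying two vertices along such a path forces a closed odd walk, hence an odd cycle, of length at most $2\ell-1$ in the image.

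The substantive part is the matching upper bound $\delta_{\hom} \le \frac{1}{2\ell-1}$. The natural first attempt is to rerun the machinery of this paper with $7$-cycles replaced by $(2\ell+1)$-cycles: for maximal $\{C_3, \ldots, C_{2\ell-1}\}$-free graphs $G$ with $\delta(G)$ slightly above $\frac{n}{2\ell-1}$, establish the analogues of \Cref{lem:induced-cycle} (no induced $2\ell$-cycle), \Cref{lem:cycle-chords} (no ``partial'' generalised M\"obius configuration), and \Cref{lem:two-cycles} (no two $(2\ell+1)$-cycles meeting in a long path), each via the same double-counting engine --- exhibit an induced subgraph $H$ with $|V(H)| \ge 2\ell-1$ in which every vertex of $G$ has fewer than $|V(H)|/(2\ell-1)$ neighbours --- then deduce, as in \Cref{thm:vx-cycle}, that every vertex has a neighbour in every $(2\ell+1)$-cycle, and finally try to build a blow-up of $F_{k-1}^\ell$ as in \Cref{thm:hom-or-Fk}.

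The main obstacle is that this last deduction genuinely fails for even $\ell \ge 4$: Ebsen's graph $T_\ell^*$ from the final section is maximal $\{C_3, \ldots, C_{2\ell-1}\}$-free with $\delta > \frac{n}{2\ell-1}$ yet is not homomorphic to any $F_k^\ell$, so ``every vertex meets every $(2\ell+1)$-cycle'' cannot pin $G$ down to the rigid $F_k^\ell$ structure. To prove the conjecture one therefore needs a softer target: rather than forcing $G$ onto a blow-up of a single $F_{k-1}^\ell$, show that $V(G)$ partitions into boundedly many independent classes determined by the ``trace'' of each vertex's neighbourhood on a fixed large $(2\ell+1)$-cycle (or on a fixed $F_k^\ell$), in the spirit of the classes $Y_i$ in the proof of \Cref{thm:hom-or-Fk}, and that the resulting quotient is $\{C_3, \ldots, C_{2\ell-1}\}$-free and of bounded order. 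The crux --- and the reason this remains a conjecture --- is the treatment of vertices having \emph{no} neighbour in the reference cycle: for $\ell \ge 4$ such vertices cannot be excluded, so instead one must show they are ``captured'' by some bounded local structure. Here I would bring in neighbourhood-complex / bounded-complexity arguments in the spirit of \L{}uczak and Thomass\'e and of Oberkampf and Schacht for chromatic and homomorphism thresholds, exploiting that the $\eps$-slack in the degree makes the relevant set systems of bounded VC-type, and iterating the reference-cycle argument if necessary. Making this bounded-complexity step precise for all $\ell$ --- and in particular for odd $\ell \ge 5$, where even the correct bounded target graph is not known --- is where the real difficulty lies.
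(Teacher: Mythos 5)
The statement you are addressing is \Cref{conj:general-odd-girth}, which the paper itself does not prove --- it is posed as a conjecture, and the paper only records (in the ``Note added in the proof'') that Ebsen and Schacht subsequently resolved it by a separate argument. Measured against that, your proposal contains a genuine gap: it establishes (in outline) only the lower bound. Your sketch of $\delta_{\hom}(\{C_3,\ldots,C_{2\ell-1}\}) \ge \frac{1}{2\ell-1}$ is sound and is exactly the paper's own argument for the lower bound in \Cref{thm:hom-threshold}, transported to $F_k^\ell$: one needs the analogue of \Cref{prop:Fk}(\ref{itm:F-k-paths-between-vs}), i.e.\ that any two vertices of $F_k^\ell$ lie on a common $(2\ell+1)$-cycle and hence are joined by a path of odd length at most $2\ell-1$, so that identifying two vertices under a homomorphism creates a short odd cycle in the image. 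That part is fine, though you should actually carry out the induction rather than assert it.

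The upper bound $\delta_{\hom} \le \frac{1}{2\ell-1}$, which is the substantive half of the statement, is not proved in your proposal and cannot be obtained by the route you describe. You correctly observe that the paper's pipeline (no induced $2\ell$-cycle, a \Cref{lem:cycle-chords}-type rigidity lemma, ``every vertex meets every $(2\ell+1)$-cycle'', then the blow-up argument of \Cref{thm:hom-or-Fk}) breaks down for even $\ell \ge 4$ because of Ebsen's construction $T_\ell^*$, which is maximal $\{C_3,\ldots,C_{2\ell-1}\}$-free with minimum degree above $\frac{n}{2\ell-1}$ yet homomorphic to no $F_k^\ell$; so any proof must aim at a larger, less rigid family of bounded target graphs. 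But your proposed replacement --- partitioning $V(G)$ by neighbourhood traces on a reference cycle and invoking ``bounded VC-type complexity'' in the spirit of {\L}uczak--Thomass\'e and Oberkampf--Schacht --- is a heuristic, not an argument: you do not specify the target family, you do not show the quotient is $\{C_3,\ldots,C_{2\ell-1}\}$-free or of bounded order, and you explicitly concede that the treatment of vertices with no neighbour in the reference cycle (precisely the point where the $\ell=3$ proof uses \Cref{thm:vx-cycle}) is unresolved. As it stands the proposal is a research plan identifying the correct obstacles, not a proof of the conjecture; the actual resolution is the later work of Ebsen and Schacht cited in the paper, which proceeds along different lines.
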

    
    We have also obtained an upper bound on $\delta_{\hom}(C_5)$, namely, that
    it is at most $1/5$. We ask if it is true that $1/5$ is the correct value.
	\begin{qn} \label{qn:delta-5}
        Is it true that $\delta_{\hom}(C_5) = 1/5$?
    \end{qn}
    
	In fact, any nonzero lower bound on $\delta_{\hom}(C_5)$ would be
	interesting. In order to obtain such a lower bound, one would have to find,
	in particular, a family of graphs that have large minimum degree, are
	$C_5$-free and are not $4$-colourable (indeed, otherwise, the graphs are
	homomorphic to $K_4$, which is clearly $C_5$-free). Although it is well
	known that such graphs exist, it seems hard to find explicit examples,
	especially with the added condition that they are not homomorphic to
	$C_5$-free graphs of bounded order.

	\paragraph{Note added in the proof.} 
		After the paper was submitted Ebsen and Schacht
		\cite{ebsen-schacht-new} proved that the homomorphism threshold of
		$\{C_3, \ldots, C_{2\ell-1}\}$ is $\frac{1}{2\ell-1}$, thereby proving
		\Cref{conj:general-odd-girth}. We note that \Cref{qn:delta-5} is still
		left unanswered. Furthermore, in light of this new result, we pose the
		following question which suggests a strengthening of
		\Cref{conj:general-odd-girth}.
		\begin{qn}
			Let $\ell \ge 4$ be integer and let $\eps > 0$, and suppose $G$ is a
			$\{C_3, \ldots, C_{2\ell-1}\}$-free graph on $n$ vertices with
			minimum degree at least $(\frac{1}{2\ell - 1} + \eps)n$. Is it true
			that $G$ is $3$-colourable?
		\end{qn}
	 
	\subsection*{Acknowledgments}

		This research was initially conducted at the University of Cambridge. The
		second author is grateful to the Department of Mathematical Sciences at the
		University of Memphis, and especially to B{\'e}la Bollob{\'a}s, for making
		that possible, and for the hospitality of the Combinatorics Group at
		Cambridge.  We should also like to thank Julian Sahasrabudhe for
		suggesting the problem, and B{\'e}la Bollob{\'a}s, Tomasz {\L}uczak,
		and Mathias Schacht for very helpful comments. Finally, we would 
		like to thank the anonymous referees for their careful reading and helpful
		suggestions.

    \bibliography{hombib}

\providecommand{\bysame}{\leavevmode\hbox to3em{\hrulefill}\thinspace}
\providecommand{\MR}{\relax\ifhmode\unskip\space\fi MR }
\providecommand{\MRhref}[2]{%
  \href{http://www.ams.org/mathscinet-getitem?mr=#1}{#2}
}
\providecommand{\href}[2]{#2}
\begin{thebibliography}{10}

\bibitem{allen-et-al}
P.~Allen, J.~B{\"o}ttcher, S.~Griffiths, Y.~Kohayakawa, and R.~Morris,
  \emph{The chromatic thresholds of graphs}, Adv. Math. \textbf{235} (2013),
  261--295.

\bibitem{andrasfai}
B.~Andr{\'a}sfai, \emph{\"{U}ber ein {E}xtremalproblem der {G}raphentheorie},
  Acta Math. Acad. Sci. Hungar. \textbf{13} (1962), 443--455.

\bibitem{andrasfai-2}
\bysame, \emph{Graphentheoretische extremalprobleme}, Acta Math.\ Acad.\ Sci.\
  Hungar. \textbf{15} (1964), no.~3, 413--438.

\bibitem{andrasfai-erdos-sos}
B.~Andr{\'a}sfai, P.~Erd{\H{o}}s, and V.~T. S{\'o}s, \emph{On the connection
  between chromatic number, maximal clique and minimal degree of a graph},
  Discrete Math. \textbf{8} (1974), 205--218.

\bibitem{brandt-ribe}
S.~Brandt and E.~Ribe-Baumann, \emph{{European Conference on Combinatorics,
  Graph Theory and Applications (EuroComb 2009) Graphs of odd girth $7$ with
  large degree}}, Electron. Notes Discr. Math. \textbf{34} (2009), 89--93.

\bibitem{brandt-thomasse}
S.~Brandt and S.~Thomass\'e, \emph{{Dense triangle-free graphs are four
  colorable: a solution to the Erd\H{o}s-Simonovits problem}}, To appear, 19
  pp.

\bibitem{chen-jin-koh}
C.~C. Chen, G.~P. Jin, and K.~M. Koh, \emph{Triangle-free graphs with large
  degree}, Combin. Probab. Comput. \textbf{6} (1997), no.~4, 381--396.

\bibitem{ebsen-schacht-new}
O.~Ebsen and M.~Schacht, \emph{Homomorphism thresholds for odd cycles},
  Submitted, arXiv:1712.07026, 18 pp.

\bibitem{ebsen-schacht}
\bysame, \emph{{personal communication}}.

\bibitem{erdos}
P.~Erd{\H{o}}s, \emph{Remarks on a theorem of {R}amsay}, Bull. Res. Council
  Israel. Sect. F \textbf{7F} (1957/1958), 21--24.

\bibitem{erdos-simonovits}
P.~Erd{\H{o}}s and M.~Simonovits, \emph{On a valence problem in extremal graph
  theory}, Discrete Math. \textbf{5} (1973), 323--334.

\bibitem{goddard-lyle}
W.~Goddard and J.~Lyle, \emph{Dense graphs with small clique number}, J. Graph
  Theory \textbf{66} (2011), no.~4, 319--331.

\bibitem{haggkvist}
R.~H{\"a}ggkvist, \emph{Odd cycles of specified length in nonbipartite graphs},
  Graph theory ({C}ambridge, 1981), North-Holland Math. Stud., vol.~62,
  North-Holland, Amsterdam-New York, 1982, pp.~89--99.

\bibitem{jin1}
G.~Jin, \emph{Triangle-free graphs with high minimal degrees}, Combin. Probab.
  Comput. \textbf{2} (1993), no.~4, 479--490.

\bibitem{jin}
\bysame, \emph{Triangle-free four-chromatic graphs}, Discrete Math.
  \textbf{145} (1995), no.~1-3, 151--170.

\bibitem{luczak}
T.~{\L}uczak, \emph{On the structure of triangle-free graphs of large minimum
  degree}, Combinatorica \textbf{26} (2006), no.~4, 489--493.

\bibitem{luczak-thomasse}
T.~{\L}uczak and S.~Thomass\'e, \emph{{Coloring dense graphs via
  VC-dimension}}, Submitted, arXiv:1007.1670, 18 pp.

\bibitem{lyle}
J.~Lyle, \emph{{On the chromatic number of $H$-free graphs of large minimum
  degree}}, Graphs and Combinatorics \textbf{27} (2011), no.~5, 741--754.

\bibitem{messuti-schacht}
S.~Messuti and M.~Schacht, \emph{On the structure of graphs with given odd
  girth and large minimum degree}, J. Graph Theory \textbf{80} (2015), no.~1,
  69--81.

\bibitem{nikiforov}
V.~Nikiforov, \emph{Chromatic number and minimum degree of {$K_r$}-free
  graphs}, Preprint, arXiv:1001.2070, 13 pp.

\bibitem{oberkampf-schacht}
H.~Oberkampf and M.~Schacht, \emph{On the structure of dense graphs with fixed
  clique number}, Preprint, arXiv:1602.02302, 11 pp.

\bibitem{thomassen}
C.~Thomassen, \emph{On the chromatic number of triangle-free graphs of large
  minimum degree}, Combinatorica \textbf{22} (2002), no.~4, 591--596.

\end{thebibliography}
    \bibliographystyle{amsplain}
\end{document}